\newcommand{\kk}{\mathbb{k}}
\newcommand{\KK}{\mathbb{K}}
\newcommand{\GG}{\mathcal{G}}
\newcommand{\bgg}{\mathbb{g}}
\newcommand{\bGG}{\mathbb{G}}
\newcommand{\bmm}{\mathbb{M}}
\newcommand{\NN}{\normalfont\mathbb{N}}
\newcommand{\ZZ}{\normalfont\mathbb{Z}}
\newcommand{\MM}{\normalfont\mathcal{M}}
\newcommand{\PP}{\normalfont\mathbb{P}}
\newcommand{\xx}{\normalfont\mathbf{x}}
\newcommand{\yy}{\normalfont\mathbf{y}}
\newcommand{\mm}{{\normalfont\mathfrak{m}}}
\newcommand{\pp}{{\normalfont\mathfrak{p}}}
\newcommand{\qqq}{\mathfrak{q}}
\newcommand{\nn}{{\normalfont\mathfrak{N}}}
\newcommand{\nnn}{{\normalfont\mathfrak{n}}}
\newcommand{\rank}{\normalfont\text{rank}}
\newcommand{\depth}{\normalfont\text{depth}}
\newcommand{\grade}{\normalfont\text{grade}}
\newcommand{\Tor}{\normalfont\text{Tor}}
\newcommand{\Quot}{\normalfont\text{Quot}}
\newcommand{\HT}{\normalfont\text{ht}}
\newcommand{\Ann}{\normalfont\text{Ann}}
\newcommand{\Supp}{\normalfont\text{Supp}}
\newcommand{\Rees}{\mathcal{R}}
\newcommand{\Hom}{\normalfont\text{Hom}}
\newcommand{\Fitt}{\normalfont\text{Fitt}}
\newcommand{\OO}{\mathcal{O}}
\newcommand{\I}{\mathcal{I}}
\newcommand{\FF}{\normalfont\mathcal{F}}
\newcommand{\HL}{\normalfont\text{H}_{\mm}}
\newcommand{\HH}{\normalfont\text{H}}
\newcommand{\gr}{\normalfont\text{gr}}
\newcommand{\AAA}{\mathfrak{A}}
\newcommand{\bideg}{\normalfont\text{bideg}}
\newcommand{\Proj}{\normalfont\text{Proj}}
\newcommand{\Spec}{\normalfont\text{Spec}}
\newcommand{\MaxSpec}{\normalfont\text{MaxSpec}}
\newcommand{\multProj}{\normalfont\text{MultiProj}}
\newcommand{\biProj}{{\normalfont\text{BiProj}}}
\newcommand{\Rat}{\mathfrak{R}}
\newcommand{\fib}{\mathfrak{F}}
\newcommand{\sfib}{\widetilde{\mathfrak{F}_R(I)}}
\newtheorem{theorem}{Theorem}[section]
\newaliascnt{headcor}{headthm}
\newaliascnt{headconj}{headthm}
\newaliascnt{corollary}{theorem}
\newtheorem{corollary}[corollary]{Corollary}
\newaliascnt{lemma}{theorem}
\newtheorem{lemma}[lemma]{Lemma}
\newaliascnt{conjecture}{theorem}
\newaliascnt{proposition}{theorem}
\newtheorem{proposition}[proposition]{Proposition}
\theoremstyle{definition}
\newaliascnt{definition}{theorem}
\newtheorem{definition}[definition]{Definition}
\newaliascnt{notation}{theorem}
\newtheorem{notation}[notation]{Notation}
\newaliascnt{example}{theorem}
\newtheorem{example}[example]{Example}
\newaliascnt{examples}{theorem}
\newaliascnt{remark}{theorem}
\newtheorem{remark}[remark]{Remark}
\newaliascnt{problem}{theorem}
\newaliascnt{construction}{theorem}
\newaliascnt{setup}{theorem}
\newtheorem{setup}[setup]{Setup}
\newaliascnt{algorithm}{theorem}
\newaliascnt{observation}{theorem}
\newaliascnt{defprop}{theorem}
\newtheorem{defprop}[defprop]{Definition-Proposition}
\def\equationautorefname~#1\null{(#1)\null}
\def\sectionautorefname~#1\null{Section #1\null}
\def\subsectionautorefname~#1\null{\S #1\null}
\def\trdeg{{\rm trdeg}}
\def\surjects{\twoheadrightarrow}
\begin{document}

\title{Degree of rational maps and specialization}

\author{Yairon Cid-Ruiz}
\address[Cid-Ruiz]{Max Planck Institute for Mathematics in the Sciences, Inselstra\ss e 22, 04103 Leipzig, Germany.}
\email{cidruiz@mis.mpg.de}
\urladdr{https://ycid.github.io}

\author{Aron Simis}

\address[Simis]{Dipartimento di Scienze Matematiche del Politecnico di Torino,
	C.so Duca degli Abruzzi 24, 
	10129 Torino, Italy.}
\email{aron.simis@polito.it}
\address[{Simis [current address]}]{Departamento de Matem\'atica, CCEN, 
	Universidade Federal de Pernambuco, 
	50740-560 Recife, PE, Brazil.}
\email{aron@dmat.ufpe.br}

\begin{abstract}
One considers the behavior of the degree of a rational map under specialization of the coefficients of the defining linear system. 
The method rests on the classical idea of Kronecker as applied to the context of projective schemes and their specializations. 
For the theory to work one is led to develop the details of rational maps and their graphs when the ground ring of coefficients is a Noetherian domain.
\end{abstract}

\subjclass[2010]{Primary: 14E05, Secondary: 13D02, 13D45, 13P99.}

\keywords{rational maps, specialization, syzygies, Rees algebra, associated graded ring, fiber cone, saturated fiber cone}

\maketitle



\section{Introduction}

The overall goal of this paper is to obtain bounds for the degree of a rational map in terms of the main features of its base ideal (i.e., the ideal generated by a linear system defining the map). 
In order that this objective stays within a reasonable limitation, one focuses on rational maps whose source and target are projective varieties. 
Although there is some recent progress in the multi-projective environment (see \cite{EFFECTIVE_BIGRAD} and \cite{MULTPROJ}), it is the present authors' believe that a thorough examination of the projective case is a definite priority.

Now, to become more precise one should rather talk about projective {\em schemes} as source and target of the envisaged rational maps. 
The  commonly sought interest is the case of projective schemes over a field (typically, but not necessarily, algebraically closed). 
After all, this is what core classical projective geometry is all about. 
Alas, even this classical setup makes it hard to look at the degree of a rational map since one has no solid grip on any general theory that commutative algebra lends, other than the rough skeleton of field extension degree theory.

One tactic that has often worked is to go  all the way up to a generic case and then find sufficient conditions for the specialization to keep some of the main features of the former. The procedure depends on  taking a dramatic number of variables to allow modifying the given data into a generic shape.
The method is seemingly due to Kronecker and was quite successful in the hands of Hurwitz (\cite{Hurwitz}) in establishing a new elegant theory of elimination and resultants.

\smallskip

Of a more recent crop, one has, e.g., \cite{Residual_int}, \cite{Generic_residual_int}, \cite{Ulrich_RedNo},  \cite{ram1}.

\smallskip

In a related way, one has the notion of when an ideal specializes modulo a regular sequence:  given an ideal $I\subset R$ in a ring, one says that  $I$ specializes with respect to a sequence of elements $\{a_1,\ldots, a_n\}\subset R$ if the latter is a regular sequence both on $R$ and on $R/I$.
A tall question in this regard is to find conditions under which the defining ideal of some well-known rings -- such as the Rees ring or the associated graded ring of an ideal  (see, e.g., \cite{EISENBUD_HUNEKE_SPECIALIZATION}, \cite{SIMIS_ULRICH_SPECIALIZATION}) -- specialize with respect to a given sequence of elements.
Often, at best one can only describe some obstructions to this sort of procedure, normally in terms of the kernel of the specialization map.

\smallskip

The core of the paper can be said to lie in between these two variations of specialization as applied to the situation of rational maps between projective schemes and their related ideal-theoretic objects. 

It so happens that at the level of the generic situation the coefficients live in a polynomial ring $A$ over a field, not anymore in a field.
This entails the need to consider rational maps defined by linear systems over the ring $A$, that is, rational maps with source $\PP_A^r$ or, a bit more generally,  an integral closed subscheme of $\PP_A^r$.
Much to our surprise, a complete such theory, with all the required details that include the ideal-theoretic transcription, is not easily available.
For this reason, the first part of the paper deals with such details with an eye for the  ideal-theoretic behavior concealed in or related to the geometric facts. 
Experts may feel that the transcription to a ground ring situation is the expected one, as indicated in several sources and, by and large, this feeling is correct. However, making sure that everything works well and give full rigorous proofs is a different matter altogether.

A tall order in these considerations will be a so-called {\em relative fiber cone} that mimics the notion of a fiber cone in the classical environment over a field -- this terminology is slightly misleading as the notion is introduced in algebraic language, associated to the concept of a Rees algebra rather than to the geometric version (blowup); however, one will draw on both the algebraic and the geometric versions. 
As in the case of a ground field, here too this fiber cone plays the role of the (relative) cone over the image of the rational map in question, and this fact will be called upon in many  theorems.

Another concept dealt with is the {\em saturated fiber cone}, an object perhaps better understood in terms of global sections of a suitable sheaf of rings.
The concept has been introduced in \cite{MULTPROJ} in the coefficient field environment and is presently extended to the case when the coefficients belong to a Noetherian domain  of finite Krull dimension.
It contains the relative fiber cone as a subalgebra and plays a role in rational maps, mainly as an obstruction to birationality in terms of this containment.
Also, its multiplicity is equal to the product of the degree of a rational map and the degree of the corresponding image.

With the introduction of these considerations, one will be equipped to tackle the problem of specializing rational maps, which is the main objective of this paper.
An important and (probably) expected application is that under a suitable general specialization of the coefficients, one shows that the degrees of the rational map and the corresponding image remain constant.

\smallskip

Next is a summary of the contents in each section.

Throughout the ground coefficient ring is a Noetherian domain $A$ of finite Krull dimension.

\smallskip

In \autoref{sec:notations}, one fixes the basic algebraic terminology and notation. This is a short section which may be skipped by those familiar with the current concepts concerning the Rees algebra (blowup algebra) and its associates.

\smallskip

In \autoref{sec:rat_maps}, the basics of rational maps between irreducible projective varieties over $A$ are developed. 
Particular emphasis is set towards a description of geometric concepts in terms of the algebraic analogs. 
For instance, and as expected, the image and the graph of a rational map are described in terms of the fiber cone and the Rees algebra, respectively. 
In the last part of this section, the notion of a saturated fiber cone is introduced and studied in the relative environment over $A$.

\smallskip

\autoref{sec:alg_tools} is devoted to a few additional algebraic tools of a more specific nature. With a view towards the specialization of determinantal-like schemes one reviews a few properties of generic determinantal rings. 
The main tool in this section goes in a different direction, focusing on upper bounds for the dimension of certain graded parts of  local cohomology modules of a finitely generated module over a bigraded algebra.

\smallskip

The core of the paper is  \autoref{sec:specialization}.
Here one assumes that the ground ring is a polynomial ring $A:=\kk[z_1,\ldots,z_m]$  over a field $\kk$ and specializes these variables to elements of $\kk$. Thus, one considers  a maximal ideal of the form $\nnn:=\left(z_1-\alpha_1,\ldots,z_m-\alpha_m\right)$.  
Since clearly $\kk \simeq A/\nnn$, the $A$-module structure of $\kk$ is given via the homomorphism $A \twoheadrightarrow A/\nnn \simeq \kk$.
One takes a standard graded polynomial ring $R:=A[x_0,\ldots,x_r]$ ($[R]_0=A$) and a tuple of forms $\{g_0,\ldots,g_s\} \subset R$ of the same positive degree. Let $\{\overline{g_0},\ldots,\overline{g_s}\} \subset R/\nnn R$ denote the corresponding tuple of forms in $R/\nnn R\simeq \kk[x_0,\ldots,x_r]$ where $\overline{g_i}$ is the image of $g_i$ under the natural homomorphism $R \twoheadrightarrow R/\nnn R$.

Consider the rational maps 
$$
\GG: \PP_A^r \dashrightarrow \PP_A^s \quad  \text{ and } \quad \bgg: \PP_\kk^r \dashrightarrow \PP_\kk^s
$$ 
determined by the tuples of forms $\{g_0,\ldots,g_s\}$ and $\{\overline{g_0},\ldots,\overline{g_s}\}$, respectively.
 
The main target is finding conditions under which the degree $\deg(\bgg)$ of $\bgg$ can be bounded above or below by the degree $\deg(\GG)$ of $\GG$.
The main result in this line is \autoref{THM_REDUCTION_REES}. 
In addition, set $\I:=(g_0,\ldots,g_s)\subset R$  and $I:=(\overline{g_0},\ldots,\overline{g_s})\subset R/\nnn R$.
Let $\mathbb{E}(\I)$ be the exceptional divisor of the blow-up of $\PP_A^r$ along $\I$.
A bit surprisingly, having a grip on the  dimension of the scheme $\mathbb{E}(\I) \times_{A} \kk$ is the main condition to determine whether $\deg(\bgg) \le \deg(\GG)$ or $\deg(\bgg) \ge \deg(\GG)$.
In order to control $\dim\left(\mathbb{E}(\I) \times_A \kk\right)$ one can impose some constraints on the analytic spread of $\I$ localized at certain primes, mimicking an idea in \cite{EISENBUD_HUNEKE_SPECIALIZATION} and \cite{SIMIS_ULRICH_SPECIALIZATION}. 
Also, under suitable general conditions, one shows that it is always the case that $\deg(\bgg) = \deg(\GG)$.

An additional interest in this section is the specialization of the saturated fiber cone  and the fiber cone of $\I$.
By letting the specialization be suitably general, it is proved in \autoref{specializing_mul_fiber}  and \autoref{cor:specialization_image} that the multiplicities of the saturated fiber cone and the fiber cone of $I$ are equal to the ones of the saturated fiber cone and the fiber cone of $\I \otimes_{A} \Quot(A)$, respectively, where $\Quot(A)$ denotes the field of fractions of $A$.

\smallskip 

\autoref{sec:applications} gives some applications to certain privileged objects.
Firstly, one considers the usual duet: equigenerated codimension $2$ perfect ideals and equigenerated codimension $3$ Gorenstein ideals over a standard graded polynomial ring over a field. 
Here, the focus is on the relation between the degree of the rational map and the standard degrees of a minimal set of generating syzygies, a subject much in vogue these days (see, e.g.,~\cite{AB_INITIO, Simis_cremona,KPU_blowup_fibers,EISENBUD_ULRICH_ROW_IDEALS,Hassanzadeh_Simis_Cremona_Sat,SIMIS_RUSSO_BIRAT,EFFECTIVE_BIGRAD,SIMIS_PAN_JONQUIERES,HASSANZADEH_SIMIS_DEGREES, HULEK_KATZ_SCHREYER_SYZ, MULTPROJ}).
While this relation is sufficiently available in the respective generic cases, not so much in a typical concrete family of such schemes.
This is where the main results on specialization come in to obtain a sharp relation between the two sorts of degrees mentioned.

At the end of the section one shows that the so-called $j$-multiplicity -- also very much in vogue (see e.g.~\cite{JMULT_MONOMIAL, JEFFRIES_MONTANO_VARBARO, COMPUTING_J_MULT, POLINI_XIE_J_MULT}) -- is stable under specialization.

\smallskip

For the more impatient reader, here is a pointer to the main results:  \autoref{thm:dim_cohom_bigrad},  \autoref{prop:minimal_primes_tensor_Rees},  \autoref{prop:find_open_set}, \autoref{THM_REDUCTION_REES},  \autoref{specializing_mul_fiber}, \autoref{cor:specialization_image}, \autoref{specializing_degree_HB}, \autoref{thm:Gor_ht_3} and \autoref{cor:j_mult}.

\section{Terminology and notation}
\label{sec:notations}

Let $R$ be a Noetherian ring and $I \subset R$ be an ideal.

\begin{definition}\label{Fitting_conditions}\rm
Let $m\geq 0$ be an integer (one allows $m=\infty$).
\begin{enumerate}
	\item [(G)] $I$ satisfies the {\em condition} $G_m$ if 
	$\mu(I_\mathfrak{p}) \le \HT(\pp)$ for all $\pp \in V(I) \subset \Spec(R)$ such that $\HT(\pp) \le m-1$.
	 \item [(F)] In addition, suppose that $I$ has a regular element.
	$I$ satisfies the {\em condition} $F_m$ if 
	$\mu(I_\mathfrak{p}) \le \HT(\pp) +1 - m$ for all $\pp\in \Spec(R)$ such that $I_{\pp}$ is not principal. Provided $I$ is further assumed to be principal locally in codimension at most $m-1$, the condition is equivalent to requiring that $\mu(I_\mathfrak{p}) \le \HT(\pp) +1 - m$ for all $\pp\in \Spec(R)$ containing $I$ such that $\HT(\pp)\geq m$.
\end{enumerate}
\end{definition}
In terms of Fitting ideals, $I$ satisfies $G_m$ if and only if $\HT(\Fitt_i(I)) > i$ for all $i<m$, whereas $I$ satisfies $F_m$ if and only if $\HT(\Fitt_{i}(I)) \geq m+i$ for all $i\geq 1$.
These conditions were originally introduced in \cite[Section 2, Definition]{AN} and \cite[Lemma 8.2, Remark 8.3]{HSV_TRENTO_SCHOOL}, respectively.
Both conditions are more interesting when the cardinality of a global set of generators of $I$ is large and $m$ stays low. Thus, $F_m$ is typically considered for $m=0,1$, while $G_m$ gets its way when $m\leq \dim R$.

\begin{definition}\rm
	The {\em Rees algebra}  of $I$ is defined as the $R$-subalgebra 
	$$
	\Rees_R(I):=R[It] = \bigoplus_{n\ge 0} I^nt^n  \subset R[t],
	$$
	and the {\em associated graded ring}  of $I$ is given by 
	$$
	\gr_I(R):=\Rees_R(I)/I \Rees_R(I) \simeq \bigoplus_{n\ge 0} I^n/I^{n+1}.
	$$
	If, moreover, $R$ is local, with maximal ideal $\mm$, one defines the {\em fiber cone} of $I$ to be 
		$$
	\fib_R(I): = \Rees_R(I)/\mm\Rees_R(I) \simeq \gr_I(R)/\mm\gr_I(R),
	$$
	and the {\em analytic spread} of $I$, denoted by $\ell(I)$, to be the (Krull) dimension of $\fib_R(I)$.
\end{definition}

The following notation will prevail throughout most of the paper.

\begin{notation}\rm
	\label{NOTA:prelim_Spec_A}
	Let $A$ be a Noetherian ring of finite Krull dimension.
	Let $(R,\mm)$ denote a standard graded algebra over a $A=[R]_0$ and $\mm$ be its graded irrelevant ideal $\mm=([R]_1)$. 
	Let $S := A[y_0,\ldots,y_s]$ denote a standard graded polynomial ring over $A$.	
\end{notation}

Let $I \subset R$ be a homogeneous ideal generated by $s+1$ polynomials $\{f_0,\ldots,f_s\} \subset R$ of the same degree $d>0$ -- in particular, $I=\left([I]_d\right)$.
Consider the bigraded $A$-algebra 
$$
\AAA:=R \otimes_A S=R[y_0,\ldots,y_s],
$$ 
where $\bideg([R]_1)=(1,0)$ and $\bideg(y_i)=(0,1)$.
By setting $\bideg(t)=(-d, 1)$, then $\Rees_R(I)=R[It]$ inherits a bigraded structure over $A$.
One has a bihomogeneous (of degree zero) $R$-homomorphism
\begin{equation}
\label{eq:present_Rees_alg}
\AAA \longrightarrow  \Rees_R(I) \subset R[t] ,
\quad y_i  \mapsto  f_it.
\end{equation} 
Thus, the bigraded structure of $\Rees_R(I)$ is given by 
\begin{equation}
	\label{eq_bigrad_Rees_alg}
	\Rees_R(I) = \bigoplus_{c, n \in \ZZ} {\left[\Rees_R(I)\right]}_{c, n} \quad \text{ and } \quad {\left[\Rees_R(I)\right]}_{c, n} = {\left[I^n\right]}_{c + nd}t^n.
\end{equation}
One is primarily interested in the $R$-grading of the Rees algebra, namely,
 ${\left[\Rees_R(I)\right]}_{c}=\bigoplus_{n=0}^\infty {\left[\Rees_R(I)\right]}_{c,n}$, and of particular interest is  
$$
{\left[\Rees_R(I)\right]}_{0}=\bigoplus_{n=0}^\infty {\left[I^n\right]}_{nd}t^n = A\left[[I]_dt\right] \simeq A\left[[I]_d\right]=\bigoplus_{n=0}^\infty {\left[I^n\right]}_{nd}\subset R.
$$
Clearly, $\Rees_R(I)=[\Rees_R(I)]_0\oplus \left(\bigoplus_{c\geq 1} [\Rees_R(I)]_c\right)=[\Rees_R(I)]_0\oplus \mm \Rees_R(I)$. Therefore, one gets
\begin{equation}
\label{eq_isom_Rees_0_special_fiber}
A\left[[I]_d\right]\simeq {\left[\Rees_R(I)\right]}_0\simeq \Rees_R(I)/\mm \Rees_R(I)
\end{equation}
as graded $A$-algebras.

\begin{definition}\label{relative_fiber_cone}\rm
Because of its resemblance to the fiber cone in the case of a local ring, one here refers to the right-most algebra in \autoref{eq_isom_Rees_0_special_fiber} as the (relative) {\em fiber cone} of $I$,
and often identify it with the $A$-subalgebra $A\left[[I]_d\right]\subset R$ by the above natural isomorphism.
It will also be denoted by $\fib_R(I)$.
\end{definition}

\begin{remark}\rm
	If $R$ has a distinguished or special maximal ideal $\mm$ (that is, if $R$ is graded with graded irrelevant ideal $\mm$ or if $R$ is local with maximal ideal $\mm$), then the fiber cone also receives the name of \textit{special fiber ring}.
\end{remark}

\section{Rational maps over a domain}
\label{sec:rat_maps}

In this part one develops the main points of the theory of rational maps with source and target projective varieties defined over an arbitrary Noetherian domain of finite Krull dimension.
Some of these results will take place in the case the source is a biprojective (more generally, a multi-projective) variety, perhaps with some extra work in the sleeve. 
From now on assume that $R$ is a domain, which in particular implies that $A={[R]}_0$ is also a domain. 
Some of the subsequent results will also work assuming that $R$ is reduced, but additional technology would be required. 

\subsection{Dimension}

In this subsection one considers a simple way of constructing chains of relevant graded prime ideals and draw upon it to algebraically describe the dimension of projective schemes. 
These results are possibly well-known, but one includes them anyway for the sake of completeness.

The following easy fact seems to be sufficiently known. 

\begin{lemma}
	\label{lem:extend_min_primes}
	Let $B$ be a commutative ring and $A \subset B$ a subring. 
	Then, for any minimal prime $\pp \in \Spec(A)$ there exists a minimal prime $\mathfrak{P} \in \Spec(B)$ such that $\pp = \mathfrak{P} \cap A$.
	\begin{proof}
		First, there is some prime of $B$ lying over $\pp$. Indeed, any prime ideal of the ring of fractions $B_{\pp}=B\otimes_A A_{\pp}$ is the image of a prime ideal $P\subset B$ not meeting $A\setminus \pp$, hence contracting to $\pp$.
		
		For any descending chain of prime ideals $P=P_0\supsetneq P_1\supsetneq \cdots$ such that $P_i\cap A \subseteq \pp$ for every $i$, their intersection $Q$ is prime and obviously $Q\cap A\subseteq \pp$.
		Since $\pp$ is minimal, then $Q \cap A = \pp$.
		
		Therefore, Zorn's lemma yields the existence of a minimal prime in $B$ contracting to $\pp$.
	\end{proof}
\end{lemma}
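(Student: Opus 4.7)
The plan is to produce a minimal prime of $B$ contracting to $\pp$ in two moves: first exhibit \emph{some} prime of $B$ lying over $\pp$, then descend within the set of such primes using Zorn's lemma, and finally verify that the prime obtained is actually minimal in $B$ (not just minimal among those contracting to $\pp$).

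First, I would pass to the localization $B_{\pp} := B \otimes_A A_{\pp}$. Since $A \hookrightarrow B$ is injective and localization is flat, the induced map $A_{\pp} \hookrightarrow B_{\pp}$ is injective, so in particular $B_{\pp} \neq 0$. Hence $B_{\pp}$ admits some prime (e.g.\ a maximal ideal), which pulls back along $B \to B_{\pp}$ to a prime $P \subset B$ disjoint from the image of $A \setminus \pp$. Thus $P \cap A \subseteq \pp$, and since $\pp$ is a minimal prime of $A$, no prime of $A$ sits strictly below $\pp$, forcing $P \cap A = \pp$.

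Next, I would apply Zorn's lemma to the poset $\mathcal{F} := \{P \in \Spec(B) \mid P \cap A = \pp\}$ ordered by reverse inclusion (so maximal elements in this order are minimal primes in $B$ lying over $\pp$). The set $\mathcal{F}$ is nonempty by the previous step. Given a chain $P_0 \supsetneq P_1 \supsetneq \cdots$ in $\mathcal{F}$, the intersection $Q := \bigcap_i P_i$ is a prime of $B$ (intersection of a descending chain of primes is prime), and $Q \cap A = \bigcap_i (P_i \cap A) = \pp$, so $Q \in \mathcal{F}$ provides an upper bound. Zorn then yields a minimal element $\mathfrak{P}$ of $\mathcal{F}$.

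Finally I would verify that this $\mathfrak{P}$ is a minimal prime of $B$. If $\mathfrak{P}' \subsetneq \mathfrak{P}$ were a prime of $B$, then $\mathfrak{P}' \cap A \subseteq \mathfrak{P} \cap A = \pp$, and minimality of $\pp$ in $\Spec(A)$ would force $\mathfrak{P}' \cap A = \pp$, so $\mathfrak{P}' \in \mathcal{F}$, contradicting the minimality of $\mathfrak{P}$ in $\mathcal{F}$. The main, essentially only, subtlety in the whole argument is this last minimality transfer; the rest is a routine going-up-style localization argument. Everything else (nonvanishing of $B_{\pp}$, primality of the intersection of a chain of primes, existence of maximal ideals) is standard.
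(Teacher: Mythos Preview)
Your proof is correct and follows essentially the same approach as the paper's: localize at $\pp$ to produce some prime of $B$ lying over $\pp$, then apply Zorn's lemma to a descending chain of such primes. Your version is in fact slightly more careful, since you explicitly justify that $B_{\pp}\neq 0$ and separately verify that the Zorn-minimal element of $\mathcal{F}$ is a minimal prime of $B$ (not just of $\mathcal{F}$), points the paper leaves implicit.
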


\begin{proposition}
	\label{prop:chain_graded_primes}
	Let $A$ be a Noetherian domain of finite Krull dimension $k=\dim(A)$ and let $R$ denote a finitely generated graded domain over $A$ with $[R]_0=A$.
	Let $\mm:=(R_+)$ be the graded irrelevant ideal of $R$. 
	If $\HT(\mm)\ge 1$, then there exists a chain of graded prime ideals
	$$
	0 = \mathfrak{P}_0 \subsetneq \cdots \subsetneq \mathfrak{P}_{k-1} \subsetneq \mathfrak{P}_k   
	$$
	such that $\mathfrak{P}_k \not\supseteq \mm$.
	\begin{proof}
		Proceed by induction on $k=\dim(A)$.
		
		The case $k=0$ it clear or vacuous. 
		Thus, assume that $k>0$.
		
	Let $\nnn$ be a maximal ideal of $A$ with $\HT(\nnn)=k$.
	By \cite[Theorem 13.6]{MATSUMURA} one can choose $0\neq a \in \nnn \subset A$ such that $\HT(\nnn/aA)=\HT(\nnn)-1$.
		Let $\qqq$ be a minimal prime of $aA$ such that $\HT(\nnn/\qqq)=\HT(\nnn)-1$.
		From the ring inclusion $A/aA \hookrightarrow R/aR$ (because $A/aA$ is injected as a graded summand) and \autoref{lem:extend_min_primes}, there is a  minimal prime $\mathfrak{Q}$ of $aR$ such that $\qqq = \mathfrak{Q} \cap A$.
		
		Clearly, $\mm\not\subseteq \mathfrak{Q}$. Indeed, otherwise $(\qqq,\mm)\subseteq \mathfrak{Q}$ and since $\mm$ is a prime ideal of $R$ of height at least $1$ then $(\qqq,\mm)$ has height at least $2$; this contradicts Krull's Principal Ideal Theorem since $\mathfrak{Q}$ is a minimal prime of a principal ideal. 
		
		Let $R^\prime = R/\mathfrak{Q}$ and $A^\prime=A/\qqq$. Then $R^\prime$ is a finitely generated graded algebra over $A^\prime$ with $[R^\prime]_0=A^\prime$ and $\mm':=([R^\prime]_+)=\mm R'$.
		Since $\mathfrak{Q} \not\supseteq \mm$, it follows $\HT(\mm R^\prime) \ge 1$ and by construction,  $\dim(A^\prime)=\dim(A)-1$.
		So by the inductive hypothesis there is a chain of graded primes $0 = \mathfrak{P}_0^\prime \subsetneq \cdots \subsetneq \mathfrak{P}_{k-1}^\prime$ in $R^\prime$ such that $\mathfrak{P}_{k-1}^\prime \not\supseteq \mm R^\prime$.
		Finally, for $j\ge 1$  define $\mathfrak{P}_j$ as the inverse image of $\mathfrak{P}_{j-1}^\prime$ via the surjection $R \twoheadrightarrow R^\prime$.		
	\end{proof}
\end{proposition}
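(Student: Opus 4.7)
The plan is to induct on $k = \dim(A)$. The base case $k = 0$ is immediate: since $R$ is a domain, the one-element chain $\mathfrak{P}_0 = 0$ works, and $0 \not\supseteq \mm$ because the hypothesis $\HT(\mm) \ge 1$ forces $\mm \ne 0$.

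For the inductive step, the strategy is to produce a graded prime $\mathfrak{Q} \subset R$ of height $1$ that does not contain $\mm$ and whose contraction $\qqq := \mathfrak{Q} \cap A$ satisfies $\dim(A/\qqq) = k - 1$. If this is achieved, then $R' := R/\mathfrak{Q}$ is a finitely generated graded domain over $A' := A/\qqq$ with $[R']_0 = A'$, $\dim(A') = k - 1$, and $\HT(\mm R') \ge 1$ (since $\mathfrak{Q} \not\supseteq \mm$). The inductive hypothesis then gives a chain $0 = \mathfrak{P}_0' \subsetneq \cdots \subsetneq \mathfrak{P}_{k-1}'$ in $R'$ with $\mathfrak{P}_{k-1}' \not\supseteq \mm R'$, which lifts along $R \twoheadrightarrow R'$ to primes $\mathfrak{P}_1 = \mathfrak{Q} \subsetneq \mathfrak{P}_2 \subsetneq \cdots \subsetneq \mathfrak{P}_k$ with $\mathfrak{P}_k \not\supseteq \mm$, and prepending $\mathfrak{P}_0 = 0$ yields the required chain.

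To manufacture $\mathfrak{Q}$, I would fix a maximal ideal $\nnn \subset A$ of height $k$, invoke \cite[Theorem 13.6]{MATSUMURA} to select a nonzero $a \in \nnn$ with $\HT(\nnn/aA) = k - 1$, and take a minimal prime $\qqq$ of $aA$ with $\qqq \subseteq \nnn$ and $\HT(\nnn/\qqq) = k - 1$. The general inequality $\HT(\qqq) + \dim(A/\qqq) \le \dim(A)$ together with $\qqq \ne 0$ then forces $\dim(A/\qqq) = k - 1$. Since $A = [R]_0$ sits inside $R$ as a graded $A$-module direct summand, the inclusion $A/aA \hookrightarrow R/aR$ persists, and \autoref{lem:extend_min_primes} supplies a minimal prime $\mathfrak{Q}$ of $aR$ contracting to $\qqq$; as $aR$ is a graded ideal of the graded ring $R$, this $\mathfrak{Q}$ may be taken to be graded.

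The main obstacle I anticipate is verifying $\mathfrak{Q} \not\supseteq \mm$, and here the crucial observation is that $\mm \cap A = 0$, because the graded irrelevant ideal lives in strictly positive degree while $A = [R]_0$. Thus any nonzero element of $\qqq \subseteq \mathfrak{Q}$ lies outside $\mm$, so if $\mm$ were contained in $\mathfrak{Q}$ then $\mm \subsetneq \mathfrak{Q}$ would be strict; combined with any prime of height $\ge 1$ below $\mm$ (which exists since $\HT(\mm) \ge 1$), this would yield a chain of length $\ge 2$ ending at $\mathfrak{Q}$, contradicting Krull's Principal Ideal Theorem applied to $\mathfrak{Q}$ as a minimal prime of the principal ideal $aR$. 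This rules out $\mm \subseteq \mathfrak{Q}$ and completes the inductive step.
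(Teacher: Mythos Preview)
Your proposal is correct and follows essentially the same approach as the paper: induction on $k=\dim(A)$, choosing $a\in A$ via \cite[Theorem 13.6]{MATSUMURA}, lifting a minimal prime $\qqq$ of $aA$ to a minimal graded prime $\mathfrak{Q}$ of $aR$ using \autoref{lem:extend_min_primes}, excluding $\mm\subseteq\mathfrak{Q}$ via Krull's Principal Ideal Theorem, and descending to $R/\mathfrak{Q}$ over $A/\qqq$. Your justification that $\mm\not\subseteq\mathfrak{Q}$ (via $\mm\cap A=0$ forcing $\mm\subsetneq\mathfrak{Q}$ strictly, hence $\HT(\mathfrak{Q})\ge 2$) is a minor rephrasing of the paper's argument that $(\qqq,\mm)\supsetneq\mm$ has height $\ge 2$.
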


Recall that $X:=\Proj(R)$ is a closed subscheme of $\PP_A^r$, for suitable $r$ ($=$ relative embedding dimension of $X$) whose underlying topological space is the set of all homogeneous prime ideals of $R$ not containing $\mm$ and it has a basis given by the open sets of the form $D_+(f):=\{\wp\in X| f\notin \wp\}$, where $f\in R_+$ is a homogeneous element of positive degree.
Here, the sheaf structure is given by the degree zero part of the homogeneous localizations 
$$
\Gamma\left(D_{+}(f), {\OO_{X}\mid}_{D_{+}(f)}\right): = R_{(f)} = \Big\{ \frac{g}{f^k} \mid g,f\in R, \deg(g) = k \deg(f) \Big\}.
$$

Let $K(X):= R_{(0)}$ denote the field of rational functions of $X$, where 
$$R_{(0)} = \Big\{ \frac{f}{g} \mid f,g \in R, \deg(f) =\deg(g), g\neq 0 \Big\},$$	
the degree zero part of the homogeneous localization of $R$ at the null ideal.

Likewise, denote $\PP_A^s=\Proj(S)=\Proj(A[y_0,\ldots,y_s])$.

The dimension $\dim (X)$ of the closed subscheme $X$ is defined to be the supremum of the lenghts of chains of irreducible closed subsets (see, e.g., \cite[Definition, p. 5 and p. 86]{HARTSHORNE}). 
The next result is possibly part of the dimensional folklore (cf. \cite[Lemma 1.2]{HYRY_MULTIGRAD}).

For any domain $D$, let $\Quot(D)$ denote its field of fractions.

\begin{corollary}\label{dimension_general}
	\label{lem_dim_X}
	For the integral subscheme $X=\Proj(R)\subset \PP_A^r$ one has
	$$
	\dim(X)=\dim(R)-1=\dim(A)+\trdeg_{\Quot(A)}\left(K(X)\right).
	$$
	\begin{proof}
		For any prime $\mathfrak{P} \in X$, the ideal $(\mathfrak{P},\mm)\neq R$ is an ideal properly containing $\mathfrak{P}$, hence the latter is not a maximal ideal.
		Therefore $\HT(\mathfrak{P}) \le \dim(R)-1$ for any $\mathfrak{P} \in X$, which clearly implies that $\dim(X) \le \dim(R)-1$.
		
		From \cite[Lemma 1.1.2]{simis1988krull} one gets the equalities 
		\begin{equation*}
		\dim\left(R\right)=\dim(A)+\HT(\mm)=\dim(A)+\trdeg_{\Quot(A)}(\Quot(R)).
		\end{equation*}
		There exists a chain of graded prime ideals $0 = \mathfrak{P}_0 \subsetneq \cdots \subsetneq \mathfrak{P}_{h-1} \subsetneq \mathfrak{P}_{h} = \mm$ such that $h=\HT(\mm)$ (see, e.g., \cite[Theorem 13.7]{MATSUMURA},  \cite[Theorem 1.5.8]{BRUNS_HERZOG}).
		Let $T=R/\mathfrak{P}_{h-1}$. 
		Since $\HT(\mm T)=1$, \autoref{prop:chain_graded_primes} yields the existence of a chain of graded prime ideals $0 = \mathfrak{Q}_0 \subsetneq \cdots \subsetneq \mathfrak{Q}_k$ in $T$, where $k=\dim(A)$ and $\mathfrak{Q}_k \not\supseteq \mm T$.
		By taking inverse images along the surjection $R \twoheadrightarrow T$, one obtains a chain of graded prime ideals not containing $\mm$ of length $h-1+k=\dim(R)-1$.
		Thus, one has the reverse inequality $\dim(X)\ge \dim(R)-1$.

		Now, for any $f \in {[R]}_1$, one has
		$
		\Quot(R) = R_{(0)}(f)
		$
		with  $f$ transcendental over $K(X)=R_{(0)}$.
		Therefore 
		$$
		\dim(X)=\dim(A)+\trdeg_{\Quot(A)}\left(\Quot(R)\right)-1=\dim(A)+\trdeg_{\Quot(A)}\left(K(X)\right),
		$$	
		as was to be shown.
	\end{proof}
\end{corollary}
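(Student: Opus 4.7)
The plan is to prove the two equalities separately, handling first $\dim(X) = \dim(R) - 1$ via a direct two-sided inequality argument, and then converting $\dim(R) - 1$ into transcendence-degree language via the Simis--Krull dimension formula.

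For the upper bound $\dim(X) \le \dim(R) - 1$, the key observation is purely algebraic: every $\mathfrak{P} \in X$ is, by definition, a homogeneous prime of $R$ not containing $\mm$. Hence the homogeneous ideal $(\mathfrak{P}, \mm)$ properly contains $\mathfrak{P}$ and is still proper in $R$ (since $R$ has other proper ideals, e.g.\ it embeds $A$). Consequently $\mathfrak{P}$ is never a maximal ideal of $R$, so any chain of primes realizing the dimension of $X$ can be extended inside $\Spec(R)$ by one additional step, giving $\HT(\mathfrak{P}) \le \dim(R) - 1$ and hence the inequality.

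For the lower bound $\dim(X) \ge \dim(R) - 1$, I would use \autoref{prop:chain_graded_primes} as the workhorse. Start with a saturated chain of graded primes $0 = \mathfrak{P}_0 \subsetneq \cdots \subsetneq \mathfrak{P}_h = \mm$ with $h = \HT(\mm)$, whose existence follows from the standard graded structure theory (see \cite{MATSUMURA, BRUNS_HERZOG}). Then pass to $T := R/\mathfrak{P}_{h-1}$, a finitely generated graded domain over $A$ with $[T]_0 = A$ and $\HT(\mm T) = 1$. Applying \autoref{prop:chain_graded_primes} to $T$ produces a chain of graded primes $0 = \mathfrak{Q}_0 \subsetneq \cdots \subsetneq \mathfrak{Q}_k$ in $T$ of length $k = \dim(A)$ whose top $\mathfrak{Q}_k$ does not contain $\mm T$. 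Pulling this chain back to $R$ via the quotient map and concatenating with $0 \subsetneq \mathfrak{P}_1 \subsetneq \cdots \subsetneq \mathfrak{P}_{h-1}$ yields a chain of $h - 1 + k$ strict inclusions of graded primes of $R$, none containing $\mm$. Combined with \cite[Lemma~1.1.2]{simis1988krull} giving $\dim(R) = \dim(A) + \HT(\mm) = k + h$, this chain has length $\dim(R) - 1$, as needed.

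For the final equality, I would invoke the second half of \cite[Lemma~1.1.2]{simis1988krull}, namely $\HT(\mm) = \trdeg_{\Quot(A)}(\Quot(R))$. Then, picking any nonzero $f \in [R]_1$, one has $\Quot(R) = R_{(0)}(f) = K(X)(f)$ with $f$ transcendental over $K(X)$ (no relation of the form $\sum a_i f^i = 0$ with $a_i \in R_{(0)}$ can hold in a graded domain after clearing denominators). This drops the transcendence degree by exactly one, and substituting into $\dim(R) = \dim(A) + \trdeg_{\Quot(A)}(\Quot(R))$ gives the desired identity. The single nontrivial step is the lower bound $\dim(X) \ge \dim(R) - 1$, which really rests on \autoref{prop:chain_graded_primes}; the remainder is bookkeeping with established dimension formulas.
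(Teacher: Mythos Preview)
Your proposal is correct and follows essentially the same approach as the paper's proof: the same upper-bound argument via non-maximality of primes in $X$, the same lower-bound construction using a saturated graded chain up to $\mm$ combined with \autoref{prop:chain_graded_primes} applied to $T=R/\mathfrak{P}_{h-1}$, and the same transcendence-degree reduction via \cite[Lemma~1.1.2]{simis1988krull} and $\Quot(R)=K(X)(f)$. The only cosmetic difference is that you spell out a bit more explicitly why $[T]_0=A$ and why $f$ is transcendental over $R_{(0)}$.
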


Next, one deals with the general multi-graded case, which follows by using an embedding and reducing the problem to the single-graded setting.

Let $T=\bigoplus_{\mathbf{n} \in \NN^m} {[T]}_{\mathbf{n}}$ be a standard $m$-graded ring over ${[T]}_{\mathbf{0}}=A$, where $\mathbf{0}=(0,0,\ldots,0) \in \NN^m$.
The multi-graded irrelevant ideal in this case is given by $\nn = \bigoplus_{n_1>0,\ldots,n_m>0} {[T]}_{n_1,\ldots,n_m}$.
Here, one also assumes that $T$ is a domain.

Similarly to the single-graded case, one defines a multi-projective scheme from $T$.
The multi-projective scheme  $\multProj(T)$ is given by 
the set of all multi-homogeneous prime ideals in $T$ which do not contain $\nn$, and its scheme structure is obtained by using multi-homogeneous localizations. 
The multi-projective scheme $Z:=\multProj(T)$ is a closed subscheme of $\PP_A^{r_1} \times_{A} \PP_A^{r_2} \cdots \times_A \PP_A^{r_m}$, for suitable integers $r_1,\ldots,r_m$.

Let $T^{(\Delta)}$ be the single-graded ring $T^{(\Delta)}=\bigoplus_{n\ge 0} {[T]}_{(n,\ldots,n)}$, then the natural inclusion $T^{(\Delta)} \hookrightarrow T$ induces an isomorphism of schemes 
$Z=\multProj(T) \xrightarrow{\simeq} \Proj\left(T^{(\Delta)}\right)$ (see, e.g.,~\cite[Exercise II.5.11]{HARTSHORNE}), corresponding to the Segre embedding
$$
\PP^{r_1}_A \times_A \PP^{r_2}_A \times_A \cdots \times_A \PP^{r_m}_A \longrightarrow \PP^{(r_1+1)(r_2+1)\cdots(r_m+1)-1}_A.
$$

Since one is assuming that $Z$ is an integral scheme, the field of rational functions of $Z$ is given by 
$$
K(Z):=T_{(0)} = \Big\{ \frac{f}{g} \mid f,g \in T, \deg(f) =\deg(g), g\neq 0 \Big\}.	
$$

The following result yields a multi-graded version of \autoref{dimension_general}.

\begin{corollary}\label{lem:dim_biProj}
	For the integral subscheme $Z=\multProj(T) \subset \PP_A^{r_1} \times_{A} \PP_A^{r_2} \cdots \times_A \PP_A^{r_m}$ one has 
	$$
	\dim(Z) = \dim(T)-m=\dim(A)+\trdeg_{\Quot(A)}\left(K(Z)\right).
	$$
	\begin{proof}
		From the isomorphism $Z \simeq \Proj\left(T^{(\Delta)}\right)$ and \autoref{dimension_general}, it follows that $$\dim(Z)=\dim(A)+\trdeg_{\Quot(A)}\left(K(Z)\right).
		$$
		The dimension formula of \cite[Lemma 1.1.2]{simis1988krull} gives 
		$
		\dim(T)=\dim(A) + \trdeg_{\Quot(A)}\left(\Quot(T)\right).
		$
		
		Choose homogeneous elements $f_1 \in {[T]}_{(1,0,\ldots,0)}, f_2 \in {[T]}_{(0,1,\ldots,0)}, \ldots, f_m \in {[T]}_{(0,0,\ldots,1)}$.
		Then, one has 
		$
		\Quot(T) = T_{(0)}(f_1,f_2,\ldots,f_m)
		$
		with $\{f_1,f_2,\ldots,f_m\}$ a transcendence basis over $K(Z)=T_{({0})}$. 
		Therefore
		$
			\dim(Z)=\dim(A) + \trdeg_{\Quot(A)}\left(\Quot(T)\right)-m
			=\dim(T)-m,
		$
		and so the result follows.
	\end{proof}
\end{corollary}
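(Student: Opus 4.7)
The plan is to reduce the multi-graded statement to the already-established single-graded case \autoref{dimension_general} via the Segre embedding, and then to compare $\Quot(T)$ with $K(Z)$ at the level of transcendence degree over $\Quot(A)$.

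First I would invoke the isomorphism $Z \simeq \Proj(T^{(\Delta)})$ noted right before the statement. Since $T^{(\Delta)}$ is a finitely generated graded $A$-domain with $[T^{(\Delta)}]_0=A$, \autoref{dimension_general} applies verbatim and yields
$$
\dim(Z) \;=\; \dim(A) + \trdeg_{\Quot(A)}\bigl(K(Z)\bigr),
$$
once one checks the (immediate) identification $K(Z)=T_{(0)}=\bigl(T^{(\Delta)}\bigr)_{(0)}$: every fraction $f/g$ of multi-homogeneous elements of equal multi-degree can be rewritten as a fraction of elements of $T^{(\Delta)}$ by clearing with a common monomial in the $f_i$'s below.

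For the remaining equality $\dim(Z)=\dim(T)-m$, I would apply the dimension formula of \cite[Lemma 1.1.2]{simis1988krull} to obtain $\dim(T) = \dim(A) + \trdeg_{\Quot(A)}(\Quot(T))$, so it suffices to prove
$$
\trdeg_{\Quot(A)}\bigl(\Quot(T)\bigr) \;=\; \trdeg_{\Quot(A)}\bigl(K(Z)\bigr) + m.
$$
To this end, pick elements $f_i\in [T]_{e_i}$ (where $e_i$ is the $i$-th standard basis vector of $\ZZ^m$); these exist because $T$ is standard multi-graded. The natural plan is to show that $\{f_1,\ldots,f_m\}$ is a transcendence basis of $\Quot(T)$ over $K(Z)$. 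Generation is straightforward: any multi-homogeneous $g\in T$ of multi-degree $(n_1,\ldots,n_m)$ can be factored as $g = (g/f_1^{n_1}\cdots f_m^{n_m})\cdot f_1^{n_1}\cdots f_m^{n_m}$, with the first factor of multi-degree $\mathbf{0}$, hence inside $T_{(0)} = K(Z)$.

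The main obstacle — really the only non-formal point — is algebraic independence of $\{f_1,\ldots,f_m\}$ over $K(Z)$. I would handle this by a standard multi-homogenization argument: given a polynomial relation $P(f_1,\ldots,f_m)=0$ with coefficients in $K(Z)$, clear denominators to assume the coefficients live in $T$, then separate by multi-degree. Since distinct monomials in the $f_i$ carry distinct multi-degrees, the components do not interact, forcing each coefficient to vanish. Combining this transcendence-basis conclusion with the two displayed equalities above gives the claimed chain $\dim(Z)=\dim(T)-m=\dim(A)+\trdeg_{\Quot(A)}(K(Z))$.
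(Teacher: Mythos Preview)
Your proposal is correct and follows essentially the same approach as the paper: reduce to the single-graded case via $Z\simeq\Proj(T^{(\Delta)})$ and \autoref{dimension_general}, invoke the dimension formula $\dim(T)=\dim(A)+\trdeg_{\Quot(A)}(\Quot(T))$, and bridge the two by showing $\{f_1,\ldots,f_m\}$ with $f_i\in[T]_{e_i}$ is a transcendence basis of $\Quot(T)$ over $K(Z)=T_{(0)}$. You spell out more detail than the paper does (the identification $K(Z)=(T^{(\Delta)})_{(0)}$ and the independence argument via multi-degree separation), but the skeleton is identical.
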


A generalization for closed subschemes of $\PP^{r_1}_A \times_A \PP^{r_2}_A \times_A \cdots \times_A \PP^{r_m}_A$ is immediate.

\begin{corollary}
	\label{cor:dim_mult_proj_sub_scheme}
	For a closed subscheme $W=\multProj(C) \subset \PP^{r_1}_A \times_A \PP^{r_2}_A \times_A \cdots \times_A \PP^{r_m}_A$ one has 
	$$
	\dim(W) = \max\big\lbrace \dim\left(C/\pp\right)-m \mid \pp \in W \cap {\normalfont\text{Min}}(C) \big\rbrace.
	$$
\end{corollary}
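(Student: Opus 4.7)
The plan is to reduce to the integral case treated in \autoref{lem:dim_biProj} by decomposing $W$ into its irreducible components.

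First, I would observe that since $A$ is Noetherian and $C$ is finitely generated over $A$, the ring $C$ is Noetherian and hence has only finitely many minimal primes; moreover, each minimal prime of a multi-graded Noetherian ring is automatically multi-homogeneous. The underlying topological space of $W = \multProj(C)$ consists of the multi-homogeneous primes of $C$ not containing the multi-graded irrelevant ideal $\nn$, and the irreducible closed subsets of $W$ are precisely the $V_{+}(\pp) = \overline{\{\pp\}}$ for multi-homogeneous primes $\pp$ of $C$ with $\pp \not\supseteq \nn$.

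Next, I would identify the irreducible components of $W$ with the set $W \cap \mathrm{Min}(C) = \{\pp \in \mathrm{Min}(C) \mid \pp \not\supseteq \nn\}$: indeed, a prime $\pp \not\supseteq \nn$ gives an irreducible component of $W$ exactly when it is minimal among such primes, and by going-down arguments in $C$ itself, these are precisely the minimal primes of $C$ that fail to contain $\nn$. For each $\pp \in W \cap \mathrm{Min}(C)$ the reduced induced structure on $\overline{\{\pp\}}$ is the integral closed subscheme $\multProj(C/\pp)$, and by \autoref{lem:dim_biProj} applied to the graded domain $C/\pp$ (noting that its irrelevant ideal has positive height since $\pp \not\supseteq \nn$) one has
$$
\dim\bigl(\multProj(C/\pp)\bigr) = \dim(C/\pp) - m.
$$

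Finally, since $W$ is a Noetherian topological space, its dimension equals the maximum of the dimensions of its irreducible components; combining this with the previous step gives the claimed formula. The only mild obstacle is verifying that the minimal primes giving rise to the irreducible components of $W$ are exactly those in $W \cap \mathrm{Min}(C)$ (as opposed to primes minimal among those not containing $\nn$ but not minimal in $C$), which is handled by the observation that any prime of $C$ contains some minimal prime of $C$, and if the former avoids $\nn$ then so does the latter.
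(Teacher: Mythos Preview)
Your argument is correct and is exactly the decomposition into irreducible components plus application of \autoref{lem:dim_biProj} that the paper has in mind; in fact the paper gives no proof at all, merely declaring the generalization ``immediate'' from \autoref{lem:dim_biProj}. Your care in checking that the irreducible components of $W$ correspond precisely to $W\cap\mathrm{Min}(C)$ (rather than to primes minimal only among those missing $\nn$) is the one point worth making explicit, and you handle it correctly.
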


\subsection{Main definitions}

One restates the following known concept.
\begin{definition}
	\label{def:rat_map}
	\rm Let $\Rat(X, \PP_A^s)$ denote the set of pairs $(U,\varphi)$ where $U$ is a dense open subscheme of $X$ and where $\varphi:U \rightarrow \PP_A^s$ is a morphism of $A$-schemes. 
	Two pairs $(U_1,\varphi_1), (U_2,\varphi_2) \in \Rat(X, \PP_A^s)$ are said to be {\em equivalent} if there exists a dense open subscheme $W \subset U_1 \cap U_2$ such that ${\varphi_1\mid}_{W}={\varphi_2\mid}_{W}$.
	This gives an equivalence relation on $\Rat(X, \PP_A^s)$.
	A {\em rational map} is defined to be  an equivalence class in $\Rat(X, \PP_A^s)$ and any element of this equivalence class is said to define the rational map.
\end{definition}

A rational map as above is denoted $\FF:X \dashrightarrow \PP_A^s$, where the dotted arrow reminds one that typically it will not be defined everywhere as a map.
In \cite[Lecture 7]{Harris} (see also \cite{AB_INITIO}) it is explained that, in the case where $A$ is a field the above definition is equivalent to a more usual notion of a rational map in terms of homogeneous coordinate functions.
Next, one proceeds to show that the same is valid in the relative environment over $A$.

First it follows from the definition that any morphism $U \rightarrow \PP_A^s$ as above from a  dense open subset defines a unique rational map $X \dashrightarrow \PP_A^s$.
Now, let there be given $s+1$ forms $\mathbf{f}=\{f_0,f_1,\ldots,f_s\} \subset R$ of the same degree $d>0$.
Let $\mathfrak{h}:S\rightarrow R$ be the graded homomorphism of $A$-algebras given by 
\begin{align*}
\mathfrak{h}:S=A[y_0,y_1,\ldots,y_s] &\longrightarrow R\\
y_i & \mapsto f_i.
\end{align*}
There corresponds to it a morphism of $A$-schemes 
\begin{equation*}
\label{eq:Proj_map}
\Phi(\mathbf{f})=\Proj(\mathfrak{h}): D_+(\mathbf{f}) \longrightarrow \Proj(S) = \PP_A^s
\end{equation*}
where $D_+(\mathbf{f}) \subset \Proj(R) = X$ is the open subscheme given by 
$$
D_+(\mathbf{f}) = \bigcup_{i=0}^s D_+(f_i).
$$
Therefore, a set of $s+1$ forms $\mathbf{f}=\{f_0,f_1,\ldots,f_s\} \subset R$ of the same positive degree determines a unique rational map given by the equivalence class of $\left(D_+(\mathbf{f}), \Phi(\mathbf{f}) \right)$ in $\mathfrak{R}(X,\PP_A^s)$.

\begin{definition}\label{f-coordinate}\rm
	Call $\Phi(\mathbf{f})$ the {\em $\mathbf{f}$-coordinate morphism} and denote the corresponding rational map by $\FF_{\mathbf{f}}$.
\end{definition}

Conversely: 

\begin{lemma}
	\label{lem:find_representative}
	Any rational map $\FF:X=\Proj(R) \dashrightarrow \PP_A^s$ is of the form $\FF_{\mathbf{f}}$, where $\mathbf{f}$ are forms of the same positive degree.
	\begin{proof}
		Let $U$ be a dense open subset in $X$ and $\varphi:U \rightarrow \PP_A^s$ be a morphism, such that the equivalence class of the pair $(U, \varphi)$ in $\Rat(X, \PP_A^s)$ is equal to $\FF$.
		
		Consider $V=D_+(y_0)$ and $W = {\varphi}^{-1}(V)$ and 
		restrict to an affine open subset, $W^\prime =\Spec(R_{(\ell)}) \subset W$, where $\ell \in R$ is a homogeneous element of positive degree. 	
		It yields a morphism ${\varphi\mid}_{W^\prime}:W^\prime \rightarrow V$, that corresponds to a ring homomorphism
		$
		\tau : S_{(y_0)} \rightarrow R_{(\ell)}
		$.
		For each $0<i\le s$ one has
		$$
		\tau\left(\frac{y_i}{y_0}\right) = \frac{g_i}{\ell^{\alpha_i}}
		$$	
		where $\deg(g_i)=\alpha_i\deg(\ell)$.
		Setting $\alpha:=\max_{1\le i \le s} \{\alpha_i\}$, one writes
		$$
		f_0: = \ell^\alpha \quad \text{ and }\quad f_i: =\ell^\alpha\frac{g_i}{\ell^{\alpha_i}}= \ell^{\alpha-\alpha_i}g_i \text{ for } 1 \le i \le s.
		$$
		By construction, ${\varphi\mid}_{W^\prime}={\Phi(\mathbf{f})\mid}_{W^\prime}$, where $\Phi(\mathbf{f})$ denotes the $\mathbf{f}$-coordinate morphism determined by $\mathbf{f}=\{f_0,\ldots,f_s\}$, as in definition \autoref{f-coordinate}, hence $\FF=\FF_{\mathbf{f}}$ where $\mathbf{f}=\{f_0,\ldots,f_s\}$.
	\end{proof}
\end{lemma}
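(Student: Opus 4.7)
The plan is to represent $\FF$ by an actual morphism $\varphi: U \to \PP_A^s$ on a dense open $U\subset X$, then exploit the standard affine cover $\PP_A^s = \bigcup_{i=0}^{s} D_+(y_i)$ together with the basis of principal opens $D_+(\ell)$ on $X$ (with $\ell \in R$ homogeneous of positive degree) to translate $\varphi$ into ring-theoretic data from which the desired forms $\mathbf{f}$ can be read off.

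First, I would pick any pair $(U,\varphi)$ in the equivalence class $\FF$. Since the affines $D_+(y_i)$ cover $\PP_A^s$, at least one preimage $\varphi^{-1}(D_+(y_i))$ is a nonempty open subset of $U$, and after relabeling the $y_i$ I may assume this holds for $i=0$. Set $W = \varphi^{-1}(D_+(y_0))$ and shrink to a principal affine $W' = D_+(\ell) \subset W$ with $\ell \in R$ homogeneous of positive degree. Because $R$ is a domain, $X=\Proj(R)$ is irreducible, so the nonempty open $W'$ is automatically dense in $X$; therefore any morphism agreeing with $\varphi$ on $W'$ represents the same rational map as $\FF$.

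On the level of rings, the restriction ${\varphi\mid}_{W'}: \Spec(R_{(\ell)}) \to \Spec(S_{(y_0)})$ is dual to an $A$-algebra homomorphism $\tau: A[y_1/y_0,\ldots,y_s/y_0] \to R_{(\ell)}$. Writing each image as $\tau(y_i/y_0) = g_i/\ell^{\alpha_i}$ with $g_i$ homogeneous of degree $\alpha_i \deg(\ell)$, I would clear denominators to a common degree: set $\alpha = \max_i \alpha_i$, put $f_0 = \ell^\alpha$, and $f_i = \ell^{\alpha-\alpha_i} g_i$ for $i=1,\ldots,s$. These are then forms of the same positive degree $\alpha \deg(\ell)$.

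To finish, I would verify that the $\mathbf{f}$-coordinate morphism $\Phi(\mathbf{f})$ of \autoref{f-coordinate} agrees with $\varphi$ on $W'$: by construction $W' \subset D_+(f_0)$, and on this piece $\Phi(\mathbf{f})$ sends $y_i/y_0$ to $f_i/f_0 = g_i/\ell^{\alpha_i} = \tau(y_i/y_0)$, so the two morphisms coincide. Since $W'$ is dense, one concludes $\FF = \FF_{\mathbf{f}}$. The only slightly delicate step is ensuring that $\varphi^{-1}(D_+(y_0))$ can be taken nonempty, which is handled by the cover plus a relabeling; everything else is routine bookkeeping with homogeneous localizations.
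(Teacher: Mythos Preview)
Your proposal is correct and follows essentially the same approach as the paper: reduce to a principal affine $D_+(\ell)$ mapping into $D_+(y_0)$, read off the ring map $\tau$, and clear denominators to produce the forms $f_0 = \ell^\alpha$, $f_i = \ell^{\alpha-\alpha_i} g_i$. You are in fact slightly more careful than the paper on two points (the relabeling so that $\varphi^{-1}(D_+(y_0)) \neq \emptyset$, and the explicit check that $W'$ is dense and $\Phi(\mathbf{f})$ agrees with $\varphi$ there), but the argument is the same.
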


Given a rational map $\FF:X \dashrightarrow \PP_A^s$, any ordered $(s+1)$-tuple $\mathbf{f}=\{f_0,f_1,\ldots,f_s\}$ of forms of the same positive degree such that $\FF=\FF_{\mathbf{f}}$ is called a {\em representative} of the rational map $\FF$.

The following result explains the flexibility of representatives of the same rational map.

\begin{lemma}
	\label{lem_propor_representatives}
	Let $\mathbf{f}=\{f_0,\ldots,f_s\}$ and $\mathbf{f}'=\{f'_0,\ldots,f'_s\}$ stand for representatives of a rational map $\FF:X \dashrightarrow \PP_A^s$.
	Then $(f_0:\cdots :f_s)$ and $(f'_0:\cdots :f'_s)$ are proportional coordinate sets in the sense that there exist homogeneous forms $h,h'$ of positive degree such that $hf^\prime_i=h^\prime f_i$ for $i=0,\ldots,s$.
\end{lemma}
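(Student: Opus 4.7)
The plan is to reduce everything to the generic point $\xi$ of $X=\Proj(R)$ and identify both $\mathbf{f}$ and $\mathbf{f}'$ as homogeneous coordinates of the same $K(X)$-valued point of $\PP_A^s$. Since $R$ is a domain, $X$ is integral and $\xi$ lies in every nonempty open subset. In particular, $\xi$ belongs to any dense open $W \subset D_+(\mathbf{f}) \cap D_+(\mathbf{f}')$ on which the morphisms $\Phi(\mathbf{f})$ and $\Phi(\mathbf{f}')$ of \autoref{f-coordinate} coincide (such a $W$ exists because $\mathbf{f}$ and $\mathbf{f}'$ represent the same rational map $\FF$). Consequently the two morphisms induce one and the same point $P \in \PP_A^s(K(X))$.

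Next, I would show that for each index $i$, the equivalence $f_i=0 \iff f'_i=0$ holds. Unraveling \autoref{eq:Proj_map} at the generic point, the preimage $\Phi(\mathbf{f})^{-1}(D_+(y_i))$ is precisely $D_+(f_i)$; thus $P$ lies in $D_+(y_i)$ iff $\xi \in D_+(f_i)$, i.e.\ iff $f_i \neq 0$ in the domain $R$. The same is true for $\mathbf{f}'$, so the two conditions are equivalent because $P\in D_+(y_i)$ is an intrinsic property of $P$. Fix now any $i$ with $f_i\neq 0$ (such $i$ exists as $\mathbf{f}$ is not the zero tuple); then also $f'_i\neq 0$, and in the affine chart $D_+(y_i) = \Spec(A[y_0/y_i,\ldots,y_s/y_i])$ the point $P$ is described by $y_k/y_i \mapsto f_k/f_i$ via $\mathbf{f}$ and by $y_k/y_i \mapsto f'_k/f'_i$ via $\mathbf{f}'$. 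Comparing these two descriptions gives
$$
\frac{f_k}{f_i} \;=\; \frac{f'_k}{f'_i} \quad \text{in } K(X), \qquad k=0,\ldots,s.
$$

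Finally, since $R$ is a domain it embeds into $\Quot(R)$, so clearing denominators yields $f_i \cdot f'_k = f'_i \cdot f_k$ as an identity in $R$ for every $k$. Setting $h := f_i$ and $h' := f'_i$, both homogeneous of positive degree (they are entries of representatives, whose common degrees are $d, d' > 0$), one obtains $h f'_k = h' f_k$ for all $k$, which is exactly the required proportionality.

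The only step that deserves some care is the identification in the middle paragraph, namely checking that $\Phi(\mathbf{f})^{-1}(D_+(y_i)) = D_+(f_i)$ so that $P$ lands in the chart $D_+(y_i)$ precisely when $f_i\neq 0$ in $R$; this is a direct computation with the bihomogeneous map $\mathfrak{h}\colon S\to R$, $y_i\mapsto f_i$, but it is the one place where the concrete structure of the $\mathbf{f}$-coordinate morphism is used. Everything else is routine manipulation of homogeneous localizations.
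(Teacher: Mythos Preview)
Your proof is correct and takes essentially the same approach as the paper: reduce to an affine chart $D_+(y_i)$, identify the induced ring homomorphism to obtain $f_k/f_i = f'_k/f'_i$, and clear denominators to get $h=f_i$, $h'=f'_i$. The only cosmetic difference is that you argue at the generic point $\xi$ while the paper restricts to a principal affine open $W=\Spec(R_{(\ell)})$; your explicit verification that $f_i=0 \iff f'_i=0$ is in fact slightly more careful than the paper, which tacitly assumes both $f_0$ and $f'_0$ are nonzero.
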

\begin{proof}
	Proceed similarly to \autoref{lem:find_representative}.
	Let $\Phi(\mathbf{f}):D_+(\mathbf{f}) \rightarrow \PP_A^s$ and $\Phi(\mathbf{f^\prime}):D_+(\mathbf{f^\prime}) \rightarrow \PP_A^s$ be morphisms as in \autoref{f-coordinate}.
	Let $V = \Spec\left(D_+(y_0)\right)$ and choose $W=\Spec(R_{(\ell)})$ such that $W \subset {\Phi(\mathbf{f})}^{-1}\left(V\right) \,\cap\, {\Phi(\mathbf{f^\prime})}^{-1}\left(V\right)$ and ${\Phi(\mathbf{f})\mid}_W={\Phi(\mathbf{f^\prime})\mid}_W$.
	
	The morphisms ${\Phi(\mathbf{f})\mid}_W:W \rightarrow V$ and ${\Phi(\mathbf{f^\prime})\mid}_W:W \rightarrow V$ correspond with the ring homomorphisms $\tau: S_{(y_0)}\rightarrow R_{(\ell)}$ and $\tau^\prime: S_{(y_0)}\rightarrow R_{(\ell)}$ such that
	$$
	\tau\left(\frac{y_i}{y_0}\right) = \frac{f_i}{f_0} \quad \text{ and }\quad \tau^\prime\left(\frac{y_i}{y_0}\right) = \frac{f^\prime_i}{f^\prime_0},
	$$ 
	respectively.
	Since this is now an affine setting, the ring homomorphisms $\tau$ and $\tau^\prime$ are the same (see e.g. ~\cite[Theorem 2.35]{GORTZ_WEDHORN}, ~\cite[Proposition II.2.3]{HARTSHORNE}).
	It follows that, for every $i=0,\ldots,s$, $f'_i/f'_0=f_i/f_0$ as elements of the homogeneous total ring of quotients of $R$.
	Therefore, there are homogeneous elements $h,h^\prime\in R$ ($h=f_0, h^\prime=f_0^\prime$) such that $hf^\prime_i=h^\prime f_i$ for $i=0,\ldots,s$. The claim now follows.
\end{proof}

In the above notation, one often denotes $\FF_{\mathbf{f}}$ simply by $(f_0:\cdots :f_s)$ and use this symbol for a representative of $\FF$.

\begin{remark}\label{identity_map}\rm
	Note that the identity morphism of $\PP^r_A$ is a rational map of $\PP^r_A$ to itself with natural representative $(x_0:\cdots:x_r)$ where $\PP^r_A=\Proj(A[x_0,\ldots,x_r])$. 
	Similarly, the identity morphism of $X=\Proj(R)$ is a rational map represented by $(x_0:\cdots:x_r)$, where now $x_0,\ldots,x_r$ generate the $A$-module $[R]_1$, and it is denoted by $\text{Id}_X$.
\end{remark}

The following sums up a version of \cite[Proposition 1.1]{Simis_cremona} over a domain. 
Due to \autoref{lem_propor_representatives}, the proof is a literal transcription of the proof in loc.~cit.

\begin{proposition}
	Let $\FF:X \dashrightarrow \PP_A^s$ be a rational map with representative $\mathbf{f}$. Set $I=(\mathbf{f})$.
	Then, the following statements hold:
	\begin{enumerate}[\rm (i)]
		\item The set of representatives of $\FF$ corresponds bijectively to the non-zero homogeneous vectors in the rank one graded $R$-module $\Hom_R(I, R)$. 
		\item If $\grade(I)\ge 2$, any representative of $\FF$ is a multiple of $\mathbf{f}$ by a homogeneous element in $R$.
	\end{enumerate}
\end{proposition}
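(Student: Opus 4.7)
The plan is to build a map in each direction between representatives of $\FF$ and nonzero homogeneous elements of $\Hom_R(I,R)$, verify these are mutually inverse, and then exploit a standard $\Ext$-vanishing argument for part (ii).

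First I would set up the forward map. Given a representative $\mathbf{f}'=(f'_0,\ldots,f'_s)$ of $\FF$ of degree $d'$, apply \autoref{lem_propor_representatives} to get homogeneous $h,h'\in R$ with $hf'_i=h'f_i$ for all $i$. I would then define $\varphi_{\mathbf{f}'}\colon I\to R$ on the generators by $f_i\mapsto f'_i$ and check well-definedness: if $\sum a_if_i=0$, then multiplying $\sum a_if'_i$ by $h$ gives $h'\sum a_if_i=0$, and since $R$ is a domain and $h\ne 0$ we conclude $\sum a_if'_i=0$. This $\varphi_{\mathbf{f}'}$ is homogeneous of degree $d'-d$. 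For the reverse direction, given $0\ne \varphi\in \Hom_R(I,R)$ homogeneous, set $f'_i:=\varphi(f_i)$; these are forms of the same degree (not all zero, since $\varphi\ne 0$) and the $R$-linearity of $\varphi$ yields the cross-relations $f_jf'_i=\varphi(f_jf_i)=f_if'_j$. Taking $h:=f_{i_0}$, $h':=f'_{i_0}$ for any index $i_0$ with $f_{i_0}\ne 0$, \autoref{lem_propor_representatives} (or its proof) shows $(f'_0:\cdots :f'_s)$ represents the same rational map $\FF$ as $\mathbf{f}$, because on $D_+(f_{i_0})\cap D_+(f'_{i_0})$ the ratios $f'_i/f'_{i_0}=f_i/f_{i_0}$ coincide with the image of $y_i/y_{i_0}$ under the coordinate homomorphism.

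Next I would check the two constructions are mutually inverse on the nose: starting from $\mathbf{f}'$ and building $\varphi_{\mathbf{f}'}$, evaluating back on the generators $f_i$ recovers $f'_i$; starting from $\varphi$, the forms $\varphi(f_i)$ produce a representative whose associated homomorphism agrees with $\varphi$ on the generators, hence on all of $I$. For the rank-one claim, since $R$ is a domain and $I\ne 0$, localizing at the zero ideal gives $\Hom_R(I,R)\otimes_R \Quot(R)\simeq \Hom_{\Quot(R)}(I\otimes_R \Quot(R),\Quot(R))\simeq \Quot(R)$, so $\Hom_R(I,R)$ has rank one as a graded $R$-module. This finishes (i).

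For (ii), I would apply $\Hom_R(-,R)$ to the short exact sequence $0\to I\to R\to R/I\to 0$ to obtain
\[
0\longrightarrow \Hom_R(R/I,R)\longrightarrow R\longrightarrow \Hom_R(I,R)\longrightarrow \Ext^1_R(R/I,R).
\]
The assumption $\grade(I)\ge 2$ forces $\Hom_R(R/I,R)=\Ext^1_R(R/I,R)=0$, so the natural multiplication map $R\xrightarrow{\simeq}\Hom_R(I,R)$ is an isomorphism of graded $R$-modules. Translating via the bijection from (i), every $\varphi\in \Hom_R(I,R)$ is multiplication by some homogeneous element $r\in R$, which means every representative $\mathbf{f}'=(\varphi(f_0),\ldots,\varphi(f_s))$ equals $r\cdot \mathbf{f}$.

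The only real subtlety I expect is the compatibility step in paragraph one: confirming that when $\varphi$ is built from a representative $\mathbf{f}'$, the resulting tuple $(\varphi(f_0),\ldots,\varphi(f_s))$ genuinely defines the same rational map $\FF$ rather than merely a proportional tuple. This is handled transparently because \autoref{lem_propor_representatives} is an if-and-only-if statement in the domain setting: proportionality via homogeneous elements is exactly the equivalence relation on representatives. All the other steps are formal manipulations of graded homomorphisms and the $\Ext$ long exact sequence.
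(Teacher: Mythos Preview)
Your argument is correct and supplies precisely the details the paper omits by citing \cite[Proposition 1.1]{Simis_cremona}: the bijection is built via $\mathbf{f}'\mapsto(f_i\mapsto f'_i)$ with inverse $\varphi\mapsto(\varphi(f_0),\ldots,\varphi(f_s))$, and part (ii) follows from the $\Ext$-vanishing under $\grade(I)\ge 2$. Two small clarifications: \autoref{lem_propor_representatives} is stated only as an implication (same map $\Rightarrow$ proportional), so your sketch of the converse via agreement of coordinate morphisms on a common dense open is genuinely needed rather than a citation; and in that step you should choose $i_0$ with $f'_{i_0}\ne 0$ (which forces $f_{i_0}\ne 0$ since $f'_{i_0}=\varphi(f_{i_0})$), not the other way around.
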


\begin{remark}\rm
	If $R$ is in addition an UFD then any rational map has a unique representative up to a multiplier -- this is the case, e.g., when $A$ is a UFD and $R$ is a polynomial ring over $A$.	
\end{remark}

One more notational convention: if $\mathbf{f}=\{f_0,\ldots,f_s\}$ are forms of the same degree, $A[\mathbf{f}]$ will denote the $A$-subalgebra of $R$ generated by these forms.

An important immediate consequence is as follows:

\begin{corollary}
	\label{cor:isom_algebras_represent}
	Let $\mathbf{f}=(f_0:\cdots:f_s)$ and $\mathbf{f}'=(f'_0:\cdots:f'_s)$ stand for representatives of the same rational map $\FF:X=\Proj(R) \dashrightarrow \PP_A^s$. 
	Then $A[\mathbf{f}] \simeq A[\mathbf{f}']$ as graded $A$-algebras and $\Rees_R(I)\simeq \Rees_R(I^\prime)$ as bigraded $\AAA$-algebras, where $I=(\mathbf{f})$ and $I'=(\mathbf{f}')$.
\end{corollary}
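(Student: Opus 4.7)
The plan is to first establish the Rees algebra isomorphism and then derive the fiber cone isomorphism by restricting to the $R$-degree zero component, via the natural identification in \eqref{eq_isom_Rees_0_special_fiber}.

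The starting move is to invoke \autoref{lem_propor_representatives} to produce homogeneous elements $h, h' \in R$ of positive degree with $h f'_i = h' f_i$ for every $i$. Since $R$ is a domain, this yields a single nonzero factor $\lambda := h'/h \in \Quot(R)$, independent of $i$, such that $f'_i = \lambda f_i$ in $\Quot(R)$. This common proportionality is the leverage that makes the whole argument go through.

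Next I would consider the two canonical presentations from \eqref{eq:present_Rees_alg}, namely the surjective bihomogeneous $\AAA$-algebra maps $\pi, \pi' : \AAA \twoheadrightarrow \Rees_R(I), \Rees_R(I')$ sending $y_i \mapsto f_i t$ and $y_i \mapsto f'_i t$, respectively. Both maps are bihomogeneous of bidegree $(0,0)$ because, by \eqref{eq_bigrad_Rees_alg}, $\bideg(f_i t) = \bideg(f'_i t) = (0,1)$ in their respective Rees algebras (so the different generator degrees $d,d'$ are absorbed into the bidegree of $t$). The core step is to show $\ker(\pi) = \ker(\pi')$. Since both kernels are bihomogeneous, it is enough to test on a bihomogeneous element $P = \sum_{|\alpha|=n} c_\alpha y^\alpha$ with $c_\alpha \in [R]_c$. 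A direct substitution gives
$$
\pi'(P) \;=\; t^n \sum_\alpha c_\alpha (f')^\alpha \;=\; \lambda^n \cdot t^n \sum_\alpha c_\alpha f^\alpha \;=\; \lambda^n\, \pi(P),
$$
and because $\lambda^n$ is a nonzero element of $\Quot(R)$ while $R[t]$ is a domain, the vanishing $\pi(P)=0$ is equivalent to $\pi'(P)=0$. This identifies the two kernels.

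Consequently $\Rees_R(I) \simeq \AAA/\ker(\pi) = \AAA/\ker(\pi') \simeq \Rees_R(I')$ as bigraded $\AAA$-algebras, establishing the second assertion. For the fiber cone statement, I would simply take $R$-degree zero components on both sides: the isomorphism above is bihomogeneous, hence restricts to an isomorphism of the $(0,*)$-parts, which by \eqref{eq_isom_Rees_0_special_fiber} are $A[\mathbf{f}]$ and $A[\mathbf{f}']$ as graded $A$-algebras. The main (minor) obstacle is bookkeeping with the bigrading — specifically, confirming that the two Rees algebras really are bigraded $\AAA$-algebras via the same source grading despite the generator degrees $d$ and $d'$ being unequal — but this is settled by the bidegree computation above.
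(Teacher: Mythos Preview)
Your proof is correct and reaches the same conclusion as the paper, but by a genuinely different route. The paper argues structurally: from the proportionality $hf'_i=h'f_i$ it observes that $I\simeq I'$ have identical syzygies, so the defining ideals $\mathcal{L},\mathcal{L}'$ of the symmetric algebras coincide; since $R$ is a domain and $I,I'$ are nonzero, the Rees presentation ideals are recovered as $\mathcal{J}=\mathcal{L}:I^\infty=\mathcal{L}':{I'}^\infty=\mathcal{J}'$. You instead bypass the symmetric algebra entirely and verify $\ker(\pi)=\ker(\pi')$ by direct substitution, using the single scalar $\lambda=h'/h\in\Quot(R)$ and the domain hypothesis. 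Your approach is more elementary and self-contained; the paper's approach has the advantage of exhibiting the structural reason (same syzygy module) and of being a pattern that generalizes (e.g., to modules), but requires knowing the saturation description of the Rees defining ideal. Both approaches finish the fiber cone statement the same way, by passing to the $R$-degree zero part via \eqref{eq_isom_Rees_0_special_fiber}.
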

\begin{proof}
	Let $\mathcal{J}$ and $\mathcal{J}^\prime$ respectively denote the ideals of defining equations of $\Rees_R(I)$ and $\Rees_R(I^\prime)$, as given in \autoref{eq:present_Rees_alg}.
	From \autoref{lem_propor_representatives}, there exist homogeneous elements $h,h^\prime \in R$ such that $hf_i^\prime=h^\prime f_i$ for $i=0,\ldots,s$.
	Clearly, then $I\simeq I^\prime$ have the same syzygies, hence the defining ideals $\mathcal{L}$ and $\mathcal{L}'$ of the respective symmetric algebras coincide. Since $R$ is a domain  and $I$ and $I'$ are nonzero, then
	$\mathcal{J}=\mathcal{L}:I^{\infty}=\mathcal{L}':I'^{\infty}=\mathcal{J}^\prime.$
	Therefore, $\Rees_R(I) \simeq \AAA/\mathcal{J} = \AAA/\mathcal{J}^\prime \simeq \Rees_R(I^\prime)$ as bigraded $\AAA$-algebras. 
	Consequently, 
	$$
	A[\mathbf{f}] \simeq \Rees_R(I)/\mm\Rees_R(I) \simeq \Rees_R(I^\prime)/\mm\Rees_R(I^\prime) \simeq A[\mathbf{f^\prime}]
	$$
	as graded $A$-algebras.
\end{proof}

\subsection{Image, degree and birational maps}

This part is essentially a recap on the algebraic description of the image, the degree and the birationality of a rational map in the relative case. Most of the material here has been considered in a way or another as a previsible extension of the base field situation (see, e.g., \cite[Theorem 2.1]{Laurent_Jouanolou_Closed_Image}).  

\begin{defprop}\label{image}\rm
	Let $\FF:X \dashrightarrow \PP_A^s$ be a rational map. The {\em image} of $\FF$ is equivalently defined as:
	\begin{enumerate}
		\item[{\rm (I1)}] The closure of the image of a morphism $U\rightarrow \PP_A^s$ defining $\FF$, for some (any)  dense open subset $U \subset X$.
		\item[{\rm (I2)}] The closure of the image of the $\mathbf{f}$-coordinate morphism $\Phi(\mathbf{f})$, for some (any) representative $\mathbf{f}$ of $\FF$.
		\item[{\rm (I3)}] $\Proj\left(A[\mathbf{f}]\right)$, for some (any) representative $\mathbf{f}$ of $\FF$, up to degree normalization of $A[\mathbf{f}]$.
	\end{enumerate} 
\end{defprop}
\begin{proof}
	The equivalence of (I1) and (I2) is clear by the previous developments.
	To check that (I2) and (I3) are equivalent, consider the ideal sheaf $\mathcal{J}$ given as the kernel of the natural homomorphism 
	\begin{equation*}
	\OO_{\PP_A^s}\rightarrow {\Phi(\mathbf{f})}_* \OO_{D_+(\mathbf{f})}.
	\end{equation*}
	It defines a closed subscheme $Y\subset \PP_A^s$ which corresponds to the schematic image of $\Phi(\mathbf{f})$ (see, e.g., \cite[Proposition 10.30]{GORTZ_WEDHORN}).
	The underlying topological space of $Y$ coincides with the closure of the image of $\Phi(\mathbf{f})$.
	Now, for any $0\le i\le s$,  ${\OO_{\PP_A^s}}\left({D_+(y_i)}\right)=S_{(y_i)}$ and ${\left({\Phi(\mathbf{f})}_* \OO_{D_+(\mathbf{f})}\right)}\left({D_+(y_i)}\right)=R_{(f_i)}$.
	Then, for $0\le i\le s$, there is an exact sequence 
	$$
	0 \rightarrow {\mathcal{J}}\left({D_+(y_i)}\right) \rightarrow S_{(y_i)} \rightarrow R_{(f_i)}.
	$$
	Thus, ${\mathcal{J}}\left({D_+(y_i)}\right)=J_{(y_i)}$ for any  $0\le i\le s$, where $J$ is the kernel of the $A$-algebra homomorphism $\alpha:S\rightarrow A[\mathbf{f}] \subset R$ given by $y_i\mapsto f_i$.
	This implies that $\mathcal{J}$ is the sheafification of $J$.  Therefore, $Y \simeq \Proj(S/J) \simeq \Proj(A[\mathbf{f}]).$
\end{proof}

Now one considers the degree of a rational map $\FF:X \dashrightarrow \PP_A^s$. By \autoref{image}, the field of rational functions of the image $Y$ of $\FF$ is 
$$
K(Y)= {A[\mathbf{f}]}_{(0)}, 
$$
where $\mathbf{f}=(f_0:\cdots:f_s)$ is a representative of $\FF$.
Here $A[\mathbf{f}]$ is naturally $A$-graded as an $A$-subalgebra of $R$, but one may also consider it as a standard $A$-graded algebra by a degree normalization.

One gets a natural field extension 	$K(Y)= {A[\mathbf{f}]}_{(0)} \hookrightarrow R_{(0)} = K(X).$

\begin{definition}\label{degree}\rm
	The {\em degree} of $\FF:X \dashrightarrow \PP_A^s$ is
	$$
	\deg(\FF):=\left[K(X):K(Y)\right].
	$$
\end{definition}
One says  that $\FF$ is {\em generically finite} if $\left[K(X):K(Y)\right]<\infty$. 
If the field extension $K(X)|K(Y)$ is infinite, one agrees to say that $\FF$ has no well-defined degree (also, in this case, one often says that $\deg(\FF)=0$).

The following properties are well-known over a coefficient field. Its restatement in the relative case is for the reader's convenience.

\begin{proposition}
	\label{prop:generically_finite}
	Let $\FF:X\dashrightarrow \PP_A^s$ be a rational map with image $Y\subset \PP_A^s$. 
	\begin{enumerate}[\rm (i)]
		\item Let $\mathbf{f}$ denote a representative of $\FF$ and let $\Phi(\mathbf{f})$ be the associated $\mathbf{f}$-coordinate morphism.
		Then, $\FF$ is generically finite if and only if there exists a dense open subset $U \subset Y$ such that $\Phi(\mathbf{f})^{-1}(U) \rightarrow U$ is a finite morphism.
		\item  $\FF$ is generically finite if and only if $\dim(X)=\dim(Y)$.
	\end{enumerate}
\end{proposition}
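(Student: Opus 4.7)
The plan is to dispatch (ii) first, since it follows almost immediately from \autoref{dimension_general}, and then to handle (i) by a standard generic-finiteness argument that spreads out algebraic dependencies over an affine chart of the image.

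For (ii), I would apply \autoref{dimension_general} to both $X$ and $Y=\Proj(A[\mathbf{f}])$ (the latter taken with its standard-graded normalization, as allowed by \autoref{image}) and subtract, obtaining
\[
\dim(X)-\dim(Y)=\trdeg_{\Quot(A)}K(X)-\trdeg_{\Quot(A)}K(Y)=\trdeg_{K(Y)}K(X),
\]
where the last equality uses additivity of transcendence degree along the tower $\Quot(A)\subset K(Y)\subset K(X)$. Since $R$ is a finitely generated $A$-algebra, $K(X)$ is a finitely generated field extension of $\Quot(A)$, hence also of $K(Y)$; therefore $[K(X):K(Y)]<\infty$ if and only if $\trdeg_{K(Y)}K(X)=0$, which by the displayed equality is exactly $\dim(X)=\dim(Y)$.

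For (i), the ``if'' direction is clear: a finite dominant morphism between integral Noetherian schemes induces a finite extension of function fields, and $\Phi(\mathbf{f})^{-1}(U)$ is dense in $X$ because $\Phi(\mathbf{f})$ is dominant (cf.\ \autoref{image}), hence has function field $K(X)$. For the ``only if'' direction, my plan is to fix an affine open $V=\Spec(B)\subset Y$ and exploit that, since $X$ is Noetherian, the dense open subscheme $\Phi(\mathbf{f})^{-1}(V)\subset X$ is quasi-compact and admits a finite cover by affine opens $W_i=\Spec(C_i)$ with each $\Quot(C_i)=K(X)$ and each $B\hookrightarrow C_i$ injective of finite type. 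The hypothesis $[K(X):K(Y)]<\infty$ forces finitely many $B$-algebra generators of all the $C_i$ to be algebraic over $K(Y)=\Quot(B)$; clearing denominators in a single element $b\in B\setminus\{0\}$ makes every $(C_i)_b$ integral and finitely generated, hence finite, over $B_b$, and setting $U:=D(b)\subset V$ yields the desired finite morphism $\Phi(\mathbf{f})^{-1}(U)\to U$.

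The main obstacle I anticipate is the bookkeeping in the ``only if'' direction of (i): one must cope with the fact that $\Phi(\mathbf{f})^{-1}(V)$ need not be affine and then choose a single denominator $b\in B$ working uniformly over all affine patches. This is standard once quasi-compactness of $\Phi(\mathbf{f})^{-1}(V)$ (from Noetherianity of $R$), dominance of $\Phi(\mathbf{f})$ (from the definition of $Y$ in \autoref{image}), and the finite-type property of $\Phi(\mathbf{f})$ over $A$ are invoked explicitly, consistently with the paper's stated emphasis on fully rigorous proofs in the relative setting over a Noetherian domain.
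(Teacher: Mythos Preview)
Your argument for (ii) is correct and matches the paper's proof exactly: both apply \autoref{dimension_general} to $X$ and $Y$ and reduce to the vanishing of $\trdeg_{K(Y)}K(X)$.

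For (i), the paper simply invokes \cite[Exercise II.3.7]{HARTSHORNE} after noting $K(X)=K(D_+(\mathbf{f}))$, whereas you attempt to prove this spreading-out result directly. That is a reasonable choice given the paper's declared intent to supply full proofs over a Noetherian base, but your sketch of the ``only if'' direction has a genuine gap, not just bookkeeping. Having each restriction $(W_i)_b\to D(b)$ finite does \emph{not} imply that the union $\Phi(\mathbf{f})^{-1}(D(b))=\bigcup_i (W_i)_b\to D(b)$ is finite: finiteness is not local on the source, and a separated quasi-finite morphism need only be an open immersion into something finite (Zariski's Main Theorem). You cannot conclude properness of the union from properness of the pieces.

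The standard fix is to work with a \emph{single} affine open $W=\Spec(C)\subset D_+(\mathbf{f})$ mapping to $V=\Spec(B)$, invert $b\in B$ to make $C_b$ finite over $B_b$, and then shrink $U$ further so that $\Phi(\mathbf{f})^{-1}(U)\subset W_b$. For this last step one shows that the closed complement $Z:=D_+(\mathbf{f})\setminus W$ misses the generic fiber: since each affine chart of the generic fiber is $\Spec(C'\otimes_B K(Y))$ with $C'$ a finitely generated $B$-domain of fraction field $K(X)$, and $[K(X):K(Y)]<\infty$, one gets $C'\otimes_B K(Y)=K(X)$; hence $\Phi(\mathbf{f})^{-1}(\xi)=\{\eta\}\subset W$. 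Chevalley's theorem then gives a dense open $U_1\subset Y$ with $\Phi(\mathbf{f})^{-1}(U_1)\subset W$, and $U:=U_1\cap D(b)$ works. Once the preimage sits inside the single affine $W_b$, finiteness follows by base change from $C_b/B_b$.
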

\begin{proof}
	(i)	Let $\Phi(\mathbf{f}):D_+(\mathbf{f}) \subset X \rightarrow Y \subset \PP_A^s$ be the $\mathbf{f}$-coordinate morphism of $\FF$.
	One has an equality of fields of rational functions $K(X)=K\left(D_+(\mathbf{f})\right)$.
	But on $D_+(\mathbf{f})$ the rational map $\FF$ is defined by a morphism, in which case the result is given in \cite[Exercise II.3.7]{HARTSHORNE}.
	
	(ii) 
	By \autoref{lem_dim_X} one has $\dim(X)=\dim(A)+\trdeg_{\Quot(A)}\left(K(X)\right)$ and		
	by the same token,  $\dim(Y)=\dim(A)+{\rm trdeg}_{\Quot(A)}\left(K(Y)\right)$.
	It follows that 
	$$\dim(X)=\dim(Y) \;\Leftrightarrow\; {\rm trdeg}_{\Quot(A)}\left(K(X)\right)={\rm trdeg}_{\Quot(A)}\left(K(Y)\right).$$
	Since the later condition is equivalent to $\trdeg_{K(Y)}(K(X))=0$, one is through.
\end{proof}

Next one defines birational maps in the relative environment over $A$.
While any of the three alternatives below sounds equally fit as a candidate (as a deja vu of the classical coefficient field setup), showing that they are in fact mutually equivalent requires a small bit of work.  

\begin{defprop}\label{def:birational_maps}\rm
	Let $\FF:X \subset \PP_A^r \dashrightarrow\PP_A^s$ be a rational map with image $Y \subset \PP_A^s$. 
	The map $\FF$ is said to be {\em birational onto its image} if one of the following equivalent conditions is satisfied:
	\begin{enumerate}
		\item[\rm(B1)]  $\deg(\FF)=1$, that is $K(X)=K(Y)$.
		\item[\rm(B2)] There exists some dense open subset $U\subset X$ and a morphism $\varphi:U \rightarrow \PP_A^s$ such that the pair $(U,\varphi)$ defines $\FF$ and such that $\varphi$ is an isomorphism onto a  dense open subset  $V \subset Y$.
		\item[\rm(B3)] 
		There exists a rational map $\GG:Y \subset \PP_A^s \dashrightarrow X \subset \PP_A^r$ such that, for some (any)  representative $\mathbf{f}$ of $\FF$ and some (any) representative $\mathbf{g}=(g_0:\cdots:g_r)$  of $\GG$, the composite 
		$$
		\mathbf{g}(\mathbf{f})=\left(g_0(\mathbf{f}):\cdots:g_r(\mathbf{f})\right)
		$$ 
		is a representative of the identity rational map on $X$.
	\end{enumerate}
	\begin{proof}
		$\text{(B1)} \Rightarrow \text{(B2)}$.
		Let $\varphi^\prime:U^\prime \rightarrow \PP_A^s$ be a morphism from a dense open subset $U'\subset X$ such that $(U',\varphi')$ defines $\FF$.
		Let $\eta$ denote the generic point of $X$ and $\xi$ that of $Y$. 
		The field inclusion $\OO_{Y,\xi} \simeq K(Y) \hookrightarrow K(X) \simeq \OO_{X,\eta}$ coincides with the induced local ring homomorphism  
		$$
		{\left(\varphi^\prime\right)}_\eta^\sharp \,:\, \OO_{Y,\xi} \rightarrow \OO_{X,\eta}.
		$$
		Since by assumption $\deg(\FF)=1$,  ${\left(\varphi^\prime\right)}_\eta^\sharp$ is an isomorphism. 
		Then, by \cite[Proposition 10.52]{GORTZ_WEDHORN}  ${(\varphi^\prime)}_\eta^\sharp$ ``extends'' to an isomorphism from an open neighborhood $U$  of $\eta$ in $X$  onto an open neighborhood $V$ of $\xi$ in $Y$.
		Now, take the restriction $\varphi={\varphi^\prime\mid}_{U}:U\xrightarrow{\simeq} V$ as the required isomorphism.
	
		$\text{(B2)} \Rightarrow \text{(B3)}$ 
		Let $\varphi:U\subset X \xrightarrow{\simeq} V \subset Y$ be a morphism defining $\FF$, which is an isomorphism from a dense open subset $U \subset X$ onto a dense open subset $V \subset Y$.
		Let $\psi=\varphi^{-1}:V\subset Y\xrightarrow{\simeq} U\subset X$ be the inverse of $\varphi$.
		Let $\GG:Y \subset \PP_A^s \dashrightarrow X \subset \PP_A^r$ be the rational map defined by $(V,\psi)$. 
		
		Let $\text{Id}_X$ be the identity rational map on $X$ (\autoref{identity_map}).
		Take any representatives $\mathbf{f}=(f_0:\cdots:f_s)$ of $\FF$ and $\mathbf{g}=(g_0:\cdots:g_r)$ of $\GG$.
		Let $\GG \circ \FF$ be the composition of $\FF$ and $\GG$, i.e. the rational map defined by $(U, \psi \circ \varphi)$.
		Since $\psi \circ \varphi$ is the identity morphism on $U$, \autoref{def:rat_map} implies that the pair $(U, \psi \circ \varphi)$ gives the equivalence class of $\text{Id}_X$.
		Thus, one has $\text{Id}_X=\GG \circ \FF$, and by construction $\mathbf{g}(\mathbf{f})$ is a representative of $\GG\circ\FF$.

		$\text{(B3)} \Rightarrow \text{(B1)}$
		This is quite clear: take a representative $(f_0:\cdots:f_s)$ of $\FF$ and let $\GG$ and $(g_0:\cdots:g_r)$ be as in the assumption.
		Since the identity map of $X$ is defined by the representative $(x_0:\cdots:x_r)$, where $[R]_1=Ax_0+\cdots +Ax_r$ (see \autoref{identity_map}), then \autoref{lem_propor_representatives} yields the existence of nonzero (homogeneous) $h,h'\in R$ such that $h\cdot g_{i}(\mathbf{f})=h'\cdot x_{i}$, for $i=0,\ldots,r$.
		Then, for suitable $e\geq 0$,
		$$\frac{x_{i}}{x_0}=\frac{g_{i}(\mathbf{f})}{g_0(\mathbf{f})}=\frac{f_0^e\left(g_{i}(f_1/f_0,\ldots,f_m/f_0)\right)}{f_0^e\left(g_{0}(f_1/f_0,\ldots,f_m/f_0)\right)}
		=\frac{g_{i}(f_1/f_0,\ldots,f_m/f_0)}{g_{0}(f_1/f_0,\ldots,f_m/f_0)},\quad i=0,\ldots,r.$$
		This shows the reverse inclusion $K(X)\subset K(Y)$. 
	\end{proof}
\end{defprop}

\subsection{The graph of a rational map}

The tensor product $\AAA:=R\otimes_A A[\mathbf{y}]\simeq R[\mathbf{y}]$ has a natural structure of a standard bigraded $A$-algebra. Accordingly, the fiber product $\Proj(R) \times_A \PP_A^s$ has a natural structure of a biprojective scheme over $\Spec (A)$.
Thus, $\Proj(R) \times_A \PP_A^s=\biProj (\AAA)$.

The graph of a rational map $\FF:X=\Proj(R) \dashrightarrow \PP_A^s$ is a subscheme of this structure, in the following way:

\begin{defprop}\label{graph}\rm
	The {\em graph} of $\FF$ is equivalently defined as:
	\begin{enumerate}
		\item[{\rm (G1)}] The closure of the image of the morphism $(\iota,\varphi): U\rightarrow  X \times_A \PP_A^s$, where $\iota: U \hookrightarrow X$ is the natural inclusion and $\varphi:U\rightarrow \PP_A^s$ is a morphism from some (any) dense open subset defining $\FF$.
		\item[{\rm (G2)}] For some (any) representative $\mathbf{f}$ of $\FF$, the closure of the image of the morphism 
		$ \left(\iota,\Phi(\mathbf{f})\right) : D_+(\mathbf{f}) \longrightarrow X \times_A \PP_A^s,
		$
		where $\iota:D_+(\mathbf{f}) \hookrightarrow X$ is the natural inclusion and $\Phi(\mathbf{f}): D_+(\mathbf{f}) \rightarrow \PP_A^s$ is the $\mathbf{f}$-coordinate morphism.
		\item[{\rm (G3)}] $\biProj\left(\Rees_R(I)\right)$, where $I=(\mathbf{f})$ for some (any) representative $\mathbf{f}$ of $\FF$.
	\end{enumerate}
\end{defprop}
\begin{proof}
	The equivalence of (G1) and (G2) is clear, so one proceeds to show that (G2) and (G3) give the same scheme.
	Recall that, as in \autoref{eq:present_Rees_alg}, the Rees algebra of an ideal such as $I$ is a bigraded $\AAA$-algebra.
	The proof follows the same steps of the argument for the equivalence of (I2) and (I3) in the definition of the image of $\FF$ (cf. ~\autoref{image}).
	
	Let $\Gamma(\mathbf{f})$ denote the morphism as in (G2) and let $\mathfrak{G}\subset X\times_A\PP_A^s$ denote its schematic image.
	The underlying topological space of $\mathfrak{G}$ coincides with the closure of the image of $\Gamma(\mathbf{f})$.
	Then the ideal sheaf of $\mathfrak{G}$ is the kernel $\mathfrak{J}$ of the 
	corresponding homomorphism of ring sheaves
	\begin{equation}
	\label{eq:map_with_sheafs_graph_map}
	\OO_{X\times_A\PP_A^s}\rightarrow {\Gamma(\mathbf{f})}_* \OO_{D_+(\mathbf{f})}.
	\end{equation}
	Since the irrelevant ideal of $\AAA$ is $([R]_1)\cap(\yy)$, by letting $[R]_1=Ax_0+\cdots+Ax_r$ one can see that an affine open cover is given by $\Spec\left(\AAA_{(x_iy_j)}\right)$ for $0\le i\le r$ and $0\le j \le s$, where $\AAA_{(x_iy_j)}$ denotes the degree zero part of the bihomogeneous localization at powers of $x_iy_j$, to wit 
	\begin{equation}
	\label{eq:bihomogeneous_localiz}
	\AAA_{(x_iy_j)}=\Big\{\frac{g}{(x_iy_j)^\alpha} \mid g \in \AAA \;\text{ and }\;  \bideg(g)=(\alpha, \alpha) \Big\}.
	\end{equation}
	One has ${\OO_{X\times_A\PP_A^s}}\left({D_+(x_iy_j)}\right)=\AAA_{(x_iy_j)}$ and ${\left({\Gamma(\mathbf{f})}_* \OO_{D_+(\mathbf{f})}\right)}\left({D_+(x_iy_j)}\right)=R_{(x_if_j)}$, for $0\le i\le r$ and $0\le j \le s$.
	Then \autoref{eq:map_with_sheafs_graph_map} yields the exact sequence 
	$$
	0 \rightarrow {\mathfrak{J}}\left({D_+(x_iy_j)}\right) \rightarrow \AAA_{(x_iy_j)} \rightarrow R_{(x_if_j)}.
	$$	
	Let $\mathcal{J}$ be the kernel of the homomorphism of bigraded $\AAA$-algebras $\AAA\rightarrow \Rees_R(I) \subset R[t]$ given by $y_i\mapsto f_it$.
	The fact that ${\Rees_R(I)}_{(x_if_jt)} \simeq R_{(x_if_j)}$, yields the equality ${\mathfrak{J}}\left({D_+(x_iy_j)}\right)=\mathcal{J}_{(x_iy_j)}$.
	It follows that $\mathfrak{J}$ is the sheafification of $\mathcal{J}$.
	Therefore,  $\mathfrak{G} \simeq \biProj(\AAA/\mathcal{J}) \simeq \biProj(\Rees_R(I))$.
\end{proof}

\subsection{Saturated fiber cones over a domain}\label{sectio_on_saturation}

In this part one introduces the notion of a saturated fiber cone over a domain, by closely lifting from the ideas in \cite{MULTPROJ}.
As will be seen, the notion is strongly related to the degree and birationality of rational maps. 

For simplicity, assume that $R =A[\xx]=A[x_0,\ldots,x_r]$, a standard graded polynomial ring over $A$ and set $\KK:=\Quot(A), \mm=(x_0,\ldots,x_r)$.

The central object is the following graded $A$-algebra
$$
\sfib: = \bigoplus_{n=0}^\infty {\left[\big(I^n:\mm^\infty\big)\right]}_{nd},
$$
which one calls the {\em saturated fiber cone} of $I$.

Note the natural inclusion of graded $A$-algebras $\fib_R(I) \subset \sfib $.

For any $i\geq 0$, the local cohomology module $\HL^i(\Rees_R(I))$ has a natural structure of bigraded $\Rees_R(I)$-module, which comes out of the fact that 
$\HL^i(\Rees_R(I)) = \HH_{\mm\Rees_R(I)}^i(\Rees_R(I))$ (see also \cite[Lemma 2.1]{DMOD}).
In particular,  each $R$-graded part 
$$
{\left[\HL^i(\Rees_R(I))\right]}_j
$$
has a natural structure of graded $\fib_R(I)$-module.

Let $\Proj_{R\text{-gr}}(\Rees_R(I))$ denote the Rees algebra $\Rees_R(I)$ viewed as a ``one-sided'' graded $R$-algebra.	

\begin{lemma}
	\label{lem:props_special_fib_ring}
	With the above notation, one has:
	\begin{enumerate}[\rm (i)]
		\item There is an isomorphism of graded $A$-algebras 
		$$
		\sfib \,\simeq \, \HH^0\left(\Proj_{R\text{-gr}}(\Rees_R(I)), \OO_{\Proj_{R\text{-gr}}\left(\Rees_R(I)\right)}\right).
		$$
		\item $\sfib$ is a finitely generated graded $\fib_R(I)$-module.
		\item There is an exact sequence 
		$$
		0 \rightarrow \fib_R(I) \rightarrow \sfib \rightarrow {\left[\HL^1(\Rees_R(I))\right]}_0 \rightarrow 0
		$$
		of finitely generated graded $\fib_R(I)$-modules.
		\item If $A \rightarrow A^\prime$ is a flat ring homomorphism, then there is an isomorphism of graded $A'$-algebras
		$$
		\sfib \otimes_A A^\prime\simeq \widetilde{\mathfrak{F}_{R'}(IR^\prime)},
		$$
		where $R'=R \otimes_A A^\prime$.
	\end{enumerate}
	\begin{proof}
		(i) Since $\Rees_R(I)\;\simeq\; \bigoplus_{n=0}^\infty {I^n}(nd)$, by computing \v{C}ech cohomology with respect to the affine open covering ${\left(\Spec\left({\Rees_R(I)}_{(x_i)}\right)\right)}_{0\le i \le r}$ of ${\Proj_{R\text{-gr}}(\Rees_R(I))}$, one obtains 
		\begin{align*}
		\HH^0\left({\Proj_{R\text{-gr}}(\Rees_R(I))}, \OO_{{\Proj_{R\text{-gr}}(\Rees_R(I))}}\right)	&\simeq \bigoplus_{n\ge 0} \HH^0\left(\Proj(R),{(I^n)}^\sim(nd) \right)\\
		&=\bigoplus_{n=0}^\infty {\left[\big(I^n:\mm^\infty\big)\right]}_{nd} \quad \text{(\cite[Exercise II.5.10]{HARTSHORNE}).}\\
		&= \sfib.
		\end{align*}
		
		(ii) and  (iii) From \cite[Corollary 1.5]{HYRY_MULTIGRAD} (see also \cite[Theorem A4.1]{EISEN_COMM}) and the fact that $\HL^0(\Rees_R(I))=0$, there is an exact sequence 
		$$
		0 \rightarrow {\left[\Rees_R(I)\right]}_0 \rightarrow \HH^0({\Proj_{R\text{-gr}}(\Rees_R(I))}, \OO_{{\Proj_{R\text{-gr}}(\Rees_R(I))}})\simeq\sfib \rightarrow  {\left[\HL^1(\Rees_R(I))\right]}_0 \rightarrow 0
		$$
		of  $\fib_R(I)$-modules. 
		Now ${\left[\HL^1(\Rees_R(I))\right]}_0$ is a finitely generated module over $\fib_R(I)$ (see, e.g., \cite[Proposition 2.7]{MULTPROJ}, \cite[Theorem 2.1]{CHARDIN_POWERS_IDEALS}), thus implying that $\sfib$ is also finitely generated over $\fib_R(I)$.
		
		(iv) Since  $A \rightarrow A'$ is flat, base change yields 
		$$
		\HH^0\left(B, \OO_{B}\right) \simeq \HH^0\left({\Proj_{R\text{-gr}}(\Rees_R(I))}, \OO_{{\Proj_{R\text{-gr}}(\Rees_R(I))}}\right) \otimes_A A^\prime,
		$$
		where $B={\Proj_{R\text{-gr}}(\Rees_R(I))}\times_{A} A^\prime=\Proj_{R\text{-gr}}\left(\Rees_R(I) \otimes_A A^\prime\right)$.
		Also $\Rees_R(I) \otimes_A A^\prime \simeq \Rees_{R^\prime}(IR^\prime)$, by flatness, hence the result follows.
	\end{proof}
\end{lemma}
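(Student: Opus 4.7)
The plan is to prove the four statements in the order (i) $\Rightarrow$ (iii) $\Rightarrow$ (ii) $\Rightarrow$ (iv), with (ii) essentially a corollary of (iii) combined with a standard finite-generation input for graded local cohomology.

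\medskip

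For (i), I would use the open affine cover of $\Proj_{R\text{-gr}}(\Rees_R(I))$ by the spectra of the $R$-homogeneous localizations $\Rees_R(I)_{(x_i)}$, $0 \le i \le r$. Because $\Rees_R(I) = \bigoplus_{n \ge 0} I^n$ splits as an $R$-graded direct sum, the \v{C}ech complex with respect to this cover splits over $n$ and, on the summand of Rees-degree $n$, reduces to the one computing $\HH^0(\Proj(R), \widetilde{I^n}(nd))$. The latter equals $\bigl[(I^n : \mm^\infty)\bigr]_{nd}$ by the standard identification of global sections of a twisted ideal sheaf on projective space with the appropriate graded component of the saturation (cf.\ \cite[Exercise II.5.10]{HARTSHORNE}). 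Summing over $n$ and checking that the multiplicative structure inherited from $\Rees_R(I)$ matches the one on $\sfib$ coming from multiplication in $R$ yields the claimed isomorphism of graded $A$-algebras.

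\medskip

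For (iii), I would apply the standard four-term exact sequence relating a graded module to its zeroth and first local cohomologies at the irrelevant ideal and the global sections of its sheafification (as in \cite[Corollary 1.5]{HYRY_MULTIGRAD}) with $M = \Rees_R(I)$, $R$-graded. Since $I$ contains a regular element of $R$, the ideal $\mm\Rees_R(I)$ has positive grade on $\Rees_R(I)$ and $\HL^0(\Rees_R(I)) = 0$, so the sequence collapses in $R$-degree $0$ to
$$ 0 \to [\Rees_R(I)]_0 \to \HH^0\bigl(\Proj_{R\text{-gr}}(\Rees_R(I)), \OO\bigr) \to [\HL^1(\Rees_R(I))]_0 \to 0 .$$
Combined with (i) and the identification $[\Rees_R(I)]_0 \simeq \fib_R(I)$ recorded in \eqref{eq_isom_Rees_0_special_fiber}, this is exactly the asserted sequence; the maps are automatically $\fib_R(I)$-linear since the whole sequence lives in $R$-degree $0$.

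\medskip

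For (ii), the key input is that $[\HL^1(\Rees_R(I))]_0$ is a finitely generated $\fib_R(I)$-module, which is \cite[Proposition 2.7]{MULTPROJ} (see also \cite[Theorem 2.1]{CHARDIN_POWERS_IDEALS}). Since $\fib_R(I)$ is Noetherian, the exact sequence in (iii) exhibits $\sfib$ as an extension of two finitely generated $\fib_R(I)$-modules, hence finitely generated. Finally, (iv) follows from flat base change: $\Rees_R(I) \otimes_A A'$ coincides with $\Rees_{R'}(IR')$ because $A \to A'$ flat preserves both the ideal powers $I^n$ and their inclusion in $R[t]$; the base change $\Proj_{R\text{-gr}}(\Rees_R(I)) \times_A A' \simeq \Proj_{R'\text{-gr}}(\Rees_{R'}(IR'))$ together with flat base change for quasi-coherent $\HH^0$ then gives the claim via (i). The main obstacle, I expect, is (i): tracking the $R$-grading on \v{C}ech cochains carefully and verifying that the resulting ring structure on $\HH^0$ is the one on $\sfib$; everything else is a routine combination of the cited results.
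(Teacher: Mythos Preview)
Your proposal is correct and follows essentially the same route as the paper's own proof: the \v{C}ech computation for (i), the four-term sequence from \cite[Corollary 1.5]{HYRY_MULTIGRAD} together with $\HL^0(\Rees_R(I))=0$ for (iii), the finite-generation input \cite[Proposition 2.7]{MULTPROJ} for (ii), and flat base change for (iv) are exactly what the paper uses. Your added justification for the vanishing of $\HL^0$ and the remark about matching ring structures are slight elaborations, but the argument is the same.
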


Let $\FF:\PP_A^r \dashrightarrow \PP_A^s$ be a rational map with representative $\mathbf{f}=(f_0:\cdots:f_s)$.
Let $\GG:\PP_\KK^r\dashrightarrow \PP_\KK^s$ denote a rational map with representative $\mathbf{f}$, where each $f_i$ is considered as an element of $\KK[\xx]$.
	Set $I=(\mathbf{f})\subset R$.

\begin{remark}\rm
\label{rem:degree_rat_map_over_KK}
The rational map $\FF$ is generically finite if and only if the rational map $\GG$ is so, and one has the equality
$
\deg(\FF)=\deg\left(\GG\right).
$
In fact, let $Y$ and $Z$ be the images of $\FF$ and $\GG$, respectively. 
	Since $K(\PP_A^r)=R_{(0)}={\KK[\xx]}_{(0)}=K(\PP_\KK^r)$ and $K(Y)={A[\mathbf{f}]}_{(0)}={\KK[\mathbf{f}]}_{(0)}=K(Z)$, then the statement follows from \autoref{degree} and \autoref{prop:generically_finite}.
\end{remark} 
	
The following result is a simple consequence of \cite[Theorem 2.4]{MULTPROJ}.

\begin{theorem}
	Suppose that $\FF$ is generically finite. 
	Then, the following statements hold:
	\begin{enumerate}[\rm (i)]
		\item $\deg(\FF)=\left[\sfib: \fib_R(I)\right]$.
		\item $e\left(\sfib \otimes_A \KK\right)=\deg(\FF)\cdot e\Big(\fib_R(I) \otimes_A \KK\Big)$, where $e(-)$ stands for Hilbert-Samuel multiplicity.
		\item Under the additional condition of $\fib_R(I)$ being integrally closed, then $\FF$ is birational if and only if $\sfib=\fib_R(I)$.		
	\end{enumerate}
	\begin{proof}
		(i) Let $\GG:\PP_\KK^r \dashrightarrow \PP_\KK^s$ be the rational map as above.
		Since $A \hookrightarrow \KK$ is flat,  $\fib_R(I) \otimes_A \KK\cong \fib_{\KK[\xx]}(I\KK[\xx])$ and $\sfib \otimes_A \KK\cong \widetilde{\fib_{\KK[\xx]}(I\KK[\xx])}$ (\autoref{lem:props_special_fib_ring} (iv)).
		
		Thus from \cite[Theorem 2.4]{MULTPROJ}, one obtains $\deg(\GG)=\left[\sfib \otimes_A \KK: \fib_R(I) \otimes_A \KK\right]$.
		It is clear that $\left[\sfib \otimes_A \KK: \fib_R(I) \otimes_A \KK\right]=\left[\sfib: \fib_R(I)\right]$.
		Finally, \autoref{rem:degree_rat_map_over_KK} yields the equality $\deg(\FF)=\deg(\GG)$.
		
		(ii) It follows from the associative formula for multiplicity (see, e.g., \cite[Corollary 4.7.9]{BRUNS_HERZOG}).
		
		(iii) It suffices to show that, assuming that $\FF$ is birational onto its image and that $\fib_R(I)$ is integrally closed, then  $\sfib=\fib_R(I)$.
		 Since $\deg(\FF)=1$,   part (i) gives
		$$
		\Quot\left(\sfib\right) = \Quot\Big(\fib_R(I)\Big).
		$$	
		Since $\fib_R(I)$ is integrally closed and $\fib_R(I) \hookrightarrow \sfib$ is an integral extension (see \autoref{lem:props_special_fib_ring}(ii)), then $\sfib=\fib_R(I)$.
	\end{proof}
\end{theorem}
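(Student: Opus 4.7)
The plan is to reduce all three statements to the already-established results over a field, namely \cite[Theorem 2.4]{MULTPROJ}, by base-changing along the flat extension $A \hookrightarrow \KK=\Quot(A)$ and comparing with the rational map $\GG:\PP_\KK^r\dashrightarrow\PP_\KK^s$ induced by the same representative $\mathbf{f}$.

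For part (i), I would first invoke \autoref{lem:props_special_fib_ring}(iv), applied to the flat homomorphism $A\hookrightarrow\KK$, to obtain the isomorphisms $\fib_R(I)\otimes_A\KK\;\cong\;\fib_{\KK[\xx]}(I\KK[\xx])$ and $\sfib\otimes_A\KK\;\cong\;\widetilde{\fib_{\KK[\xx]}(I\KK[\xx])}$. The cited theorem then gives $\deg(\GG)=[\sfib\otimes_A\KK:\fib_R(I)\otimes_A\KK]$. Next, since $\sfib$ and $\fib_R(I)$ are $A$-subalgebras of $R$ (hence domains containing $\KK$ after localization), tensoring with $\KK$ does not change their quotient fields, so $[\sfib\otimes_A\KK:\fib_R(I)\otimes_A\KK]=[\sfib:\fib_R(I)]$. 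Finally, \autoref{rem:degree_rat_map_over_KK} gives $\deg(\FF)=\deg(\GG)$, which closes the loop.

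For part (ii), the extension $\fib_R(I)\otimes_A\KK\hookrightarrow\sfib\otimes_A\KK$ is finite by \autoref{lem:props_special_fib_ring}(ii) (finite generation is preserved by base change), and both rings are domains of the same Krull dimension, subalgebras of $\KK[\xx]$. Then the associativity formula for Hilbert-Samuel multiplicity, applied with only the zero minimal prime of the source algebra contributing, yields $e(\sfib\otimes_A\KK)=\mathrm{rank}\cdot e(\fib_R(I)\otimes_A\KK)$, where the rank is exactly the degree of the quotient field extension, which is $[\sfib:\fib_R(I)]=\deg(\FF)$ by part (i).

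Part (iii) is then a short corollary. If $\FF$ is birational, part (i) gives $[\sfib:\fib_R(I)]=1$, so $\sfib$ and $\fib_R(I)$ share a common quotient field; since $\fib_R(I)\hookrightarrow\sfib$ is integral (being module-finite by \autoref{lem:props_special_fib_ring}(ii)) and $\fib_R(I)$ is integrally closed by hypothesis, equality $\sfib=\fib_R(I)$ follows. The converse is immediate from part (i). The main conceptual obstacle — and the only subtle point I expect to need care — is justifying the identification of the rank of the module extension with the degree of the field extension when passing through the tensor product with $\KK$; once the flatness and base-change isomorphisms of \autoref{lem:props_special_fib_ring}(iv) are in place this reduces to a standard observation, but it is the step whose phrasing will require precision.
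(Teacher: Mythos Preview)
Your proposal is correct and follows essentially the same approach as the paper's proof: reduce to the field case via the flat base change $A\hookrightarrow\KK$ using \autoref{lem:props_special_fib_ring}(iv), invoke \cite[Theorem 2.4]{MULTPROJ} for part (i), apply the associativity formula for multiplicity for part (ii), and use integrality plus integral closedness for part (iii). The only cosmetic difference is that the paper dismisses the equality $[\sfib\otimes_A\KK:\fib_R(I)\otimes_A\KK]=[\sfib:\fib_R(I)]$ as ``clear'' where you (rightly) flag it as the point requiring a line of justification.
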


\section{Additional algebraic tools}
\label{sec:alg_tools}

In this section one gathers a few algebraic tools to be used in the specialization of rational maps. 
The section is divided in two subsections, and each subsection deals with a different theme that is important on its own.

\subsection{Grade of certain generic determinantal ideals}

One provides lower bounds for the grade of certain generic determinantal ideals with a view towards treating other such ideals by specialization (see \autoref{sec:applications}).

For this part, $R$ will exceptionally denote an arbitrary  Noetherian  ring.

The next lemma deals with a well-known generic deformation of an ideal in $R$ (see, e.g., \cite[Proposition 3.2]{ram1} for a similar setup).

\begin{lemma}
	\label{lem:ht_specialization}
	Let $n,m \ge 1$, $\mathbf{z}=\{z_{i,j}\} \, (1\le i \le n, 1\le j \le m)$ be indeterminates and  $S:=R[\mathbf{z}]$. 
	Given an $R$-ideal $I=(f_1,\ldots,f_m) \subset R$, introduce 
the $S$-ideal $J=(p_1,p_2,\ldots,p_n) \subset S$, where
	$$
	p_i=f_1z_{i,1}+	f_2z_{i,2}+\cdots +f_mz_{i,m}.
	$$
	Then  $\grade(J)\ge \min\{n,\grade(I)\}$.
	\begin{proof}
		Let $Q$ be a prime ideal containing $J$.
		If $Q$ contains all the $f_i$'s, then $\depth(S_Q)\ge\grade(I)$.
		Otherwise, say, $f_1 \not\in Q$.
		Then one can write
		$
		\frac{p_i}{f_1}=z_{i,1}+ \frac{f_2}{f_1}z_{i,2}+\cdots +\frac{f_m}{f_1}z_{i,m} \in R_{f_1}[\mathbf{z}]
		$
		as elements of the localization $S_{f_1}=R_{f_1}[\mathbf{z}]$.
		Since $\{z_{1,1},\ldots, z_{n,1}\}$ is a regular sequence in $R_{f_1}[\mathbf{z}]$, then so is the sequence $\{p_1/f_1,\ldots,p_n/f_1\}$. Then, clearly
		 $\{p_1,\ldots,p_n\}$ is a regular sequence in $R_{f_1}[\mathbf{z}]$, hence $\depth(S_Q) \ge \grade\left(JR_{f_1}[\mathbf{z}]\right)\ge n$.		
	\end{proof}
\end{lemma}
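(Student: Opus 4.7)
The plan is to bound the grade via the local characterization $\grade(J)=\min\{\depth(S_Q)\mid Q\in V(J)\}$, and for each prime $Q\supseteq J$ to show that $\depth(S_Q)\ge \min\{n,\grade(I)\}$. I would split into two cases depending on whether or not $Q$ contains the ambient ideal $I$.

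In the first case, assume $I\subseteq Q$, equivalently $f_1,\ldots,f_m\in Q$. Pick a maximal regular sequence $f_{i_1},\ldots,f_{i_g}$ contained in $I$, where $g=\grade(I)$. Since $S=R[\mathbf z]$ is a free (hence faithfully flat) $R$-module, this sequence remains regular in $S$, and since it already lies in $Q$, it remains regular in the localization $S_Q$. Hence $\depth(S_Q)\ge g=\grade(I)$.

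In the second case, some $f_j$, say $f_1$, does not lie in $Q$. Invert $f_1$: in $S_{f_1}=R_{f_1}[\mathbf z]$ one has $p_i/f_1 = z_{i,1}+(f_2/f_1)z_{i,2}+\cdots+(f_m/f_1)z_{i,m}$. The key observation is that the triangular change of variables $z_{i,1}\mapsto p_i/f_1$ (for $i=1,\ldots,n$), leaving the remaining $z_{i,j}$ fixed, is an $R_{f_1}$-algebra automorphism of $R_{f_1}[\mathbf z]$; consequently the elements $p_1/f_1,\ldots,p_n/f_1$ are part of a set of polynomial indeterminates over $R_{f_1}$ and therefore form a regular sequence in $S_{f_1}$. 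Since $f_1$ is a unit in $S_{f_1}$, the elements $p_1,\ldots,p_n$ themselves are a regular sequence there, and since $S_Q$ is a further localization of $S_{f_1}$ (as $f_1\notin Q$), the sequence remains regular in $S_Q$. This gives $\depth(S_Q)\ge n$, and combining the two cases finishes the argument.

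The only slightly delicate step is the change-of-variables trick in the second case: one needs to recognize that after inverting the unit $f_1$, the $p_i/f_1$ are genuine free variables in the polynomial ring over $R_{f_1}$. Everything else is standard flatness and localization formalism, so I expect no real obstacle beyond presenting this observation cleanly.
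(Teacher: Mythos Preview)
Your proof is correct and follows essentially the same route as the paper: split on whether $Q\supseteq I$, use that a regular sequence in $I$ stays regular in the faithfully flat extension $S$ and hence in $S_Q$, and in the other case invert $f_1$ and recognize the $p_i/f_1$ as new polynomial variables via a triangular change of coordinates. One small notational slip: writing the regular sequence as $f_{i_1},\ldots,f_{i_g}$ suggests it is chosen among the given generators, which need not be possible in general; you should instead take an arbitrary $R$-regular sequence $a_1,\ldots,a_g\in I$, and the rest of your argument goes through unchanged.
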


As an expected follow-up, one considers lower bounds for the grade of certain determinantal ideals. The proof of next proposition, provided  for the sake of completeness,  follows from the well-known routine of inverting/localizing at a suitable entry.

\begin{proposition}
	\label{prop:ht_gen_det_ideals}
	Let $r,s \ge 1$. For every $1\le j \le s$, 
	let $I_j=\left(f_{j,1},\ldots,f_{j,m_j}\right) \subset R$ be an $R$-ideal.
	Let $\mathbf{z}=\{z_{i,j,k}\}$ be indeterminates, with indices ranging over $1\le i \le r$, $1\le j \le s$ and $1 \le k \le m_j$.
 Setting $S:=R[\mathbf{z}]$, consider the $r\times s$ matrix 
	$$
	M = \left( \begin{array}{cccc}
	p_{1,1} & p_{1,2} & \cdots & p_{1,s} \\
	p_{2,1} & p_{2,2} & \cdots & p_{2,s}\\
	\vdots & \vdots & & \vdots\\
	p_{r,1} & p_{r,2} & \cdots & p_{r,s}\\
	\end{array}
	\right)$$	
	where 	$p_{i,j} = f_{j,1}z_{i,j,1}  + f_{j,2}z_{i,j,2}  + \cdots + f_{j,m_j}z_{i,j,m_j}\in S$, for all $i,j$.
	Then 
	$$
	\grade\big(I_t(M)\big) \ge \min\{r-t+1,g\},
	$$
	where $1 \le t \le \min\{r,s\}$ and $g=\min_{1\le j \le s}\{\grade(I_j)\}$.
	\begin{proof}
		Proceed by induction on $t$. 
		The case $t=1$ follows from \autoref{lem:ht_specialization} since $I_1(M)$ is generated by the $p_{i,j}$'s themselves.
		
		Now suppose that $1 < t \le \min\{r,s\}$.
		Let $Q$ be a prime ideal containing $I_t(M)$.
		If $Q$ contains all the polynomials $p_{i,j}$, then again \autoref{lem:ht_specialization} yields $\depth(S_{Q}) \ge \min\{r,g\} \ge \min\{r-t+1,g\}$.
		Otherwise, say, $p_{r,s}\not\in Q$.
		
		Let $M^\prime$ denote the $(r-1)\times(s-1)$ submatrix of $M$ of the first $r-1$ rows and first $s-1$ columns. 
		 Clearly, 
		$
		I_{t-1}\left(M^\prime\right)S_{p_{r,s}} \,\subset\, I_{t}\left(M\right)S_{p_{r,s}}
		$
		in the localization  $S_{p_{r,s}}$.
		The inductive hypothesis gives 
		\begin{align*}
		\depth(S_Q) &\ge 
		\grade\left(I_{t}\left(M\right)S_{p_{r,s}}\right)	\\
		&\ge \grade\left(I_{t-1}\left(M^\prime\right)S_{p_{r,s}}\right)\\
		&\ge 	\grade\left(I_{t-1}\left(M^\prime\right)\right) \\
		&\ge \min\{(r-1)-(t-1)+1,g\}.
		\end{align*}
		Therefore, $\depth(S_Q)\ge \min\{r-t+1,g\}$ as was to be shown.
	\end{proof}
\end{proposition}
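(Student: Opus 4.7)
The plan is to proceed by induction on $t$, with $t=1$ as base case. For $t=1$, the ideal $I_1(M)$ contains, for each column index $j$, the ideal $(p_{1,j},\ldots,p_{r,j})$ generated by a single ``column'' of generic linear combinations of the $f_{j,k}$. By \autoref{lem:ht_specialization}, this column ideal has grade at least $\min\{r,\grade(I_j)\}$, and monotonicity of grade under inclusion yields $\grade(I_1(M)) \ge \min\{r,g\} = \min\{r-1+1,g\}$.

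For the inductive step with $t \ge 2$, I take any prime $Q \subset S$ containing $I_t(M)$ and aim to bound $\depth(S_Q)$ from below. If $Q$ contains every entry $p_{i,j}$ of $M$, then $Q \supseteq I_1(M)$ and the base case already gives $\depth(S_Q) \ge \min\{r,g\} \ge \min\{r-t+1,g\}$. Otherwise some entry $p_{i_0,j_0}$ lies outside $Q$; after permuting rows and columns, which leaves the ideal of $t$-minors invariant, I may assume $p_{r,s} \notin Q$, so that $Q$ survives in the localization $S_{p_{r,s}}$, and I can work there.

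In $S_{p_{r,s}}$ I would use elementary row and column operations to clear the last row and last column of $M$ away from the $(r,s)$-entry, producing a block matrix of the form $\bigl(\begin{smallmatrix} \widetilde{M} & 0 \\ 0 & p_{r,s} \end{smallmatrix}\bigr)$ whose top-left block is the Schur complement $\widetilde{M}$ of size $(r-1) \times (s-1)$ with entries $\widetilde{p}_{i,j} = p_{i,j} - p_{i,s}p_{r,j}/p_{r,s}$. Since row and column operations preserve ideals of minors and $p_{r,s}$ is a unit, one obtains the equality $I_t(M)S_{p_{r,s}} = I_{t-1}(\widetilde{M})S_{p_{r,s}}$. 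The essential point is that each entry of $\widetilde{M}$ can be rewritten as $\widetilde{p}_{i,j} = \sum_k f_{j,k}z'_{i,j,k}$ for new variables $z'_{i,j,k} := z_{i,j,k} - (p_{i,s}/p_{r,s})z_{r,j,k}$; the substitution $z_{i,j,k} \mapsto z'_{i,j,k}$ for $(i,j)$ with $1 \le i \le r-1$ and $1 \le j \le s-1$ is an invertible affine automorphism of $S_{p_{r,s}}$, so that $\widetilde{M}$ is once again a matrix of the form prescribed by the proposition, with $r-1$ rows, $s-1$ columns, and ideals $I_1,\ldots,I_{s-1}$ over a larger polynomial base (adjoining the remaining $z$-variables to $R$ and inverting $p_{r,s}$).

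The inductive hypothesis applied to $\widetilde{M}$ then yields $\grade(I_{t-1}(\widetilde{M})) \ge \min\{(r-1)-(t-1)+1,g'\} = \min\{r-t+1,g'\}$, where $g' := \min_{j \le s-1}\grade(I_j) \ge g$. Because grade is preserved under polynomial extension and non-decreasing under localization by a non-zero-divisor, this bound transfers to $S_{p_{r,s}}$, and combined with $I_t(M)S_{p_{r,s}} = I_{t-1}(\widetilde{M})S_{p_{r,s}}$ gives $\depth(S_Q) \ge \min\{r-t+1,g\}$. The main obstacle is the Schur-complement reduction: one must verify carefully that the literal top-left submatrix does not suffice (its $(t-1)$-minors do not in general lie in $I_t(M)S_{p_{r,s}}$, as small examples confirm) and that the change of variables $z_{i,j,k} \mapsto z'_{i,j,k}$ genuinely recasts $\widetilde{M}$ as an instance of the inductive hypothesis, without disturbing the grade bounds for the ideals $I_j$ in the extended base.
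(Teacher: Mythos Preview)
Your proof is correct and shares the paper's inductive skeleton: induction on $t$, base case via \autoref{lem:ht_specialization}, and for $t\ge 2$ a case split on whether $Q$ contains every entry, followed by localization at an entry $p_{r,s}\notin Q$ and reduction to an $(r-1)\times(s-1)$ matrix. The substantive difference is \emph{which} smaller matrix is used. The paper takes the literal top-left submatrix $M'$ and asserts ``clearly $I_{t-1}(M')S_{p_{r,s}}\subset I_t(M)S_{p_{r,s}}$''; you instead pass to the Schur complement $\widetilde{M}$, for which one has the equality $I_t(M)S_{p_{r,s}}=I_{t-1}(\widetilde{M})S_{p_{r,s}}$. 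Your caution is warranted: the paper's inclusion fails already for a generic $2\times 2$ matrix (take $R=\kk$, each $m_j=1$, $f_{j,1}=1$, so the $p_{i,j}$ are independent variables; then $p_{1,1}\notin(\det M)S_{p_{2,2}}$ because $\det M$ is irreducible of degree $2$). Your change of variables $z_{i,j,k}\mapsto z'_{i,j,k}$ is an automorphism of $S_{p_{r,s}}$ carrying $\widetilde{M}$ entrywise to $M'$; this both recasts $\widetilde{M}$ in the form required by the inductive hypothesis and, incidentally, shows $\grade(I_t(M)S_{p_{r,s}})=\grade(I_{t-1}(M')S_{p_{r,s}})$. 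So the paper's chain of inequalities is ultimately valid, but its one-word justification is not, and your Schur-complement argument supplies the missing step.
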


\subsection{Local cohomology of bigraded algebras}

The goal of this subsection is to establish upper bounds for the dimension of certain graded parts of the local cohomology of a finitely generated module over a bigraded algebra.
Analogous bounds are also derived for the sheaf cohomologies of the corresponding sheafified module.
The bounds are far reaching generalizations of \cite[Proposition 3.1]{MULTPROJ}, which has been  useful under various situations (see, e.g., \cite[Proof of Theorem 3.3]{MULTPROJ}, \cite[Proof of Theorem A]{MULT_SAT_PERF_HT_2}). 

The following setup will prevail along this part only. 

\begin{setup}
	Let $\kk$ be a field.
	Let $\AAA$ be a standard bigraded $\kk$-algebra, i.e., $\AAA$ can be generated over $\kk$ by the elements of bidegree $(1,0)$ and $(0,1)$. 
	Let $R$ and $S$ be the standard graded $\kk$-algebras given by $R={\left[\AAA\right]}_{(*,0)}=\bigoplus_{j\ge 0} {\left[\AAA\right]}_{j,0}$ and $S={\left[\AAA\right]}_{(0,*)}=\bigoplus_{k\ge 0} {\left[\AAA\right]}_{0,k}$, respectively.
	One sets $\mm = R_+ = \bigoplus_{j>0} {\left[\AAA\right]}_{j,0}$ and $\nnn = S_+ = \bigoplus_{k>0} {\left[\AAA\right]}_{0,k}$.
\end{setup}

Let $\bmm$ be a bigraded module over $\AAA$.
Denote by ${\left[\bmm\right]}_j$ the ``one-sided'' $R$-graded part
\begin{equation}
	\label{eq_one_side_grading}
	{\left[\bmm\right]}_j = \bigoplus_{k\in \ZZ
	} {\left[\bmm\right]}_{j,k}.
\end{equation}
Then, ${\left[\bmm\right]}_{j}$ has a natural structure of graded $S$-module, and its $k$-th graded part is given by 
$
{\left[{\left[\bmm\right]}_j\right]}_k={\left[\bmm\right]}_{j,k}.
$
Note that, for any $i\geq 0$, the local cohomology module $\HL^i(\bmm)$ has a natural structure of bigraded $\AAA$-module, and this can be seen from the fact that 
$\HL^i(\bmm) = \HH_{\mm\AAA}^i(\bmm)$ (also, see \cite[Lemma 2.1]{DMOD}).
In particular,  each $R$-graded part 
$
{\left[\HL^i(\bmm)\right]}_j
$
has a natural structure of graded $S$-module.

By considering $\AAA$ as a ``one-sided'' graded $R$-algebra, one gets the projective scheme $\Proj_{R\text{-gr}}(\AAA)$.
The sheafification of $\bmm$, denoted by $\widetilde{\bmm}$, yields a quasi-coherent $\OO_{\Proj_{R\text{-gr}}(\AAA)}$-module.

For any finitely generated bigraded $\AAA$-module $\bmm$,  ${\left[\HL^i(\bmm)\right]}_j$ and $\HH^i\left(\Proj_{R\text{-gr}}(\AAA), \widetilde{\bmm}(j)\right)$ are finitely generated graded $S$-modules for any $i\ge 0,j\in \ZZ$ (see, e.g., \cite[Proposition 2.7]{MULTPROJ}, \cite[Theorem 2.1]{CHARDIN_POWERS_IDEALS}).

The next theorem contains the main result of this subsection.

\begin{theorem}
	\label{thm:dim_cohom_bigrad}
	Let $\bmm$ be a finitely generated bigraded $\AAA$-module. 
	Then, the following inequalities hold 
	\begin{enumerate}[\rm (i)]
		\item $
		\dim\left({\left[\HL^i(\bmm)\right]}_j\right) \le \min\{\dim(\bmm) - i, \dim(S) \}
		$,
		\item $\dim\left(\HH^i\left(\Proj_{R\text{-gr}}(\AAA), \widetilde{\bmm}(j)\right)\right) \le \min\{\dim(\bmm) - i-1, \dim(S) \}$
	\end{enumerate}
	for all $i\ge 0$ and $j\in \ZZ$.
	\begin{proof}
		Let $d=\dim(\bmm)$.
		Since ${\left[\HL^i(\bmm)\right]}_j$ and $\HH^i\left(\Proj_{R\text{-gr}}(\AAA), \widetilde{\bmm}(j)\right)$ are finitely generated $S$-modules, it is clear that 
		$\dim\left({\left[\HL^i(\bmm)\right]}_j\right) \le \dim(S)$ and $\dim\left(\HH^i\left(\Proj_{R\text{-gr}}(\AAA), \widetilde{\bmm}(j)\right)\right)\le \dim(S)$.
		
		(i)		
		By the well-known Grothendieck Vanishing Theorem (see, e.g., \cite[Theorem 6.1.2]{Brodmann_Sharp_local_cohom}), $\HL^i(\bmm)=0$  for $i > d$, so that one takes $i \le d$. 
				
		Proceed by induction on $d$.
		
		Suppose that $d=0$.
		Then ${[\bmm]}_{j,k}=0$ for $k\gg 0$. 
		Since ${\left[\HL^0(\bmm)\right]}_j \subset {[\bmm]}_j$, one has  ${\left[{\left[\HL^0(\bmm)\right]}_j\right]}_k={\left[\HL^0(\bmm)\right]}_{j,k}=0$ for $k \gg 0$.
		Thus, $\dim\left({\left[\HL^0(\bmm)\right]}_j\right)=0$. 
		
		Suppose that $d>0$.
		There exists a finite filtration 
		$
		0 = \bmm_0 \subset \bmm_1 \subset \cdots \subset \bmm_n=\bmm
		$
		of $\bmm$ such that $\bmm_l/\bmm_{l-1} \cong \left(\AAA/\pp_l\right)(a_l,b_l)$ where $\pp_l \subset \AAA$ is a bigraded prime ideal with dimension $\dim(\AAA/\pp_l)\le d$ and $a_l,b_l \in \ZZ$.
		The short exact sequences
		\begin{equation*}
			0 \rightarrow \bmm_{l-1} \rightarrow \bmm_l \rightarrow \left(\AAA/\pp_l\right)(a_l,b_l) \rightarrow 0		
		\end{equation*}
		induce the following long exact sequences in local cohomology 
		\begin{equation*}
			{\left[\HL^i\left(\bmm_{l-1}\right)\right]}_j \rightarrow {\left[\HL^i\left(\bmm_{l}\right)\right]}_j \rightarrow {\left[\HL^i\left(\left(\AAA/\pp_l\right)(a_l,b_l)\right)\right]}_j.  
		\end{equation*}
		By iterating on $l$, one gets
		\begin{equation*}
			\dim\left({\left[\HL^i(\bmm)\right]}_j\right) \le \max_{1\le l \le n} \Big\{ \dim\left({\left[\HL^i\left(\left(\AAA/\pp_l\right)(a_l,b_l)\right)\right]}_j \right) \Big\}.		
		\end{equation*}
		If $\pp_l \supseteq \nnn\AAA$ then  $\AAA/\pp_l$ is a quotient of $\AAA/\nnn\AAA\cong R$ and this implies that 
		$$
		{\left[{\left[\HL^i\left(\AAA/\pp_l\right)\right]}_j\right]}_k={\left[\HL^i\left(\AAA/\pp_l\right)\right]}_{j,k}=0
		$$ 
		for $k\neq 0$.
		Thus, one can assume that $\pp_l \not\supseteq \nnn\AAA$.
		
	Alongside with the previous reductions, one can then assume that $\bmm=\AAA/\pp$ where $\pp$ is a bigraded prime ideal and $\pp \not\supseteq \nnn\AAA$.
		In this case there exists an homogeneous element $y \in {\left[S\right]}_1$ such that $y \not\in \pp$. 
		The short exact sequence 
		\begin{equation*}
			0 \rightarrow \left(\AAA/\pp\right)(0,-1) \xrightarrow{y}  \AAA/\pp \rightarrow  \AAA/(y,\pp) \rightarrow 0
		\end{equation*}		
		yields the long exact sequence in local cohomology 
		\begin{equation*}
			{\left[\HL^{i-1}\left(\AAA/(y,\pp)\right)\right]}_j \rightarrow \left({\left[\HL^{i}\left(\AAA/\pp\right)\right]}_j\right)(-1) \xrightarrow{y} {\left[\HL^{i}\left(\AAA/\pp\right)\right]}_j \rightarrow {\left[\HL^{i}\left(\AAA/(y,\pp)\right)\right]}_j. 
		\end{equation*}
		Therefore, it follows that 
		\begin{equation*}
			\dim\left({\left[\HL^i(\AAA/\pp)\right]}_j\right) \le \max\Big\{ \dim\left({\left[\HL^{i-1}\left(\AAA/(y,\pp)\right)\right]}_j\right), 1+ \dim\left({\left[\HL^{i}\left(\AAA/(y,\pp)\right)\right]}_j\right)\Big\}.
		\end{equation*}
		Since $\dim\left(\AAA/(y,\pp)\right)\le d-1$, the induction hypothesis gives
		$$
		\dim\left({\left[\HL^{i-1}\left(\AAA/(y,\pp)\right)\right]}_j\right) \le (d-1)-(i-1) = d-i
		$$
		and 
		$$
		1+\dim\left({\left[\HL^{i}\left(\AAA/(y,\pp)\right)\right]}_j\right) \le 1+(d-1)-i=d-i.
		$$
		Therefore, $\dim\left({\left[\HL^i(\AAA/\pp)\right]}_j\right)\le d-i$, as meant to be shown.
		
		(ii) 
		For $i\ge 1$, the isomorphism $\HH^i\left(\Proj_{R\text{-gr}}(\AAA), \widetilde{\bmm}(j)\right) \simeq {\left[\HL^{i+1}(\bmm)\right]}_j$ (see, e.g., \cite[Corollary 1.5]{HYRY_MULTIGRAD}, \cite[Theorem A4.1]{EISEN_COMM}) and part (i) imply the result.
		So, $i=0$ is the only remaining case.
		
		Let $\mathbb{M}^\prime=\mathbb{M}/\HH_\mm^0(\mathbb{M})$ and $\AAA^\prime = \AAA / \Ann_\AAA(\mathbb{M}^\prime)$.
		One has the following short exact sequence
		$$
		0 \rightarrow {\left[\mathbb{M}^\prime\right]}_j \rightarrow \HH^0\left(\Proj_{R\text{-gr}}(\AAA), \widetilde{\bmm}(j)\right) \rightarrow {\left[\HL^1(\bmm)\right]}_j \rightarrow 0.
		$$
		(see, e.g., \cite[Corollary 1.5]{HYRY_MULTIGRAD}, \cite[Theorem A4.1]{EISEN_COMM}).
		From part (i), one has the inequality $\dim\left({\left[\HL^1(\bmm)\right]}_j\right) \le \dim(\mathbb{M})-1$.
		
		So, it is enough to show that $\dim\left({\left[\mathbb{M}^\prime\right]}_j\right)\le \dim(\mathbb{M})-1$.
		If $\mathbb{M}^\prime=0$, the result is clear.
		Hence, assume that $\mathbb{M}^\prime\neq0$.
		Note that $\grade(\mm \AAA^\prime)>0$, and so  $\dim\left(\AAA^\prime/\mm \AAA^\prime\right)\le \dim(\AAA^\prime)-1 \le \dim\left(\mathbb{M}\right)-1$.
		Since $\AAA^\prime={\left[\AAA^\prime\right]}_0 \oplus \mm \AAA^\prime$, there is an isomorphism ${\left[\AAA^\prime\right]}_0 \simeq \AAA^\prime/\mm \AAA^\prime$ of graded $\kk$-algebras. 
		Therefore, the result follows because ${\left[\mathbb{M}^\prime\right]}_j$ is a finitely generated module over ${\left[\AAA^\prime\right]}_0$.
	\end{proof}
\end{theorem}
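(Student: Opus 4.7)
The plan is to prove (i) first and then deduce (ii) via the standard comparison between sheaf cohomology on $\Proj_{R\text{-gr}}(\AAA)$ and the graded pieces of local cohomology supported at $\mm$.

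For part (i), the bound $\dim\left({\left[\HL^i(\bmm)\right]}_j\right) \le \dim(S)$ is immediate because ${\left[\HL^i(\bmm)\right]}_j$ is a finitely generated graded $S$-module (a fact cited from \cite{MULTPROJ} just before the statement). The nontrivial inequality is $\dim\left({\left[\HL^i(\bmm)\right]}_j\right)\le \dim(\bmm)-i$. I would first invoke Grothendieck vanishing to dispose of $i>\dim(\bmm)$ and then proceed by induction on $d=\dim(\bmm)$. For $d=0$, the module $\bmm$ is supported at the maximal bigraded ideal, so ${[\bmm]}_{j,k}=0$ for $k\gg 0$, hence ${\left[\HL^0(\bmm)\right]}_j\subseteq{[\bmm]}_j$ has zero Krull dimension as an $S$-module.

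For the inductive step, I would use a bigraded prime filtration $0=\bmm_0\subset\cdots\subset\bmm_n=\bmm$ with successive quotients isomorphic to shifts of $\AAA/\pp_l$ and the long exact sequence in local cohomology to reduce to the case $\bmm=\AAA/\pp$ for a single bigraded prime $\pp$. If $\pp\supseteq\nnn\AAA$, then $\AAA/\pp$ is a quotient of $R$, so its $R$-graded pieces have trivial $S$-structure and the conclusion is clear. Otherwise I pick a homogeneous element $y\in[S]_1$ with $y\notin\pp$ and use the short exact sequence
\begin{equation*}
0\to(\AAA/\pp)(0,-1)\xrightarrow{y}\AAA/\pp\to\AAA/(y,\pp)\to 0,
\end{equation*}
whose associated long exact sequence in $\HL^\bullet$, taken in the $R$-degree $j$ strand, yields
\begin{equation*}
\dim\left({\left[\HL^i(\AAA/\pp)\right]}_j\right)\le\max\Big\{\dim\left({\left[\HL^{i-1}(\AAA/(y,\pp))\right]}_j\right),\,1+\dim\left({\left[\HL^i(\AAA/(y,\pp))\right]}_j\right)\Big\}.
\end{equation*}
Since $\dim(\AAA/(y,\pp))\le d-1$, the induction hypothesis makes both terms on the right at most $d-i$.

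For part (ii), I would separate $i\ge 1$ from $i=0$. For $i\ge 1$, I would apply the isomorphism $\HH^i(\Proj_{R\text{-gr}}(\AAA),\widetilde{\bmm}(j))\simeq{\left[\HL^{i+1}(\bmm)\right]}_j$ \cite{HYRY_MULTIGRAD,EISEN_COMM} and plug in part (i), gaining the extra $-1$. For $i=0$, I would use the four-term sequence $0\to\HL^0(\bmm)\to\bmm\to\bigoplus_j\HH^0(\Proj_{R\text{-gr}}(\AAA),\widetilde{\bmm}(j))\to\HL^1(\bmm)\to 0$: in $R$-degree $j$ it gives a short exact sequence whose right-hand term is bounded by part (i) and whose left-hand term is bounded by showing that $\bmm'=\bmm/\HL^0(\bmm)$ has $\dim[\bmm']_j\le\dim(\bmm)-1$ as an $S$-module. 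For the latter, I would let $\AAA'=\AAA/\Ann_\AAA(\bmm')$, observe $\grade(\mm\AAA')>0$, and use the isomorphism $[\AAA']_0\simeq\AAA'/\mm\AAA'$ together with finite generation of $[\bmm']_j$ over $[\AAA']_0$.

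The main obstacle is the inductive step in (i): one must carefully use a bigraded element $y\in[S]_1$ (rather than an arbitrary element), and one must verify that the prime filtration can be chosen compatibly with the bigrading, so that the short exact sequences stay in the bigraded category and the long exact sequence of local cohomology remains an exact sequence of graded $S$-modules after taking the $R$-degree $j$ strand. The separate treatment of primes containing $\nnn\AAA$ is essential, since otherwise no such $y$ exists.
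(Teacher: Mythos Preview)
Your proposal is correct and follows essentially the same route as the paper: the same induction on $\dim(\bmm)$ via a bigraded prime filtration and the multiplication-by-$y$ sequence for (i), and the same split into $i\ge 1$ (via the local--sheaf cohomology isomorphism) and $i=0$ (via the four-term sequence and the bound on $[\bmm']_j$ through $\AAA'/\mm\AAA'$) for (ii). The caveats you flag about choosing $y\in[S]_1$ and handling primes containing $\nnn\AAA$ separately are exactly the points the paper addresses.
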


Of particular interest is the following corollary that generalizes \cite[Proposition 3.1]{MULTPROJ}.

\begin{corollary}
	Let $(R,\mm)$ be a standard graded algebra over a field with graded irrelevant ideal $\mm$.
	For any ideal $I\subset \mm$ one has 
	$$
	\dim\left({\left[\HL^i(\Rees_R(I))\right]}_j\right) \le \dim(R)+1-i
	$$ 
	and 
	$$
	\dim\left(\HH^i\left(\Proj_{R\text{-gr}}\left(\Rees_R(I)\right),\OO_{{\Proj_{R\text{-gr}}(\Rees_R(I))}}(j)\right)\right) \le \dim(R)-i
	$$
	for any $i\ge 0, j \in \ZZ$.
	\begin{proof}
		It follows from \autoref{thm:dim_cohom_bigrad} and the fact that $\dim\left(\Rees_R(I)\right)\le\dim(R)+1$.
	\end{proof}
\end{corollary}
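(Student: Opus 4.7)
The plan is to apply \autoref{thm:dim_cohom_bigrad} directly, taking $\AAA := \Rees_R(I)$ and $\bmm := \Rees_R(I)$ itself. First I would verify that the Rees algebra fits the bigraded setup of that theorem. Using the bigrading recalled in \autoref{eq_bigrad_Rees_alg}, namely ${[\Rees_R(I)]}_{c,n} = {[I^n]}_{c+nd}\, t^n$ (with $d$ the common degree of generators of $I$), one has that $\Rees_R(I)$ is standard bigraded over $\kk$: the generators are the degree-one forms of $R$, of bidegree $(1,0)$, together with the $f_i t$, of bidegree $(0,1)$. Moreover ${[\Rees_R(I)]}_{(*,0)} = R$, so the ``one-sided'' $R$-graded parts ${[\HL^i(\Rees_R(I))]}_j$ and the scheme $\Proj_{R\text{-gr}}(\Rees_R(I))$ appearing in the corollary are exactly the objects to which \autoref{thm:dim_cohom_bigrad} pertains, and the sheafification of $\Rees_R(I)$ on $\Proj_{R\text{-gr}}(\Rees_R(I))$ is $\OO_{\Proj_{R\text{-gr}}(\Rees_R(I))}$.

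Next I would record the elementary dimension bound $\dim\left(\Rees_R(I)\right) \le \dim(R) + 1$. This is immediate from the inclusion $\Rees_R(I) \subset R[t]$ and the standard equality $\dim(R[t]) = \dim(R) + 1$ for Noetherian $R$.

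With these two ingredients in hand, part (i) of the corollary follows at once from \autoref{thm:dim_cohom_bigrad}(i):
$$
\dim\left({[\HL^i(\Rees_R(I))]}_j\right) \le \dim(\Rees_R(I)) - i \le \dim(R) + 1 - i.
$$
Similarly, part (ii) follows from \autoref{thm:dim_cohom_bigrad}(ii) applied to $\bmm = \Rees_R(I)$, yielding
$$
\dim\left(\HH^i\left(\Proj_{R\text{-gr}}(\Rees_R(I)),\OO_{\Proj_{R\text{-gr}}(\Rees_R(I))}(j)\right)\right) \le \dim(\Rees_R(I)) - i - 1 \le \dim(R) - i.
$$

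There is no real obstacle; all the heavy lifting is already done in \autoref{thm:dim_cohom_bigrad}. The only item that warrants a moment's care is the compatibility of gradings, i.e.\ checking that the ``bigraded standard $\kk$-algebra'' framework of that theorem is genuinely realized by $\Rees_R(I)$ under the bigrading of \autoref{eq_bigrad_Rees_alg}, and that the $R$-graded sheafification indicated in the corollary matches the one in the theorem.
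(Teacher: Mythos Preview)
Your proposal is correct and follows exactly the paper's approach: apply \autoref{thm:dim_cohom_bigrad} with $\bmm=\AAA=\Rees_R(I)$ together with the standard bound $\dim(\Rees_R(I))\le\dim(R)+1$. The paper's proof is the one-line version of what you wrote; you simply made explicit the verification that $\Rees_R(I)$ carries the required standard bigrading and that the $R$-graded sheafification matches.
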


\section{Specialization}
\label{sec:specialization}

In this section one studies how the process of specializing Rees algebras and saturated fiber cones affects the degree of specialized rational maps, where the latter is understood in terms of coefficient specialization.

The following notation will take over throughout this section.

\begin{setup}\label{NOTATION_GENERAL1}\rm
	Essentially keep the basic notation as in \autoref{sec:rat_maps}, but this time around take $A=\kk[z_1,\ldots,z_m]$ to be a polynomial ring over a field $\kk$ (for the present purpose forget any grading).
	Consider a rational map  $\GG :  \PP_A^r \dashrightarrow \PP_A^s$ given by a representative $\mathbf{g}=(g_0:\cdots:g_s)$, where $\PP_A^r={\rm Proj}(R)$ with $R= A[\xx]=A[x_0,\ldots,x_r]$.
	Fix a maximal ideal $\nnn=(z_1-\alpha_1, \ldots, z_m-\alpha_m)$ of $A$ where $\alpha_i \in \kk$.
	
	Clearly, one has $\kk\simeq A/\nnn$.
	One views the structure of $A$-module of $\kk$ as coming from the natural homomorphism $A \twoheadrightarrow  A/\nnn \simeq \kk$.
	Thus, one gets
	$$
	X \times_{A} \kk \simeq X \times_{A} (A/\nnn) \qquad\text{ and } \qquad M \otimes_{A} \kk \simeq M \otimes_{A} (A/\nnn),
	$$
	for any $A$-scheme $X$ and any $A$-module $M$.
	
	Then $R/\nnn R \simeq \kk[x_0,\ldots,x_r]$.
	Let $\bgg$ denote the rational map $\bgg: \PP_\kk^r \dashrightarrow \PP_\kk^s$ with representative
	$\overline{\mathbf{g}}=(\overline{g_0}:\cdots:\overline{g_s})$,
	where $\overline{g_i}$ is the image of $g_i$ under the natural map $R \twoheadrightarrow R/\nnn R$.
Further assume that $\overline{g_i} \neq 0$ for all $0\le i \le s$.

	Finally, denote $\I:=(g_0,\ldots,g_s) \subset R$ and $I:=(\I,\nnn)/\nnn
	=(\overline{g_0},\ldots,\overline{g_s}) \subset R/\nnn R$.
\end{setup}

	Recall that if $X$ stands for a topological space, a property of the points of $X$ is said to hold generically on $X$ provided it holds for every point in a dense open subset of $X$.

\begin{remark}\rm
	\label{rem:zariski_topology_affine}
	In the above setup, suppose in addition that $\kk$ is algebraically closed. 
	One can go back and forth between $\kk^m$ and a naturally corresponding subspace of $\Spec(A)$, with $A=\kk[z_1,\ldots,z_m]$.
	Namely, consider 
	$
	\MaxSpec(A):=\big\{ \pp \in \Spec(A) \mid \pp \text{ is maximal}  \big\}
	$
	with the induced Zariski topology of $\Spec(A)$.
	Then, by Hilbert's Nullstellensatz, the natural association 
	$(a_1,\ldots,a_m)\mapsto (z_1-a_1,\ldots,z_m-a_m)$
	yields a homeomorphism of  $\kk^m$ onto $\MaxSpec(A)$.
	In particular, it preserves dense open subsets and a dense open subset in $\Spec(A)$ restricts to a dense open subset of $\MaxSpec(A)$ naturally identified with a dense open subset  of $\kk^m$.
	This implies that if a property holds generically on points of $\Spec(A)$ then it holds generically on points of $\kk^m$.
\end{remark}

\subsection{Algebraic lemmata}

In this part one establishes the main algebraic relations between the objects introduced so far.
First is the picture of the main homomorphisms which is better envisaged through a commutative diagram.

\begin{lemma}\label{Rees-fiber_diagram}
	With the notation introduced above, one has a commutative diagram 
	\begin{center}
		\begin{tikzpicture}
		\matrix (m) [matrix of math nodes,row sep=1.8em,column sep=12.5em,minimum width=2em, text height=1.5ex, text depth=0.25ex]
		{
			A[\mathbf{g}] & \Rees_R(\I)\\
			A[\mathbf{g}]  \otimes_A \kk &  \Rees_R(\I) \otimes_A \kk\\
			\kk[\mathbf{\overline{g}}] &  \Rees_{ R/\nnn R}(I). \\
		};
		\path [draw,->>] (m-1-1) -- (m-2-1);
		\path [draw,->>] (m-2-1) -- (m-3-1);
		\path [draw,->>] (m-1-2) -- (m-2-2);
		\path [draw,->>] (m-2-2) -- (m-3-2);
		\draw[right hook->] (m-1-1)--(m-1-2);			
		\draw[right hook->] (m-2-1)--(m-2-2);			
		\draw[right hook->] (m-3-1)--(m-3-2);
		\end{tikzpicture}	
	\end{center}
	where $A[\mathbf{g}]$  {\rm (}respectively, $\kk[\overline{\mathbf{g}}]${\rm )} is identified with $\fib_R(\I)$ {\rm (}respectively, $\fib_{ R/\nnn R}(I)${\rm )}.
\end{lemma}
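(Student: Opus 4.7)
The plan is to build each map explicitly out of already established material and then check injectivity of the middle horizontal map, which is the only point requiring actual work; commutativity will be formal.

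First I would set up the two horizontal inclusions in the top and bottom rows by invoking \autoref{relative_fiber_cone} together with \autoref{eq_isom_Rees_0_special_fiber}: the former identifies $A[\mathbf{g}]$ (respectively $\kk[\overline{\mathbf{g}}]$) with the degree-zero part of $\Rees_R(\I)$ (respectively $\Rees_{R/\nnn R}(I)$) in the $R$-grading coming from \autoref{eq_bigrad_Rees_alg}, and the latter expresses this degree-zero piece as a direct summand, the complement being $\mm\Rees_R(\I)$ (respectively $(\mm/\nnn\cap\mm)\,\Rees_{R/\nnn R}(I)$). Thus the top and bottom horizontal inclusions exist, and moreover the top inclusion splits as $A$-modules.

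Next I would construct the vertical maps. The augmentation $A \twoheadrightarrow A/\nnn \simeq \kk$ gives, for any $A$-module $M$, the canonical surjection $M \twoheadrightarrow M \otimes_A \kk$; applying this to $M = A[\mathbf{g}]$ and $M = \Rees_R(\I)$ yields the two upper vertical arrows. For the lower vertical arrows, the natural surjection $R \twoheadrightarrow R/\nnn R$ induces ring maps $A[\mathbf{g}] \twoheadrightarrow \kk[\overline{\mathbf{g}}]$ (sending $g_i \mapsto \overline{g_i}$, which is the defining property of $\overline{\mathbf{g}}$ in \autoref{NOTATION_GENERAL1}) and $\Rees_R(\I) \twoheadrightarrow \Rees_{R/\nnn R}(I)$ (the image of $\I$ in $R/\nnn R$ equals $I$, so the surjection $R[t] \twoheadrightarrow (R/\nnn R)[t]$ restricts to a surjection $R[\I t] \twoheadrightarrow (R/\nnn R)[I t]$). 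Both of these kill $\nnn$ and hence factor through $-\otimes_A \kk$, giving the two lower vertical arrows.

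The main (and essentially only) point to check is that the middle horizontal map remains injective. This uses the $A$-module direct sum decomposition
\[
\Rees_R(\I) \;=\; A[\mathbf{g}] \,\oplus\, \mm \Rees_R(\I)
\]
provided by \autoref{eq_isom_Rees_0_special_fiber}. Since $-\otimes_A \kk$ is additive on direct sums, tensoring preserves this decomposition, so $A[\mathbf{g}]\otimes_A\kk$ injects into $\Rees_R(\I)\otimes_A\kk$ as a direct summand.

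Commutativity of the two squares is then formal. The upper square is the naturality of $M \twoheadrightarrow M\otimes_A\kk$ applied to the inclusion $A[\mathbf{g}]\hookrightarrow \Rees_R(\I)$. The lower square is the statement that the surjection $R \twoheadrightarrow R/\nnn R$ respects the identification of the fiber cone with the $R$-graded (respectively $R/\nnn R$-graded) degree-zero part of the Rees algebra, which is immediate from the explicit grading in \autoref{eq_bigrad_Rees_alg} because that surjection preserves the $R$-degree. No genuine obstacle arises; the content of the lemma is purely the verification that the splitting of \autoref{eq_isom_Rees_0_special_fiber} survives base change to $\kk$.
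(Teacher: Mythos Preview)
Your proposal is correct and follows essentially the same approach as the paper: both arguments hinge on the fact that $A[\mathbf{g}]$ sits inside $\Rees_R(\I)$ as a direct $A$-module summand via \autoref{eq_isom_Rees_0_special_fiber}, so that the inclusion survives $-\otimes_A\kk$, while the remaining maps are the obvious natural ones. Your write-up is in fact more explicit than the paper's, which dispatches the same points in a couple of sentences.
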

\begin{proof}
	The upper vertical maps are obvious surjections as $\kk=A/\nnn$, hence the upper square is commutative -- the lower horizontal map of this square is injective because in the upper horizontal map $A[\mathbf{g}]$ is injected as a direct summand.
	The right lower vertical map is naturally induced by the natural maps
	$$R[t] \surjects R[t]\otimes_A\kk=R[t]/\nnn R[t]=A[\mathbf{x}][t]/\nnn A[\mathbf{x}][t]\simeq A/\nnn[\mathbf{x}][t]=\kk[\mathbf{x}][t],
	$$
	where $t$ is a new indeterminate.
	The left lower vertical map is obtained by restriction thereof.
\end{proof}

\begin{notation}
	For a standard bigraded ring $T$, a bigraded $T$-module $M$ and a bihomogeneous prime $\mathfrak{P} \in \biProj(T)$, the bihomogeneous localization at $\mathfrak{P}$ is given and denoted by 
	$$
	M_{(\mathfrak{P})} := \Big\lbrace \frac{m}{g} \mid m \in M, \; g \in T \setminus \mathfrak{P}, \; \bideg(m) = \bideg(g) \Big\rbrace. 
	$$
\end{notation}

Further details about the above homomorphisms are established in the following proposition.

\begin{proposition}
	\label{prop:minimal_primes_tensor_Rees}
	Consider the  homomorphism of bigraded algebras $\mathfrak{s}:\Rees_R(\mathcal{I}) \otimes_A \kk\surjects \Rees_{ R/\nnn R}(I)$ as in {\rm \autoref{Rees-fiber_diagram}}, where $I\neq 0$. Then:
	\begin{enumerate}[\rm (i)]
		\item  $\ker(\mathfrak{s}) $ is a minimal prime ideal  of $\Rees_R(\mathcal{I}) \otimes_A \kk$ and, for any minimal prime $\mathfrak{Q}$ of $\Rees_R(\mathcal{I}) \otimes_A \kk$ other than $\ker(\mathfrak{s}) $, one has that $\mathfrak{Q}$ corresponds to a minimal prime of $\gr_{\mathcal{I}}(R) \otimes_A \kk \simeq \left(\Rees_ R(\mathcal{I}) \otimes_A \kk\right)/ \mathcal{I} \left(\Rees_ R(\mathcal{I}) \otimes_A \kk\right)$ and so 
		$$
		\dim\left((\Rees_ R(\mathcal{I}) \otimes_A \kk)/\mathfrak{Q}\right) \le \dim(\gr_{\mathcal{I}}( R) \otimes_A \kk).
		$$
		In particular,
		$
	\dim\left(\Rees_ R(\mathcal{I}) \otimes_A \kk \right)=\max\{r+2, \dim(\gr_{\mathcal{I}}( R) \otimes_A \kk)\}.
		$
	
		\item Let $k\ge 0$ be an integer such that $\ell(\mathcal{I}_{\mathfrak{P}}) \le \HT(\mathfrak{P}/\nnn R) + k$ for every prime ideal $\mathfrak{P} \in \Spec( R)$ containing $(\mathcal{I},\nnn)$.
		Then 
		$ \dim(\gr_{\mathcal{I}}( R) \otimes_A \kk)\le \dim (R/\nnn R)+k.
		$
		In particular,
		$
		\dim\left(\Rees_ R(\mathcal{I}) \otimes_A \kk \right)\le  \max\{r+2,r+k+1\}.
		$

		\item $\dim\left(\ker(\mathfrak{s})\right) \le \dim\left(\gr_{\I}(R)\otimes_A \kk\right)$.
		
		\item After localizing at $\ker(\mathfrak{s}) \in \biProj\left(\Rees_R(\mathcal{I}) \otimes_A \kk\right)$ one gets the natural isomorphism 
		$$
		{\left(\Rees_R(\mathcal{I}) \otimes_A \kk\right)}_{\large(\ker(\mathfrak{s})\large)} \,\xrightarrow{\simeq}\, {\left(\Rees_{ R/\nnn R}(I)\right)}_{\large(\ker(\mathfrak{s})\large)},
		$$
		where the right hand side is a localization as a module over $\Rees_R(\mathcal{I}) \otimes_A \kk$.
	\end{enumerate}
\end{proposition}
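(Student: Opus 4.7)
My plan is to exploit a single elementary computation: for any generator $g_i$ of $\mathcal{I}$, inverting $g_i$ trivializes the Rees algebra. Indeed, $g_i \in R \subset \Rees_R(\mathcal{I})$ and $g_it \in \Rees_R(\mathcal{I})$, so $t = g_i^{-1}(g_it)$ in $\Rees_R(\mathcal{I})[g_i^{-1}]$, and therefore $\Rees_R(\mathcal{I})[g_i^{-1}] \simeq R[g_i^{-1}][t]$. Tensoring with $\kk = A/\nnn$ over $A$ and using that localization commutes with base change yields $\bigl(\Rees_R(\mathcal{I}) \otimes_A \kk\bigr)[\bar{g_i}^{-1}] \simeq (R/\nnn R)[\bar{g_i}^{-1}][t]$, which is a domain because $R/\nnn R$ is. So for each $i$ there is a unique minimal prime of $\Rees_R(\mathcal{I}) \otimes_A \kk$ not containing $\bar{g_i}$. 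Since $\Rees_{R/\nnn R}(I)$ is a domain, $\ker(\mathfrak{s})$ is prime, and because $\mathfrak{s}(\bar{g_i}) = \overline{g_i} \neq 0$ for every $i$, $\ker(\mathfrak{s})$ avoids all $\bar{g_i}$; hence $\ker(\mathfrak{s})$ is that unique minimal prime.

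For (i), any minimal prime $\mathfrak{Q} \neq \ker(\mathfrak{s})$ of $\Rees_R(\mathcal{I}) \otimes_A \kk$ must contain every $\bar{g_i}$, hence contain $\mathcal{I}\bigl(\Rees_R(\mathcal{I}) \otimes_A \kk\bigr)$, so its image in the quotient $\gr_{\mathcal{I}}(R) \otimes_A \kk$ is a prime, minimal by the minimality of $\mathfrak{Q}$. This forces $\dim\bigl((\Rees_R(\mathcal{I}) \otimes_A \kk)/\mathfrak{Q}\bigr) \le \dim(\gr_{\mathcal{I}}(R) \otimes_A \kk)$, and combining with $\dim \Rees_{R/\nnn R}(I) = r+2$ yields the claimed formula for $\dim(\Rees_R(\mathcal{I}) \otimes_A \kk)$.

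For (iii) and (iv), the same localization argument is the engine: since $\ker(\mathfrak{s})$ is finitely generated over a Noetherian ring and vanishes in each $\bigl(\Rees_R(\mathcal{I}) \otimes_A \kk\bigr)[\bar{g_i}^{-1}]$, there exist exponents $N_i$ with $\bar{g_i}^{N_i}\,\ker(\mathfrak{s}) = 0$. Hence $\sqrt{\Ann(\ker(\mathfrak{s}))} \supseteq \sqrt{\mathcal{I}(\Rees_R(\mathcal{I}) \otimes_A \kk)}$, so $\Supp(\ker(\mathfrak{s})) \subseteq V(\mathcal{I}(\Rees_R(\mathcal{I}) \otimes_A \kk))$ and (iii) follows. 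For (iv), the bihomogeneous element $\bar{g_0}$ (of bidegree $(d,0)$) lies outside the bigraded prime $\ker(\mathfrak{s})$, hence becomes a unit in the bihomogeneous localization at $\ker(\mathfrak{s})$; combined with $\bar{g_0}^{N_0}\,\ker(\mathfrak{s})=0$, this forces $\ker(\mathfrak{s})_{(\ker(\mathfrak{s}))} = 0$, and the surjection $\mathfrak{s}$ becomes an isomorphism after bihomogeneous localization.

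Part (ii) is the step I expect to require the most bookkeeping. Given a minimal prime $\mathfrak{Q}'$ of $\Rees_R(\mathcal{I})$ containing $(\mathcal{I}, \nnn)\Rees_R(\mathcal{I})$, set $\mathfrak{P} = \mathfrak{Q}' \cap R \supseteq (\mathcal{I}, \nnn)$; then $\mathfrak{Q}' \supseteq \mathfrak{P}\Rees_R(\mathcal{I})$, and I need the identity
\begin{equation*}
\dim\bigl(\Rees_R(\mathcal{I})/\mathfrak{P}\Rees_R(\mathcal{I})\bigr) \;=\; \dim(R/\mathfrak{P}) + \ell(\mathcal{I}_\mathfrak{P}),
\end{equation*}
obtained from the Dimension Formula applied to the finitely generated $R/\mathfrak{P}$-algebra $\Rees_R(\mathcal{I})/\mathfrak{P}\Rees_R(\mathcal{I})$, whose generic fiber over $R/\mathfrak{P}$ is the fiber cone $\fib_{R_\mathfrak{P}}(\mathcal{I}_\mathfrak{P})$ of dimension $\ell(\mathcal{I}_\mathfrak{P})$. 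Since $R = A[\mathbf{x}]$ is a Cohen--Macaulay polynomial ring, $\dim(R/\mathfrak{P}) = (r+1) - \HT(\mathfrak{P}/\nnn R)$; adding and using the hypothesis $\ell(\mathcal{I}_\mathfrak{P}) \le \HT(\mathfrak{P}/\nnn R) + k$ gives $\dim(\Rees_R(\mathcal{I})/\mathfrak{Q}') \le (r+1) + k = \dim(R/\nnn R) + k$, and the final assertion of (ii) follows upon combining with (i). The delicate point is the dimension identity itself, which calls for a careful passage between the local analytic spread at $\mathfrak{P}$ and the global Krull dimension of the quotient of the Rees algebra.
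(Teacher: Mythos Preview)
Your proof is correct and follows the same route as the paper: the key for (i), (iii), (iv) is that a power of $\mathcal{I}$ annihilates $\ker(\mathfrak{s})$, which you obtain by inverting a single generator $g_i$ while the paper localizes at primes of $R$ not containing $\mathcal{I}$---equivalent moves yielding $\mathcal{I}^l\cdot\ker(\mathfrak{s})=0$. For (ii) the paper bypasses your intermediate identity $\dim\bigl(\Rees_R(\mathcal{I})/\mathfrak{P}\Rees_R(\mathcal{I})\bigr) = \dim(R/\mathfrak{P}) + \ell(\mathcal{I}_\mathfrak{P})$ (which, as you rightly flag, is delicate since the left side need not be a domain) by applying the dimension formula \cite[Lemma~1.1.2]{simis1988krull} directly to the \emph{domain} $(\gr_\mathcal{I}(R)\otimes_A\kk)/\mathfrak{M}$ over $R/\mathfrak{P}$ and then bounding the transcendence degree by $\dim\bigl(\gr_\mathcal{I}(R)\otimes_R k(\mathfrak{P})\bigr)=\ell(\mathcal{I}_\mathfrak{P})$; your argument is easily patched the same way by applying the formula to $\Rees_R(\mathcal{I})/\mathfrak{Q}'$ itself rather than to $\Rees_R(\mathcal{I})/\mathfrak{P}\Rees_R(\mathcal{I})$.
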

\begin{proof}
	(i)	Let $P\in \Spec (R)$ be a prime ideal not containing $\mathcal{I}$.
	Localizing the surjection $\mathfrak{s}:\Rees_R(\mathcal{I}) \otimes_A \kk\surjects \Rees_{ R/\nnn R}(I)$ at $R\setminus P$, one easily sees that it becomes an isomorphism.
	It follows that some power of $\mathcal{I}$ annihilates $\ker({\mathfrak{s}})$, that is 
	\begin{equation}
		\label{eq:vanishing_ker_specialization}
		\I ^l \cdot \ker(\mathfrak{s})=0
	\end{equation}	
	for some $l>0$.
	Since $I\neq 0$, one has $\mathcal{I} \not\subset \ker(\mathfrak{s})$.
	Therefore, any prime ideal of $\Rees_R(\mathcal{I}) \otimes_A \kk$ contains either the prime ideal $\ker{\mathfrak{(s)}}$ or the ideal $\mathcal{I}$.
	Thus, $\ker({\mathfrak{s}})$ is a minimal prime and any other minimal prime  $\mathfrak{Q}$ of $\Rees_R(\mathcal{I}) \otimes_A \kk$ contains  $\mathcal{I}$.
	Clearly, then  any such $\mathfrak{Q}$ is a minimal prime of $\left(\Rees_ R(\mathcal{I}) \otimes_A \kk\right)/ \mathcal{I} \left(\Rees_ R(\mathcal{I}) \otimes_A \kk\right) \simeq \gr_{\mathcal{I}}(R) \otimes_A \kk$.
	Since $\dim\left(\Rees_{ R/\nnn R}(I)\right)=\dim\left(R/\nnn R\right)+1$, the claim follows.
	
	(ii)
	For this, let $\mathfrak{M}$ be a minimal prime of $\gr_{\mathcal{I}}( R) \otimes_A \kk$ of maximal dimension, i.e.: 
	$$
	\dim(\gr_{\mathcal{I}}( R) \otimes_A \kk) = \dim((\gr_{\mathcal{I}}( R) \otimes_A \kk)/ \mathfrak{M}),
	$$
	and let $\mathfrak{P} = \mathfrak{M} \cap  R$ be its contraction to $ R$. 
	Clearly, $\mathfrak{P} \supseteq (\mathcal{I}, \nnn)$.
	By \cite[Lemma 1.1.2]{simis1988krull} and the hypothesis,
	\begin{align*}
	\dim\left(\gr_{\mathcal{I}}( R) \otimes_A \kk\right) &= \dim\big((\gr_{\mathcal{I}}( R) \otimes_A \kk)/ \mathfrak{M}\big) \\
	&= \dim( R/\mathfrak{P}) + \text{trdeg}_{R/\mathfrak{P}} \big((\gr_{\mathcal{I}}( R) \otimes_A \kk)/ \mathfrak{M}\big) \\
	&= \dim(R/\mathfrak{P}) + \dim \big(\left((\gr_{\mathcal{I}}( R) \otimes_A \kk)/ \mathfrak{M}\right)\otimes_{ R/\mathfrak{P}}   R_\mathfrak{P}/\mathfrak{P} R_\mathfrak{P} \big)\\
	&\le \dim( R/\mathfrak{P}) + \dim \big(\gr_{\mathcal{I}}( R) \otimes_{ R}   R_\mathfrak{P}/\mathfrak{P} R_\mathfrak{P} \big)\\
	&= \dim( R/\mathfrak{P}) + \ell(\mathcal{I}_{\mathfrak{P}})\\
	&\le \dim( R/\mathfrak{P})+\HT(\mathfrak{P}/\nnn)+k\\
	&\le \dim({ R/\nnn R}) + k,
	\end{align*}
	as required.
	
	\smallskip		
	
	(iii) This is clear since, by \autoref{eq:vanishing_ker_specialization}, one has $\Ann_{\Rees_R(\mathcal{I}) \otimes_A \kk}\left(\ker(\mathfrak{s})\right) \supset \mathcal{I}^l$.
	
	\smallskip 
	
	(iv) Let $K \subset \Rees_R(\mathcal{I}) \otimes_A \kk$ be the multiplicative set of bihomogeneous elements not in $\ker(\mathfrak{s})$.
	Since $\I^l \subset \Ann_{\Rees_R(\mathcal{I}) \otimes_A \kk}\left(\ker(\mathfrak{s})\right)$ and $\I^l \not\subset \ker(\mathfrak{s})$, it follows that $K^{-1}\ker(\mathfrak{s})=0$, and so the localized map $K^{-1}{\left(\Rees_R(\mathcal{I}) \otimes_A \kk\right)} \xrightarrow{\simeq} K^{-1}{\left(\Rees_{ R/\nnn R}(I)\right)}$ is an isomorphism.
	Finally, one has that ${\left(\Rees_R(\mathcal{I}) \otimes_A \kk\right)}_{\large(\ker(\mathfrak{s})\large)} = {\left[K^{-1}{\left(\Rees_R(\mathcal{I}) \otimes_A \kk\right)}\right]}_{(0,0)}$ and ${\left(\Rees_{ R/\nnn R}(I)\right)}_{\large(\ker(\mathfrak{s})\large)} = {\left[K^{-1}{\left(\Rees_{ R/\nnn R}(I)\right)}\right]}_{(0,0)}$.
\end{proof}

The next two lemmas are consequences of the primitive element theorem and the upper semi-continuity of the dimension of fibers, respectively.  Proofs are given for the reader's convenience.

\begin{lemma}
	\label{lem:upper_bound_deg}
	Let $\kk$ denote a field of characteristic zero and let ${C}\subset {B}$ stand for a finite extension of finitely generated $\kk$-domains.
	Let $\mathfrak{b}\subset {B}$ be a prime ideal and set $\mathfrak{c}:=\mathfrak{b}\cap {C}\subset {C}$ for its contraction. If ${C}$ is integrally closed then one has
	$$
	\left[\Quot(B/\mathfrak{b}):\Quot(C/\mathfrak{c})\right] \le \left[\Quot({B}):\Quot({C})\right].
	$$
\end{lemma}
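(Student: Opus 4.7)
The plan is to combine the primitive element theorem (available because $\mathrm{char}(\kk)=0$) with the key feature of the integrally closed hypothesis on $C$: for any $b\in B$, the minimal polynomial of $b$ over $\Quot(C)$ has coefficients in $C$ itself.

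Set $n:=[\Quot(B):\Quot(C)]$, $K:=\Quot(C)$, $L:=\Quot(B)$, and $\kappa(\mathfrak{c}):=\Quot(C/\mathfrak{c})$. Since $C\subset B$ is a finite extension of finitely generated $\kk$-domains, so is $C/\mathfrak{c}\hookrightarrow B/\mathfrak{b}$; in particular, $\Quot(B/\mathfrak{b})/\kappa(\mathfrak{c})$ is a finite field extension, and by the characteristic zero assumption it is separable. First I would produce a primitive element of this extension lying in $B/\mathfrak{b}$ (not merely in its fraction field): pick module generators $y_1,\ldots,y_N$ of $B$ over $C$, so that their images $\bar{y}_1,\ldots,\bar{y}_N$ generate $B/\mathfrak{b}$ as a $C/\mathfrak{c}$-algebra. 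By the primitive element theorem, a suitable $\kappa(\mathfrak{c})$-linear combination of the $\bar{y}_i$ is a primitive element for $\Quot(B/\mathfrak{b})/\kappa(\mathfrak{c})$; clearing denominators shows that there exists $\bar{b}\in B/\mathfrak{b}$ with $\Quot(B/\mathfrak{b})=\kappa(\mathfrak{c})(\bar{b})$. Lift $\bar{b}$ to an element $b\in B$.

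The main step is then to exploit that $b$ is integral over $C$ (since $B$ is finite over $C$) and that $C$ is integrally closed in its fraction field. Let $p(T)\in K[T]$ denote the minimal polynomial of $b$ over $K$. Because $b$ is integral over $C$ and $C$ is integrally closed, all coefficients of $p$ in fact lie in $C$, i.e.\ $p(T)\in C[T]$. Moreover, $\deg p=[K(b):K]\le [L:K]=n$. Reducing modulo $\mathfrak{c}$ yields $\bar{p}(T)\in (C/\mathfrak{c})[T]$ with $\deg\bar{p}=\deg p$ (here monicity of $p$ is essential so that the leading coefficient survives reduction), and the relation $p(b)=0$ in $B$ reduces to $\bar{p}(\bar{b})=0$ in $B/\mathfrak{b}$. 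Consequently, $\bar{b}$ is algebraic over $\kappa(\mathfrak{c})$ of degree at most $\deg\bar{p}=\deg p\le n$, so
\[
[\Quot(B/\mathfrak{b}):\kappa(\mathfrak{c})]=[\kappa(\mathfrak{c})(\bar{b}):\kappa(\mathfrak{c})]\le n=[\Quot(B):\Quot(C)],
\]
which is the required inequality.

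The only delicate point I anticipate is the step of producing a primitive element of $\Quot(B/\mathfrak{b})/\kappa(\mathfrak{c})$ that actually lies in $B/\mathfrak{b}$ (otherwise lifting to $b\in B$ and applying the integral-closure trick would not be directly available). This is handled by using module generators of $B/\mathfrak{b}$ over $C/\mathfrak{c}$ together with scaling by a nonzero element of $C/\mathfrak{c}$ to clear denominators. Everything else is then a formal consequence of the integrally closed hypothesis on $C$, which forces the minimal polynomial of $b$ to have coefficients in $C$ and thus to reduce cleanly modulo $\mathfrak{c}$.
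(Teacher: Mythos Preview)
Your proof is correct and follows essentially the same strategy as the paper: use the primitive element theorem (available in characteristic zero) together with the fact that the minimal polynomial over $\Quot(C)$ of an element integral over $C$ lies in $C[T]$ when $C$ is integrally closed, then reduce modulo $\mathfrak{c}$. The only minor variation is that the paper chooses a single $\kk$-linear combination of module generators that is simultaneously primitive for both extensions, whereas you (slightly more economically) pick a primitive element only for $\Quot(B/\mathfrak{b})/\kappa(\mathfrak{c})$ and lift it to $B$, observing that the lift need not be primitive upstairs since one only needs $\deg p\le n$.
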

\begin{proof}
	Let $\{b_1,\ldots, b_c\}$ generate $B$ as a $C$-module.
	Setting $\overline{C}:=C/\mathfrak{c}\subset \overline{B}=B/\mathfrak{b}$ then the images $\{\overline{b_1},\ldots, \overline{b_c}\}$ generate then $\overline{B}$ as a  $\overline{C}$-module. 
	Since the field extensions $\Quot(B)|\Quot(C)$ and $\Quot(\overline{B})|\Quot(\overline{C})$ are separable, and since $\kk$ is moreover infinite, one can find elements $\lambda_1,\ldots, \lambda_c\in \kk$ such that $L:=\sum_{i=1}^c \lambda_ib_i\in B$ and $\ell:=\sum_{i=1}^c \lambda_i\overline{b_i}\in \overline{B}$ are respective primitive elements of the above extensions.
	Let $X^u+a_1X^{u-1}+ \cdots+a_u=0$ denote the minimal polynomial of $L$ over $\Quot(C)$.
	Since $C$ is integrally closed, then $a_i \in C$ for all $1 \le i \le u$ (see, e.g., \cite[Theorem 9.2]{MATSUMURA}).
	Reducing modulo $\mathfrak{b}$, one gets  $\ell^u+\overline{a_1}\ell^{u-1}+ \cdots+\overline{a_u}=0.$
	Then the degree of the minimal polynomial of $\ell$ over $\Quot(\overline{C})$ is at most $u$, as was to be shown.
\end{proof}

\begin{lemma}
	\label{lem_dim_general_fiber}
	The set  
	$
	\big\lbrace \pp \in \Spec(A) \mid \dim\big(\gr_{\mathcal{I}}(R) \otimes_{A} k(\pp)\big) \le r+1 \big\rbrace
	$
	is open and dense in $\Spec(A)$.
	\begin{proof}		
		One can consider the associated graded ring $\gr_{\I}(R)=\bigoplus_{n\ge 0} \I^{n}/\I^{n+1}$ as a finitely generated single-graded $A$-algebra with zero graded part equal to $A$. 
		Therefore \cite[Theorem 14.8 b.]{EISEN_COMM} implies that for every $n$ the subset 
		$
		Z_n = \lbrace \pp \in \Spec(A) \mid \dim\big(\gr_{\mathcal{I}}(R) \otimes_{A} k(\pp)\big) \le n \rbrace
		$
		is open in $\Spec(A)$.
		
		Let $\KK=\Quot(A)$, $\mathbb{T}=R\otimes_{A}\KK=\KK[x_0,\ldots,x_r]$ and $\mathbb{I}=\mathcal{I} \otimes_{A} \KK$.
		The generic fiber of $A \hookrightarrow \gr_{\I}(R)$ is given by $\gr_{\I}(R) \otimes_A \KK = \gr_{\mathbb{I}}(\mathbb{T})$.
		Since $\dim\left(\gr_{\mathbb{I}}(\mathbb{T})\right)=\dim(\mathbb{T})=r+1$, then $(0) \in Z_{r+1}$ and so $Z_{r+1}\neq \emptyset$.
	\end{proof}
\end{lemma}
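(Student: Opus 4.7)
The plan is to split the statement into two independent claims: openness of the locus $Z_{r+1} := \{\pp \in \Spec(A) \mid \dim(\gr_{\mathcal{I}}(R) \otimes_A k(\pp)) \le r+1\}$ and its non-emptiness, then deduce density by irreducibility.

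For openness, I would appeal to the standard upper semi-continuity of fiber dimension applied to the structure morphism $\Spec(\gr_{\mathcal{I}}(R)) \to \Spec(A)$. The key preliminary observation is that $\gr_{\mathcal{I}}(R) = \bigoplus_{n \ge 0} \mathcal{I}^{n}/\mathcal{I}^{n+1}$ is a finitely generated $A$-algebra: indeed $R$ itself is finitely generated over $A$ and $\mathcal{I}$ is a finitely generated ideal, so $\Rees_R(\mathcal{I})$ is finitely generated over $A$, and $\gr_{\mathcal{I}}(R)$ is a quotient thereof. Consequently \cite[Theorem 14.8 b]{EISEN_COMM} (Chevalley's upper semi-continuity) ensures that for every $n$ the subset $Z_n := \{\pp \in \Spec(A) \mid \dim(\gr_{\mathcal{I}}(R) \otimes_A k(\pp)) \le n\}$ is open in $\Spec(A)$.

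For density, I would use that $\Spec(A)$ is irreducible (as $A$ is a domain), so any non-empty open subset is automatically dense. Hence it suffices to exhibit a single point of $Z_{r+1}$, and the natural candidate is the generic point $(0)$. Setting $\KK := \Quot(A) = k((0))$ and using flatness of $A \hookrightarrow \KK$, tensoring commutes with quotients and powers of $\mathcal{I}$, giving
$$
\gr_{\mathcal{I}}(R) \otimes_A \KK \;\simeq\; \gr_{\mathbb{I}}(\mathbb{T}),
$$
where $\mathbb{T} := R \otimes_A \KK = \KK[x_0,\ldots,x_r]$ and $\mathbb{I} := \mathcal{I}\,\mathbb{T}$. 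Since $\mathbb{I}$ is a non-zero homogeneous ideal in the polynomial domain $\mathbb{T}$ (so it has a regular element), the classical identity $\dim(\gr_{\mathbb{I}}(\mathbb{T})) = \dim(\mathbb{T}) = r+1$ applies, placing $(0)$ in $Z_{r+1}$.

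There is no real obstacle: the argument is a textbook combination of upper semi-continuity and a generic fiber computation. The only mild point to check carefully is that the associated graded construction commutes with $\otimes_A \KK$, which is immediate from flatness of localization at $(0)$; once that is in place, the dimension equality $\dim(\gr_{\mathbb{I}}(\mathbb{T})) = \dim(\mathbb{T})$ for a proper ideal with a regular element in a Noetherian ring does the rest.
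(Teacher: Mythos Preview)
Your proposal is correct and follows essentially the same route as the paper: upper semi-continuity of fiber dimension via \cite[Theorem 14.8]{EISEN_COMM} for openness, and the generic fiber computation $\gr_{\I}(R)\otimes_A\KK\simeq\gr_{\mathbb{I}}(\mathbb{T})$ with $\dim\gr_{\mathbb{I}}(\mathbb{T})=r+1$ for non-emptiness (hence density, since $A$ is a domain). The only cosmetic difference is that the paper explicitly records that $\gr_{\I}(R)$ is a finitely generated \emph{graded} $A$-algebra with degree-zero part $A$, which is the precise hypothesis needed to get semi-continuity on the base $\Spec(A)$ rather than on the total space; you have this structure available but do not state it, so you may want to make that explicit when invoking the theorem.
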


\subsection{Geometric picture}

In this part one stresses the corresponding geometric environment, along with some additional notation.

\begin{setup}\label{geometric_setup}\rm
First, recall from \autoref{NOTATION_GENERAL1} that $R=A[x_0,\ldots,x_r]$ is a standard polynomial ring over $A$.
One sets $\PP_A^r=\Proj(R)$ as before.
In addition, one had $A=\mathbb{F}[z_1,\ldots,z_m]$ and $\nnn\subset A$ a given (rational) maximal ideal.

Let  $\GG$ and $\bgg$ be as in \autoref{NOTATION_GENERAL1}.
Denote by $\Proj(A[\mathbf{g}])$ and 
$\Proj(\kk[\mathbf{\overline{g}}])$ the images of $\GG$ and $\bgg$, respectively (see \autoref{image}).
Let $\mathbb{B(\mathcal{I})}:=\biProj(\Rees_R(\I))$ and $\mathbb{B}(I):=\biProj(\Rees_{R / \nnn R}(I))$ be the graphs of $\GG$ and $\bgg$, respectively (see \autoref{graph}).

Let $\mathbb{E}(\I):=\biProj\left(\gr_{\I}(R)\right)$ be the exceptional divisor of $\mathbb{B(\mathcal{I})}$.
\end{setup}

Consider the commutative diagrams 
\begin{equation}
	\label{eq:graph_Pi}
	\begin{tikzpicture}[baseline=(current  bounding  box.center)]
	\matrix (m) [matrix of math nodes,row sep=1.8em,column sep=13.5em,minimum width=2em, text height=1.5ex, text depth=0.25ex]
	{
		\mathbb{B}(\mathcal{I}) &  \\
		{\PP_A^r} &  \Proj(A[\mathbf{g}]) \\
	};
	\path[-stealth]
	(m-1-1) edge node [above]	{$\Pi$} (m-2-2)
	(m-1-1) edge node [left]  {$\Pi^\prime$} (m-2-1)
	;		
	\draw[->,dashed] (m-2-1)--(m-2-2) node [midway,above] {$\GG$};	
	\end{tikzpicture}	
\end{equation}
and 
\begin{equation}
	\label{eq:graph_pi}
	\begin{tikzpicture}[baseline=(current  bounding  box.center)]
	\matrix (m) [matrix of math nodes,row sep=1.8em,column sep=13.5em,minimum width=2em, text height=1.5ex, text depth=0.25ex]
	{
		\mathbb{B}(I) &  \\
		{\PP_{\kk}^r} &  {\Proj(\kk[\mathbf{\overline{g}}])} \\
	};
	\path[-stealth]
	(m-1-1) edge node [above]	{$\pi$} (m-2-2)
	(m-1-1) edge node [left]  {$\pi^\prime$} (m-2-1)
	;		
	\draw[->,dashed] (m-2-1)--(m-2-2) node [midway,above] {$\bgg$};	
	\end{tikzpicture}	
\end{equation}
where $\Pi^\prime$ and $\pi^\prime$ are the blowing-up structural maps, which are well-known to be birational (see, e.g., \cite[Section II.7]{HARTSHORNE}). 

Note, for the sake of coherence, that had one taken care of a full development of rational maps in the biprojective environment over the ground ring $A$, via a judicious use of the preliminary material developed on \autoref{sec:rat_maps}, the first of the above diagrams would be routinely verified.
Most of the presently needed material in the biprojective situation is more or less a straightforward extension of the projective one.
Thus, for example, the field of rational functions of the biprojective scheme $\mathbb{B}(\mathcal{I})$ is given by the bihomogeneous localization of $\Rees_R(\I)$ at the null ideal, that is
$$
K(\mathbb{B}(\mathcal{I})):={\Rees_R(\I)}_{(0)} = \Big\{ \frac{f}{g} \mid  f,g \in \Rees_R(\I),  \bideg(f)=\bideg(g), g\neq 0 \Big\}.
$$
Then, the {\em degree} of the morphism $\Pi$ (respectively, $\Pi^\prime$) is given by
$$
\left[K(\mathbb{B}(\mathcal{I})):K(\Proj(A[\mathbf{g}]))\right] \qquad \text{(respectively,  $\left[K(\mathbb{B}(\mathcal{I})):K({\PP_A^r})\right]$).}
$$ 
Likewise, one has:

\begin{lemma}
	The following statements hold:
	\begin{enumerate}[\rm (i)]
		\item $K(\mathbb{B}(\mathcal{I}))=K({\PP_{\kk}^r})$.
		\item $\Pi^\prime$ is a birational morphism. 
		\item $\deg(\Pi)=\deg(\GG)$.
	\end{enumerate}
	\begin{proof}
		(i) It is clear that $K({\PP_{\kk}^r}) \subset K(\mathbb{B}(\mathcal{I}))$.
		Let $f/g \in K(\mathbb{B}(\mathcal{I}))$ with $f,g \in {\left[\Rees_R(\I)\right]}_{\alpha,\beta}$, 
		then it follows that $f=pt^{\beta}$ and $g=p^\prime t^\beta$ where $p,p^\prime \in {\left[R\right]}_{\alpha+\beta d}$ and $d=\deg(g_i)$ (see \autoref{eq_bigrad_Rees_alg}).
		Thus,  $f/g=p/p^\prime \in R_{(0)}$ and so $K(\mathbb{B}(\mathcal{I})) \subset  K({\PP_{\kk}^r})$.
		
		(ii) The argument is like  the implication (B1) $\Rightarrow$ (B2) in \autoref{def:birational_maps}.
		Let $\eta$ denote the generic point of $\mathbb{B}(\I)$ and $\xi$ that of $\PP_A^r$. 
		From part (i), ${\left(\Pi^\prime\right)}_\eta^\sharp \,:\, \OO_{{\PP_A^r},\xi} \rightarrow \OO_{\mathbb{B}(\mathcal{I}),\eta}$ is an isomorphism.  
		Therefore, \cite[Proposition 10.52]{GORTZ_WEDHORN}  yields the existence of dense open subsets $U \subset \mathbb{B}(\mathcal{I})$ and $V \subset {\PP_A^r}$ such that the restriction ${\Pi^\prime\mid}_U:U \xrightarrow{\simeq} V$ is an isomorphism.
		
		(iii) This follows from (i) and the definitions.
	\end{proof}
\end{lemma}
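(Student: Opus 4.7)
The plan is to handle the three assertions in order, with (i) doing the bulk of the work and (ii), (iii) following essentially formally.

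For part (i), I would compute $K(\mathbb{B}(\I)) = \Rees_R(\I)_{(0)}$ explicitly. The inclusion $K(\PP_A^r) \subset K(\mathbb{B}(\I))$ is immediate: any element $p/p' \in R_{(0)}$ with $p,p' \in [R]_c$ can be regarded as lying in $\Rees_R(\I)_{(0)}$ by viewing $p,p'$ as elements of bidegree $(c,0)$ in $\Rees_R(\I)$, the $R$-part of the bigrading. For the reverse inclusion, take $f/g$ with $f,g \in [\Rees_R(\I)]_{\alpha,\beta}$. The bigraded description of $\Rees_R(\I)$ in \autoref{eq_bigrad_Rees_alg}, namely $[\Rees_R(\I)]_{\alpha,\beta} = [\I^\beta]_{\alpha + \beta d}\, t^\beta$ (where $d$ is the common degree of the $g_i$'s), lets me write $f = p t^\beta$ and $g = p' t^\beta$ with $p, p' \in [R]_{\alpha + \beta d}$. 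Cancelling $t^\beta$ gives $f/g = p/p' \in R_{(0)} = K(\PP_A^r)$. (I also note that what is displayed as $K(\PP_\kk^r)$ in the statement should read $K(\PP_A^r)$, since $\mathbb{B}(\I)$ is a biprojective scheme over $A$ and the map $\Pi'$ lands in $\PP_A^r$.)

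For part (ii), I would mimic the argument already used for the implication (B1) $\Rightarrow$ (B2) in \autoref{def:birational_maps}. Let $\eta$ be the generic point of $\mathbb{B}(\I)$ and $\xi$ the generic point of $\PP_A^r$. The induced local homomorphism $(\Pi')^{\sharp}_\eta : \OO_{\PP_A^r,\xi} \to \OO_{\mathbb{B}(\I),\eta}$ is identified with the field extension $K(\PP_A^r) \hookrightarrow K(\mathbb{B}(\I))$, which by part (i) is an isomorphism. Applying \cite[Proposition 10.52]{GORTZ_WEDHORN} (exactly as in the cited argument) extends this to an isomorphism between dense open neighborhoods of $\eta$ in $\mathbb{B}(\I)$ and of $\xi$ in $\PP_A^r$, proving that $\Pi'$ is birational.

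For part (iii), this is now essentially a matter of unravelling definitions. The definition of $\deg(\Pi)$ given just above the lemma is $[K(\mathbb{B}(\I)) : K(\Proj(A[\mathbf{g}]))]$. By part (i) (corrected), $K(\mathbb{B}(\I)) = K(\PP_A^r)$, and by \autoref{image} the field $K(\Proj(A[\mathbf{g}]))$ equals $A[\mathbf{g}]_{(0)}$, which is the field of rational functions of the image $Y$ of $\GG$. Therefore $\deg(\Pi) = [K(\PP_A^r) : K(Y)] = \deg(\GG)$ by \autoref{degree}. I do not expect any serious obstacle in the verification; the only mildly delicate point is the bigraded computation in (i), where one has to remember to use the description \autoref{eq_bigrad_Rees_alg} rather than the internal $R$-grading alone.
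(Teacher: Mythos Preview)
Your proposal is correct and follows essentially the same argument as the paper's own proof: the bigraded computation via \autoref{eq_bigrad_Rees_alg} for (i), the (B1)$\Rightarrow$(B2) argument with \cite[Proposition 10.52]{GORTZ_WEDHORN} for (ii), and unwinding the definitions for (iii). You also correctly spot that $K(\PP_\kk^r)$ in the statement is a typo for $K(\PP_A^r)$.
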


Thus, one has as expected:  $\Pi$ and $\pi$  are generically finite morphisms if and only if $\GG$ and $\bgg$ are so, in which case one has 
$$
\deg(\GG) = \deg(\Pi) \quad \text{ and } \quad \deg(\bgg)=\deg(\pi).
$$

Next is a reformulation of \autoref{Rees-fiber_diagram} by taking the respective associated $\Proj$ and $\biProj$ schemes.		

\begin{lemma}
	\label{lem:diagram_closed_immersions}
There is a commutative diagram 
	\begin{equation}
		\label{diag:closed_immersions}
		\begin{tikzpicture}[baseline=(current  bounding  box.center)]
		\matrix (m) [matrix of math nodes,row sep=1.8em,column sep=12.5em,minimum width=2em, text height=1.5ex, text depth=0.25ex]
		{
			\mathbb{B}(I) & {\Proj(\kk[\mathbf{\overline{g}}])}\\
			\mathbb{B}(\mathcal{I}) \times_{A} \kk &  \Proj(A[\mathbf{g}]) \times_{A} \kk\\
			\mathbb{B}(\mathcal{I}) &  \Proj(A[\mathbf{g}]) \\
		};
		\path[-stealth]
		(m-1-1) edge node [above]	{$\pi$} (m-1-2)
		(m-2-1) edge node [above]	{$\Pi\times_{A} \kk$} (m-2-2)
		(m-3-1) edge node [above]	{$\Pi$} (m-3-2)
		(m-1-1) edge node [left] {$p_1$} (m-2-1)
		(m-1-2) edge node [right] {$q_1$} (m-2-2)
		(m-2-1) edge node [left] {$p_2$} (m-3-1)
		(m-2-2) edge node [right] {$q_2$} (m-3-2)
		;		
		\end{tikzpicture}	
	\end{equation}	
	where the statements below are satisfied:
	\begin{enumerate}[\rm (i)]
		\item $p_1$ and $q_1$ are closed immersions.
		\item $p_2$ and $q_2$ are the natural projections from the fiber products. 
	\end{enumerate} 
\end{lemma}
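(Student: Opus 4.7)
The plan is to deduce everything by applying the $\Proj/\biProj$ construction, together with its compatibility with base change, to the commutative diagram of graded algebras established in \autoref{Rees-fiber_diagram}.

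First I would address part (ii). By construction of fiber products in the category of schemes, for any graded (respectively, bigraded) $A$-algebra $T$ together with a ring homomorphism $A \to \kk$, there is a canonical identification
$$
\Proj(T \otimes_A \kk) \;\simeq\; \Proj(T) \times_A \kk
\qquad\text{(resp. } \biProj(T \otimes_A \kk) \simeq \biProj(T) \times_A \kk \text{)},
$$
and under this identification the morphism induced by $T \to T \otimes_A \kk$ is precisely the first-factor projection from the fiber product. Applying this to $T = A[\mathbf{g}]$ and $T = \Rees_R(\mathcal{I})$ (in both cases, the ``$A$-flatness issues'' are irrelevant because base change of $\Proj$/$\biProj$ along any ring map always exists and is given by tensoring the coordinate ring). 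This gives the identifications
$$
\mathbb{B}(\mathcal{I}) \times_A \kk \simeq \biProj\!\left(\Rees_R(\mathcal{I}) \otimes_A \kk\right),\qquad \Proj(A[\mathbf{g}]) \times_A \kk \simeq \Proj\!\left(A[\mathbf{g}] \otimes_A \kk\right),
$$
and realizes $p_2$ and $q_2$ as the natural projections, proving (ii).

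For part (i), I would invoke the standard fact that a surjective homomorphism of standard graded (resp.\ standard bigraded) algebras induces a closed immersion on the associated $\Proj$ (resp.\ $\biProj$) scheme; indeed, if $T \twoheadrightarrow T'$ is such a surjection with kernel $J$, then $\Proj(T') \hookrightarrow \Proj(T)$ is the closed subscheme cut out by the sheafification of $J$, and the analogous statement holds in the bigraded setting. By \autoref{Rees-fiber_diagram} the vertical maps $\Rees_R(\mathcal{I}) \otimes_A \kk \twoheadrightarrow \Rees_{R/\nnn R}(I)$ and $A[\mathbf{g}] \otimes_A \kk \twoheadrightarrow \kk[\overline{\mathbf{g}}]$ are surjective homomorphisms of standard bigraded, respectively standard graded, $\kk$-algebras. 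Applying $\biProj$ and $\Proj$ and composing with the identifications from part (ii), one obtains that $p_1$ and $q_1$ are closed immersions.

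Finally, commutativity of the full diagram follows by contravariant functoriality of $\Proj$ and $\biProj$ applied to \autoref{Rees-fiber_diagram}: the upper square is the image of the lower square of that lemma, and the lower square is the image of the upper square. The morphisms $\pi$ and $\Pi \times_A \kk$ arise from the lower and middle horizontal inclusions $\kk[\overline{\mathbf{g}}] \hookrightarrow \Rees_{R/\nnn R}(I)$ and $A[\mathbf{g}] \otimes_A \kk \hookrightarrow \Rees_R(\mathcal{I}) \otimes_A \kk$, so the compatibility with $\pi$, $\Pi \times_A \kk$, and $\Pi$ is immediate. The main mild subtlety, and essentially the only place requiring care, is the identification of base change with tensor product for multi-graded $\Proj$ and to note that the bigrading is preserved throughout; everything else is formal.
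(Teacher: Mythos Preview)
Your proposal is correct and matches the paper's intended approach: the paper presents this lemma without proof, explicitly introducing it as ``a reformulation of \autoref{Rees-fiber_diagram} by taking the respective associated $\Proj$ and $\biProj$ schemes,'' which is precisely what you carry out in detail. The only content is the standard compatibility of $\Proj$/$\biProj$ with base change and the fact that surjections of (bi)graded algebras induce closed immersions, both of which you invoke correctly.
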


\begin{corollary}
	\label{cor_dim_blow_ups}
	The following statements hold:
	\begin{enumerate}
		\item[\rm (i)] $\dim\left(\mathbb{B}(\I)\right)=\dim(A)+r$.
		\item[\rm (ii)] $\dim\left(\mathbb{B}(I)\right)=r$.
	\end{enumerate}
	\begin{proof}
	(i)	It follows from \autoref{lem:dim_biProj} and the well-known formula $\dim\left(\Rees_R(\I)\right)=\dim(R)+1$.
	
	(ii) The proof is analogous to the one of (i).
	\end{proof}
\end{corollary}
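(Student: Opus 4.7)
The plan is to deduce both statements directly from \autoref{lem:dim_biProj} combined with the standard dimension formula for Rees algebras. The key observation is that $\mathbb{B}(\I) = \biProj(\Rees_R(\I))$ and $\mathbb{B}(I) = \biProj(\Rees_{R/\nnn R}(I))$ are integral biprojective schemes — integrality follows because both Rees algebras embed into polynomial rings over the respective (domain) base rings $R$ and $R/\nnn R \simeq \kk[x_0,\ldots,x_r]$, and both $\I$ and $I$ are nonzero (the latter by the standing assumption $\overline{g_i}\neq 0$ in \autoref{NOTATION_GENERAL1}).

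For part (i), I would apply \autoref{lem:dim_biProj} with $m=2$ to get $\dim(\mathbb{B}(\I)) = \dim(\Rees_R(\I)) - 2$. Then I use the well-known identity $\dim(\Rees_R(\I)) = \dim(R) + 1$ (valid for any nonzero ideal in a Noetherian domain), together with $\dim(R) = \dim(A) + r + 1$ coming from $R = A[x_0,\ldots,x_r]$. Putting these together yields $\dim(\mathbb{B}(\I)) = (\dim(A) + r + 1) + 1 - 2 = \dim(A) + r$, as claimed.

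For part (ii), I would repeat the argument verbatim with $R$ replaced by $R/\nnn R$ and $\I$ replaced by $I$. Since $R/\nnn R \simeq \kk[x_0,\ldots,x_r]$ has Krull dimension $r+1$, the same chain of equalities delivers $\dim(\mathbb{B}(I)) = (r+1) + 1 - 2 = r$.

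There is no real obstacle here; the proof is a short bookkeeping exercise assuming \autoref{lem:dim_biProj} and the Rees-algebra dimension formula. The only subtle point worth checking is integrality, so that \autoref{lem:dim_biProj} applies — but this is immediate from the embeddings $\Rees_R(\I) \hookrightarrow R[t]$ and $\Rees_{R/\nnn R}(I) \hookrightarrow (R/\nnn R)[t]$.
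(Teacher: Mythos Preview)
Your proposal is correct and follows exactly the approach of the paper's proof, which simply cites \autoref{lem:dim_biProj} together with the dimension formula $\dim(\Rees_R(\I))=\dim(R)+1$ and declares part (ii) analogous. You have merely made explicit the arithmetic $\dim(R)=\dim(A)+r+1$ and the integrality check needed to invoke \autoref{lem:dim_biProj}, both of which the paper leaves implicit.
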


\begin{lemma}
	\label{lem:irreduc_comp_Rees}
	Assuming that $I \neq 0$, the following statements hold:
	\begin{enumerate}[\rm (i)]		
		\item   $\mathbb{B}(I)$ is an irreducible component of $\mathbb{B}(\mathcal{I}) \times_{A} \kk$ and, for any irreducible component $\mathcal{Z}$ of $\mathbb{B}(\mathcal{I}) \times_{A} \kk$ other than $\mathbb{B}(I)$, one has that $\mathcal{Z}$ corresponds to an irreducible component of ${\mathbb{E}}(\I) \times_A \kk$ and so
		$
		\dim\left(\mathcal{Z}\right) \le  \dim\left({\mathbb{E}}(\I) \times_A \kk\right).
		$
		
		\item Let $k\ge 0$ be an integer such that $\ell(\mathcal{I}_{\mathfrak{P}}) \le \HT(\mathfrak{P}/\nnn R) + k$ for every prime ideal $\mathfrak{P} \in \Spec( R)$ containing $(\mathcal{I},\nnn)$.
		Then 
		$ \dim\left({\mathbb{E}}(\I) \times_A \kk\right)\le r+k-1.
		$
		In particular,
		$
		\dim\left(\mathbb{B}(\mathcal{I}) \times_{A} \kk\right)\le  \max\{r,r+k-1\}.
		$		
		
		\item Let $\mathfrak{u}$ be the generic point of $\mathbb{B}(I)$ and $W=\Spec(D) \subset \mathbb{B}(\I) \times_{A} \kk$ be an affine open subscheme such that $p_1(\mathfrak{u}) \in W$.
		The closed immersion $p_1$ yields the natural short exact sequence $0 \rightarrow \qqq \rightarrow D \rightarrow B \rightarrow 0$, where $\Spec(B)=p_1^{-1}(W)$.
		Then, one has $\qqq \not\in \Supp_{D}(\qqq)$.
	\end{enumerate}
\end{lemma}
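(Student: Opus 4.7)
The plan is to translate each assertion into a statement about the bigraded ring $\Rees_R(\mathcal{I})\otimes_A \kk$ via the identification $\mathbb{B}(\mathcal{I})\times_A \kk = \biProj(\Rees_R(\mathcal{I})\otimes_A \kk)$, and then invoke \autoref{prop:minimal_primes_tensor_Rees}.

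For (i), the closed immersion $p_1$ of \autoref{lem:diagram_closed_immersions} identifies $\mathbb{B}(I)$ with the closed subscheme of $\mathbb{B}(\mathcal{I})\times_A \kk$ cut out by $\ker(\mathfrak{s})$. By \autoref{prop:minimal_primes_tensor_Rees}(i), $\ker(\mathfrak{s})$ is a minimal prime of $\Rees_R(\mathcal{I})\otimes_A \kk$, so $\mathbb{B}(I)$ is an irreducible component. Any other irreducible component $\mathcal{Z}$ comes from a minimal prime $\mathfrak{Q}\neq \ker(\mathfrak{s})$ which, by the same proposition, must contain $\mathcal{I}$ and descend to a minimal prime of $\gr_{\mathcal{I}}(R)\otimes_A \kk$. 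Hence $\mathcal{Z}\subseteq \biProj(\gr_{\mathcal{I}}(R)\otimes_A \kk)=\mathbb{E}(\mathcal{I})\times_A \kk$, yielding the dimension inequality.

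Part (ii) follows by combining (i) with \autoref{prop:minimal_primes_tensor_Rees}(ii): one has $\dim(\gr_{\mathcal{I}}(R)\otimes_A\kk)\le \dim(R/\nnn R)+k=r+1+k$, and \autoref{cor:dim_mult_proj_sub_scheme} subtracts $2$ upon passing from a bigraded algebra to its $\biProj$, yielding $\dim(\mathbb{E}(\mathcal{I})\times_A\kk)\le r+k-1$. The ``in particular'' clause then merges this estimate with $\dim(\mathbb{B}(I))=r$ from \autoref{cor_dim_blow_ups}(ii).

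Part (iii), which I expect to be the main obstacle, rests on the annihilator identity \autoref{eq:vanishing_ker_specialization}, namely $\mathcal{I}^l\cdot \ker(\mathfrak{s})=0$. The affine chart $W=\Spec(D)$ arises as the degree-$(0,0)$ part of a bihomogeneous localization of $\Rees_R(\mathcal{I})\otimes_A \kk$, so the ideal $\qqq\subset D$ is the image of $\ker(\mathfrak{s})$ and inherits $\mathcal{I}^l D\subseteq \Ann_D(\qqq)$. To obtain $\qqq\notin \Supp_D(\qqq)$ it suffices to exhibit an element of $\Ann_D(\qqq)$ outside of $\qqq$, which I would do by observing that the image of $\mathcal{I}$ in $B\simeq D/\qqq$ is nonzero: indeed $B$ is a nonempty affine chart of $\mathbb{B}(I)=\biProj(\Rees_{R/\nnn R}(I))$, hence a localization of the domain $\Rees_{R/\nnn R}(I)$, in which $I$ does not vanish since $I\neq 0$ by hypothesis. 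Alternatively, \autoref{prop:minimal_primes_tensor_Rees}(iv) already yields $D_{\qqq}\xrightarrow{\simeq} B_{\qqq}$, which forces $\qqq D_{\qqq}=0$ directly. The only delicacy is thus the bihomogeneous-localization bookkeeping connecting the global identity \autoref{eq:vanishing_ker_specialization} to the local statement on $D$.
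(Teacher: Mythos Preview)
Your proposal is correct and follows essentially the same approach as the paper. For (i) and (ii) you invoke \autoref{prop:minimal_primes_tensor_Rees}(i),(ii) together with \autoref{cor:dim_mult_proj_sub_scheme}, exactly as the paper does; for (iii), the paper takes precisely your ``alternative'' route, identifying $D_{\qqq}\to B_{\qqq}$ with the stalk map at $p_1(\mathfrak{u})$ and applying \autoref{prop:minimal_primes_tensor_Rees}(iv) to conclude $\qqq_{\qqq}=0$, so your bihomogeneous-localization bookkeeping in the first approach is not needed.
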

\begin{proof}
	(i) and (ii) are clear from \autoref{cor:dim_mult_proj_sub_scheme},
	 and \autoref{prop:minimal_primes_tensor_Rees}(i),(ii).
	 
	 (iii) Note that the localized map $D_\qqq \rightarrow B_\qqq$ coincides with the map on stalks 
	 $$
	 D_\qqq = \OO_{\mathbb{B(\mathcal{I})} \times_A \kk, \, p_1(\mathfrak{u})} \,=\, 	{\left(\Rees_R(\mathcal{I}) \otimes_A \kk\right)}_{\large(\ker(\mathfrak{s})\large)} \,\rightarrow\, {\left(\Rees_{ R/\nnn R}(I)\right)}_{\large(\ker(\mathfrak{s})\large)} \,=\, \OO_{\mathbb{B}(I),\, \mathfrak{u}} \,=\, B_\qqq.
	 $$
	 Then, \autoref{prop:minimal_primes_tensor_Rees}(iv) implies that $\qqq_{\qqq} = 0$. 
\end{proof}

\subsection{Main specialization result}

The following proposition reduces the main features of \autoref{diag:closed_immersions} to an affine setting over which $\Pi$ and $\pi$ are finite morphisms. In the following result one keeps the notation employed in \autoref{diag:closed_immersions}.

\begin{proposition}
	\label{prop:find_open_set}
	Under \autoref{geometric_setup}, assume that both $\GG$ and $\bgg$ are generically finite. 
	Referring to \autoref{diag:closed_immersions}, let $\xi$ be the generic point of $\Proj(\kk[\mathbf{\overline{g}}])$ and set $w = q_2(q_1(\xi)) \in \Proj(A[\mathbf{g}])$.
	Then, the following statements are satisfied:
	\begin{enumerate}[\rm (i)]
		\item 
		$V=\{y \in \Proj(A[\mathbf{g}]) \mid \Pi^{-1}(y) \text{ is a finite set} \}$ is a dense open subset in $\Proj(A[\mathbf{g}])$ and the restriction $\Pi^{-1}(V) \rightarrow V$ is a finite morphism.
		
		\item 
		If $\dim\left(\mathbb{E}(\I) \times_{A} \kk \right) \le r$,  then there exists an affine open subset $U = \Spec(\mathfrak{C}) \subset \Proj(A[\mathbf{g}])$ containing $w \in U$ and one obtains a commutative diagram of affine schemes 
		\begin{equation*}
		\label{diag:affine_restrictions}
		\begin{tikzpicture}[baseline=(current  bounding  box.center)]
		\matrix (m) [matrix of math nodes,row sep=1.8em,column sep=10em,minimum width=2em, text height=1.5ex, text depth=0.25ex]
		{
			\Spec(B) = p_1^{-1}(p_2^{-1}(\Pi^{-1}(U))) &  \Spec(C) = q_1^{-1}(q_2^{-1}(U)) \\
			\Spec(\mathfrak{B} \otimes_{A} \kk) = p_2^{-1}(\Pi^{-1}(U)) & \Spec(\mathfrak{C} \otimes_{A} \kk) = q_2^{-1}(U)\\
			\Spec(\mathfrak{B}) = \Pi^{-1}(U) &  U = \Spec(\mathfrak{C}) \\
		};
		\path[-stealth]
		(m-1-1) edge node [above]	{${\pi\mid}_{\Spec\left(B\right)}$} (m-1-2)
		(m-3-1) edge node [above]	{${\Pi\mid}_{\Spec\left(\mathfrak{B}\right)}$} (m-3-2)
		(m-2-1) edge node [above]	{${\Pi\mid}_{\Spec\left(\mathfrak{B}\right)} \times_{A} \kk$} (m-2-2)
		(m-1-1) edge (m-2-1)
		(m-1-2) edge (m-2-2)
		(m-2-1) edge (m-3-1)
		(m-2-2) edge (m-3-2)
		;		
		\end{tikzpicture}	
		\end{equation*}	
		where ${\pi\mid}_{\Spec\left(B\right)}$ and ${\Pi\mid}_{\Spec\left(\mathfrak{B}\right)}$ are finite morphisms.
		
		\item If $\dim\left(\mathbb{E}(\I) \times_{A} \kk \right) \le r-1$, one has the isomorphisms 
		$$
		\Pi^{-1}(w) \simeq \Spec\big(\mathfrak{B} \otimes_{\mathfrak{C}} \Quot(C)\big) \simeq \Spec\big(B \otimes_{C} \Quot(C)\big) \simeq \pi^{-1}(\xi).
		$$
	\end{enumerate}
\end{proposition}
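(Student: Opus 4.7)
For \emph{part (i)}, the plan is simply to note that $\Pi$ is proper (being projective over $\Proj(A[\mathbf{g}])$) and generically finite (since $\deg(\Pi)=\deg(\GG)<\infty$ by hypothesis); upper semi-continuity of fiber dimension then ensures that the locus of points of $\Proj(A[\mathbf{g}])$ with zero-dimensional fiber is open and dense, and properness together with zero-dimensional fibers forces finiteness. This delivers $V$ and the finite restriction.

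For \emph{part (ii)}, the crux is to show $w\in V$. I would use the closed immersion $p_2$ (it is the base change of $\Spec\kk\hookrightarrow\Spec A$) to identify $\Pi^{-1}(w)$ with $(\Pi\times_A\kk)^{-1}(y)$, where $y=q_1(\xi)$, and then analyse this fiber via the irreducible decomposition of $\mathbb{B}(\I)\times_A\kk$ provided by \autoref{lem:irreduc_comp_Rees}(i): one component is $\mathbb{B}(I)$ of dimension $r$, and the remaining components $Z_i$ lie in $\mathbb{E}(\I)\times_A\kk$, each of dimension $\le r$ by hypothesis. Restricted to $\mathbb{B}(I)$ the map is $\pi$, and $\pi^{-1}(\xi)$ is finite because $\bgg$ is generically finite. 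Now $\overline{\{y\}}=q_1(\Proj(\kk[\overline{\mathbf{g}}]))$ has dimension $r$, again by generic finiteness of $\bgg$. For any $Z_i$ whose image closure contains $y$, the inclusion $\overline{\{y\}}\subseteq\overline{(\Pi\times_A\kk)(Z_i)}$ combined with the bound $\dim Z_i\le r$ forces equality of these closures, whence the fiber-dimension theorem yields a zero-dimensional generic fiber. Hence $\Pi^{-1}(w)$ is finite, so $w\in V$. One then takes any affine open $U=\Spec(\mathfrak{C})\subseteq V$ containing $w$; its preimage $\Pi^{-1}(U)$ is affine (being finite over an affine), say $\Spec(\mathfrak{B})$, and the remainder of the commutative diagram arises from successive base changes along $p_2,q_2$ and then $p_1,q_1$.

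For \emph{part (iii)}, under the strict bound $\dim(\mathbb{E}(\I)\times_A\kk)\le r-1$ each $Z_i$ has dimension strictly less than $r=\dim\overline{\{y\}}$, so $y$ cannot lie in the closure of $(\Pi\times_A\kk)(Z_i)$; the fiber $(\Pi\times_A\kk)^{-1}(y)$ therefore reduces set-theoretically to $\pi^{-1}(\xi)$. For the scheme-theoretic statement, surjectivity of the composition $\mathfrak{C}\twoheadrightarrow\mathfrak{C}\otimes_A\kk\twoheadrightarrow C$ gives $C=\mathfrak{C}/\pp_w$ and hence the residue field identity $\kappa(w)=\Quot(C)$; this yields the two outer isomorphisms $\Pi^{-1}(w)\simeq\Spec(\mathfrak{B}\otimes_{\mathfrak{C}}\Quot(C))$ and $\pi^{-1}(\xi)\simeq\Spec(B\otimes_C\Quot(C))$. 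The middle isomorphism reduces to vanishing, after $\otimes_C\Quot(C)$, of the kernel $K$ of the surjection $\mathfrak{B}\otimes_{\mathfrak{C}}C\twoheadrightarrow B$, which is a quotient of the ideal $\qqq$ from \autoref{lem:irreduc_comp_Rees}(iii). That lemma produces an element outside $\qqq$ annihilating $\qqq$; its image in $C$ is a nonzero element annihilating $K$, so $K$ is $C$-torsion and vanishes after inverting to $\Quot(C)$.

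The main obstacle is the fiber analysis in part (ii), which must convert a global bound on $\dim(\mathbb{E}(\I)\times_A\kk)$ into control over the specific fiber of $\Pi\times_A\kk$ at the distinguished point $y=q_1(\xi)$; this is possible only because $\overline{\{y\}}$ has dimension exactly $r$, matching the threshold in the hypothesis.
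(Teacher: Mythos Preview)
Parts (i) and (ii) are correct and follow essentially the paper's approach; your component-by-component argument in (ii) is a mild variant of the paper's global dimension count on $\Pi^{-1}(\Proj(\kk[\overline{\mathbf{g}}]))\hookrightarrow\mathbb{B}(\I)\times_A\kk$, but reaches the same conclusion $w\in V$.

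Part (iii) has a genuine gap. The element $s$ you extract from \autoref{lem:irreduc_comp_Rees}(iii) lives in $\mathfrak{B}\otimes_A\kk$, not in $\mathfrak{C}\otimes_A\kk$; there is no ring map from $\mathfrak{B}\otimes_A\kk$ to $C$, so ``its image in $C$'' has no meaning. What you actually obtain is a nonzero element $\tilde{s}$ of the ring $\mathfrak{B}\otimes_{\mathfrak{C}}C$ annihilating $K$, but since $\mathfrak{B}\otimes_{\mathfrak{C}}C$ need not be a domain, this alone does not force $K\otimes_C\Quot(C)=0$. Your set-theoretic observation is the right ingredient, but you have not used it in the scheme-theoretic step.

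The fix, which is what the paper does, is to feed the dimension hypothesis directly into a support computation. Your observation that $y\notin\overline{(\Pi\times_A\kk)(Z_i)}$ for every component $Z_i\neq\mathbb{B}(I)$ says exactly that no minimal prime of $\mathfrak{B}\otimes_A\kk$ other than $\qqq$ lies over $P:=\ker(\mathfrak{C}\otimes_A\kk\to C)$; combined with incomparability for the finite extension $\mathfrak{C}\otimes_A\kk\hookrightarrow\mathfrak{B}\otimes_A\kk$, this gives $\Supp_{\mathfrak{B}\otimes_A\kk}\big((\mathfrak{B}\otimes_A\kk)\otimes_{\mathfrak{C}\otimes_A\kk}\Quot(C)\big)=\{\qqq\}$. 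Since $\qqq\notin\Supp(\qqq)$, the support of $\qqq\otimes_{\mathfrak{C}\otimes_A\kk}\Quot(C)$ is empty, hence this module vanishes, and so does its quotient $K\otimes_C\Quot(C)$. Equivalently, in your language: $\mathfrak{B}\otimes_{\mathfrak{C}}\Quot(C)$ is local with maximal ideal the image of $\qqq$, so $\tilde{s}$ is a unit there --- but establishing this locality is precisely the step your write-up skips.
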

\begin{proof}
	(i)
	Clearly, $\Pi$  is a projective morphism, hence is a proper morphism. 
	Thus, as a consequence of Zariski's Main Theorem (see \cite[Corollary 12.90]{GORTZ_WEDHORN}), the set 
	$$
	V=\{y \in \Proj(A[\mathbf{g}]) \mid \Pi^{-1}(y) \text{ is a finite set} \}
	$$ 
	is open in $\Proj(A[\mathbf{g}])$ and the restriction $\Pi^{-1}(V) \rightarrow V$ is a finite morphism.
	Since $\Pi$ is generically finite,   $V$ is nonempty (see, e.g.,  \cite[Exercise II.3.7]{HARTSHORNE}).
	
	\smallskip
	
	(ii) Since $q_1$ and $q_2$ are closed immersions,  one obtains $k(\xi) \simeq k(w)$ and a closed immersion
	$$
	\Pi^{-1}\left(\Proj\left(\kk[\mathbf{\overline{g}}]\right)\right) \hookrightarrow \Pi^{-1}\left(\Proj(A[\mathbf{g}]) \times_{A} \kk\right) = \mathbb{B(\mathcal{I})} \times_{\Proj(A[\mathbf{g}])} \left(\Proj(A[\mathbf{g}]) \times_{A} \kk\right) = \mathbb{B}(\mathcal{I}) \times_{A} \kk.
	$$
	Also, note that $\Pi^{-1}(w) = \mathbb{B}(\I) \times_{\Proj(A[\mathbf{g}])} k(w) \subset \mathbb{B}(\I) \times_{\Proj(A[\mathbf{g}])}  \Proj\left(\kk[\mathbf{\overline{g}}]\right) = \Pi^{-1}\left(\Proj\left(\kk[\mathbf{\overline{g}}]\right)\right)$.
	From \autoref{lem:irreduc_comp_Rees}(i), the assumption $\dim\left(\mathbb{E}(\I) \times_{A} \kk \right) \le r$, and the fact that $\bgg$ is generically finite, it follows that 
	$$
	\dim\left(\mathbb{B}(\mathcal{I}) \times_{A} \kk\right) =r=\dim\left({\Proj(\kk[\mathbf{\overline{g}}])}\right).
	$$
	So, the restriction map 
	$$\Psi
	: \Pi^{-1}\left(\Proj\left(\kk[\mathbf{\overline{g}}]\right)\right)  \rightarrow \Proj\left(\kk[\mathbf{\overline{g}}]\right)
	$$ is also generically finite, and the fiber $\Psi^{-1}(\xi) \simeq \Pi^{-1}(w) $ of $\Psi$ over $\xi$ is finite.
	Therefore, $w \in V$.
	
	Take the open set $V$ and shrink it down to an affine open subset  $U:=\Spec\left(\mathfrak{C}\right) \subset V$ such that $w \in U$.
	The scheme $q_1^{-1}(q_2^{-1}(U)) \neq \emptyset$ is also affine because $q_1$ and $q_2$ are affine morphisms (\autoref{lem:diagram_closed_immersions}). 
	Then set
	$
	q_1^{-1}(q_2^{-1}(U)) =: \Spec\left(C\right).
	$
	Since the restriction $\Pi^{-1}(U) \rightarrow U$ is a finite morphism,  $\Pi^{-1}(U)$ is also affine (see, e.g., \cite[Remark 12.10]{GORTZ_WEDHORN}, \cite[Exercise 5.17]{HARTSHORNE}). 
	Set $\Pi^{-1}(U)=:\Spec\left(\mathfrak{B}\right)$.
	Similarly,
	$
	p_1^{-1}(p_2^{-1}(\Pi^{-1}(U))) =: \Spec\left(B\right)
	$ 
	is also affine.
	It is clear that $\Spec(\mathfrak{B} \otimes_{A} \kk) = p_2^{-1}(\Pi^{-1}(U))$ and  $\Spec(\mathfrak{C} \otimes_{A} \kk) = q_2^{-1}(U)$.
	After the above considerations, one gets the claimed commutative diagram.
	
	\smallskip
	
	(iii)			
	Due to the diagram in part (ii), the isomorphisms 
	$$
	\Pi^{-1}(w) \simeq \Spec\big(\mathfrak{B} \otimes_{\mathfrak{C}} \Quot(C)\big) \quad \text{ and } \quad \pi^{-1}(\xi) \simeq \Spec\big(B \otimes_{C} \Quot(C)\big)
	$$ 
	follow.
	The surjections $\mathfrak{C} \twoheadrightarrow\mathfrak{C} \otimes_{A} \kk \twoheadrightarrow C$ yield that $\mathfrak{B} \otimes_{\mathfrak{C}} \Quot(C) \simeq \left(\mathfrak{B} \otimes_{A} \kk\right) \otimes_{\left(\mathfrak{C} \otimes_{A} \kk\right)} \Quot(C)$ and that $B \otimes_{\left(\mathfrak{C} \otimes_{A} \kk\right)} \Quot(C) \simeq B \otimes_{C} \Quot(C)$.
	One has the natural short exact sequence 
	\begin{equation}
	\label{eq:exact_seq_affine}
	0 \rightarrow \qqq \rightarrow \mathfrak{B} \otimes_{A} \kk \rightarrow B \rightarrow 0,
	\end{equation}
	and \autoref{lem:irreduc_comp_Rees}(iii) implies that
	\begin{equation}
	\label{eq:supp_qq}
	\qqq \not\in \Supp_{\mathfrak{B} \otimes_{A} \kk}(\qqq).					
	\end{equation}
	
	Note that $\dim(B) = \dim(\mathbb{B}(I))=r$.
	The assumption $\dim\left(\mathbb{E}(\I) \times_{A} \kk \right) \le r-1$ and \autoref{lem:irreduc_comp_Rees}(i) imply that $\dim\left((\mathfrak{B} \otimes_{A} \kk) / \pp\right) \le r-1$ for any $\pp \in \Spec(\mathfrak{B} \otimes_{A} \kk)$ different from $\qqq$.
	Since the map $\mathbb{g}$ is generically finite, one has $\dim(C)=\dim\left(\Proj(\kk[\mathbf{\overline{g}}])\right)=r$.
	It then follows that
	\begin{equation}
	\label{eq:supp_fiber_B_A_kk}
	\Supp_{\mathfrak{B} \otimes_{A} \kk}\left(\left(\mathfrak{B} \otimes_{A} \kk\right) \otimes_{\left(\mathfrak{C} \otimes_{A} \kk\right)} \Quot(C)\right) = \{\qqq\},
	\end{equation}
	because for any $\pp \in \Supp_{\mathfrak{B} \otimes_{A} \kk}\left(\left(\mathfrak{B} \otimes_{A} \kk\right) \otimes_{\left(\mathfrak{C} \otimes_{A} \kk\right)} \Quot(C)\right)$ one gets an inclusion $C \hookrightarrow (\mathfrak{B} \otimes_{A} \kk)/\pp$, thus implying $\dim\left((\mathfrak{B} \otimes_{A} \kk) / \pp\right) = r$.
	
	Applying the tensor product $- \otimes_{\left(\mathfrak{C} \otimes_{A} \kk\right)} \Quot(C)$ to \autoref{eq:exact_seq_affine} gives the exact sequence 
	$$
	\qqq \otimes_{\left(\mathfrak{C} \otimes_{A} \kk\right)} \Quot(C) \rightarrow \mathfrak{B} \otimes_{\mathfrak{C}} \Quot(C) \rightarrow B \otimes_C \Quot(C) \rightarrow 0.
	$$
	Then, \autoref{eq:supp_qq} and \autoref{eq:supp_fiber_B_A_kk} imply that $\qqq \otimes_{\left(\mathfrak{C} \otimes_{A} \kk\right)} \Quot(C) = 0$.
	Therefore, one obtains the required isomorphism $\Spec\big(\mathfrak{B} \otimes_{\mathfrak{C}} \Quot(C)\big) \simeq \Spec\big(B \otimes_{C} \Quot(C)\big)$.
\end{proof}

Next is the main result of this part.

\begin{theorem}
	\label{THM_REDUCTION_REES}
	Under \autoref{geometric_setup}, suppose that both $\GG$ and $\bgg$ are generically finite.
	\begin{enumerate}
		\item[{\rm (i)}] Assume that the following conditions hold:
		\begin{enumerate}[\rm (a)]
			\item \quad$\Proj(A[\mathbf{g}])$ is a normal scheme.
			\item \quad $\dim\left(\mathbb{E}(\I) \times_{A} \kk \right) \le r$.
			\item \quad $\kk$ is a field of characteristic zero.
		\end{enumerate}
		Then 
		$$
		\deg(\bgg) \le \deg(\GG).
		$$		
	\item[{\rm (ii)}] If $\dim\left(\mathbb{E}(\I) \times_{A} \kk \right) \le r-1$, then
		$$
		\deg(\bgg) \ge \deg(\GG).
		$$
	\item[\rm (iii)] Assuming that $\kk$ is algebraically closed, there is a dense open subset $\mathcal{W} \subset \kk^m$ such that, if $\nnn=(z_1-\alpha_1, \ldots, z_m-\alpha_m)$ with $(\alpha_1, \ldots, \alpha_m)\in \mathcal{W}$, one has
		$$
		\deg(\bgg) = \deg(\GG).
		$$ 
	\item[{\rm (iv)}] Consider the following condition:
	\newline $(I\kern-2.3pt\mathcal{K})$ $k\ge 0$ is a given integer such that $\ell(\mathcal{I}_{\mathfrak{P}}) \le \HT(\mathfrak{P}/{\nnn R}) + k$ for every prime ideal $\mathfrak{P} \in  \Spec(R)$ containing $(\mathcal{I},\nnn)$.
	
	Then:
	\begin{enumerate}
		\item[\rm  $(I\kern-2.3pt\mathcal{K}1)$] If $(I\kern-2.3pt\mathcal{K})$ holds with $k \le 1$, then condition {\rm (b)} of part {\rm (i)} is satisfied. 
		\item[\rm $(I\kern-2.3pt\mathcal{K}2)$] If $(I\kern-2.3pt\mathcal{K})$ holds with $k=0$, then the assumption of {\rm (ii)} is satisfied.
	\end{enumerate}	
	\end{enumerate}
\end{theorem}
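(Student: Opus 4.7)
The proof splits into four parts that share a common affine reduction via \autoref{prop:find_open_set}: restricted to suitable affine opens over the generic points of their images, $\Pi$ and $\pi$ become finite morphisms corresponding to finite extensions $\mathfrak{C} \hookrightarrow \mathfrak{B}$ and $C \hookrightarrow B$ of domains, sitting in the commutative diagram of \autoref{lem:diagram_closed_immersions}. This translates the degrees into generic ranks: $\deg(\GG) = [\Quot(\mathfrak{B}):\Quot(\mathfrak{C})]$ and $\deg(\bgg) = [\Quot(B):\Quot(C)]$.

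I would dispatch (iv) first, as essentially bookkeeping. Under $(I\kern-2.3pt\mathcal{K})$, \autoref{prop:minimal_primes_tensor_Rees}(ii) gives $\dim(\gr_\I(R) \otimes_A \kk) \le r+1+k$, and \autoref{cor:dim_mult_proj_sub_scheme} forces $\dim(\mathbb{E}(\I) \times_A \kk) \le \dim(\gr_\I(R) \otimes_A \kk) - 2 \le r - 1 + k$; specializing to $k \le 1$ and $k=0$ yields $(I\kern-2.3pt\mathcal{K}1)$ and $(I\kern-2.3pt\mathcal{K}2)$ respectively. For (i), I would invoke \autoref{prop:find_open_set}(ii), whose hypothesis is condition (b), to set up the affine framework; by the commutativity of \autoref{diag:closed_immersions}, the prime $\mathfrak{c} \subset \mathfrak{C}$ with $\mathfrak{C}/\mathfrak{c} = C$ satisfies $\mathfrak{c} = \mathfrak{b} \cap \mathfrak{C}$, where $\mathfrak{b} \subset \mathfrak{B}$ is the prime cutting out $B$. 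Normality of $\Proj(A[\mathbf{g}])$ (assumption (a)) makes $\mathfrak{C}$ integrally closed, characteristic zero (assumption (c)) supplies separability, and \autoref{lem:upper_bound_deg} then yields $\deg(\bgg) \le \deg(\GG)$.

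For (ii), the strengthened dimension hypothesis enables \autoref{prop:find_open_set}(iii), giving $\Pi^{-1}(w) \simeq \Spec(\mathfrak{B} \otimes_\mathfrak{C} \Quot(C)) \simeq \Spec(B \otimes_C \Quot(C)) \simeq \pi^{-1}(\xi)$. Since $\pi$ is the finite morphism realizing $\bgg$, $\deg(\bgg) = \dim_{\Quot(C)}(B \otimes_C \Quot(C))$; identifying $\Quot(C)$ with the residue field $k(\mathfrak{c})$ via the middle isomorphism gives $\deg(\bgg) = \dim_{k(\mathfrak{c})}(\mathfrak{B} \otimes_\mathfrak{C} k(\mathfrak{c}))$. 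Upper semi-continuity of fiber dimension for the finite $\mathfrak{C}$-module $\mathfrak{B}$, comparing $\mathfrak{c}$ with the generic point of $\mathfrak{C}$, forces this to be at least the generic rank $[\Quot(\mathfrak{B}):\Quot(\mathfrak{C})] = \deg(\GG)$.

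Part (iii) is the step I expect to be the main obstacle, since neither normality nor characteristic zero is available. My plan is to cut out $\mathcal{W} \subset \kk^m$ (via \autoref{rem:zariski_topology_affine}, using that $\kk$ is algebraically closed) as the intersection of three dense open subsets of $\Spec(A)$: the locus where all $\overline{g_i} \ne 0$, which is the complement of finitely many hypersurfaces; the locus provided by \autoref{lem_dim_general_fiber}, which forces $\dim(\gr_\I(R) \otimes_A \kk) \le r+1$ and hence $\dim(\mathbb{E}(\I) \times_A \kk) \le r-1$ so that (ii) delivers $\deg(\bgg) \ge \deg(\GG)$; and the locus on which $w(\nnn)$ lands in the dense open $V \subset \Spec(\mathfrak{C})$ where $\mathfrak{B}$ is $\mathfrak{C}$-flat. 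The delicate point is verifying that this third condition translates into a dense open in the parameter space $\kk^m$ rather than merely in $\Spec(\mathfrak{C})$; I would address this by a dimension count showing that the closed complement $Z = \Spec(\mathfrak{C}) \setminus V$ intersects the fiber component $\Proj(\kk[\overline{\mathbf{g}}]) \subset \Proj(A[\mathbf{g}]) \times_A \kk$ in a proper subset for generic $\nnn$, because $\dim(Z) < \dim\Proj(A[\mathbf{g}]) = \dim(A) + \dim\Proj(\KK[\mathbf{g}])$. On $\mathcal{W}$, $\mathfrak{B}_\mathfrak{c}$ becomes $\mathfrak{C}_\mathfrak{c}$-free of rank $\deg(\GG)$, which pins $\dim_{k(\mathfrak{c})}(\mathfrak{B} \otimes_\mathfrak{C} k(\mathfrak{c}))$ at exactly $\deg(\GG)$ and upgrades the inequality of (ii) to the desired equality $\deg(\bgg) = \deg(\GG)$.
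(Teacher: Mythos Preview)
Your treatment of parts (i), (ii), and (iv) is correct and matches the paper's argument essentially verbatim: the paper likewise invokes \autoref{prop:find_open_set}(ii) together with \autoref{lem:upper_bound_deg} for (i), \autoref{prop:find_open_set}(iii) plus Nakayama/upper semi-continuity for (ii), and \autoref{lem:irreduc_comp_Rees}(ii) (which packages your appeal to \autoref{prop:minimal_primes_tensor_Rees}(ii) and \autoref{cor:dim_mult_proj_sub_scheme}) for (iv).

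Part (iii), however, is handled in the paper by an entirely different route: it is deduced from \autoref{specializing_mul_fiber} and \autoref{cor:specialization_image}, which establish that for generic $\nnn$ one has $\deg(\bgg)\cdot\deg_{\PP_\kk^s}(Y)=\deg(\bGG)\cdot\deg_{\PP_\KK^s}(\mathbb{Y})$ and $\deg_{\PP_\kk^s}(Y)=\deg_{\PP_\KK^s}(\mathbb{Y})$ separately. Those two theorems rely on the saturated fiber cone machinery of \autoref{sectio_on_saturation}, the local cohomology dimension bounds of \autoref{thm:dim_cohom_bigrad}, the \v{C}ech-complex/Grothendieck-complex argument of \autoref{lem:open_dense_subset_sat_spec}, and the specialization \autoref{lem_general_specialization_Rees_algebra}. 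None of this enters your proposal.

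Your direct plan for (iii) has genuine gaps. First, the affine chart $U=\Spec(\mathfrak{C})$ in \autoref{prop:find_open_set}(ii) is chosen \emph{after} fixing $\nnn$ so as to contain $w=w(\nnn)$; hence the flat locus you want to use lives in an $\nnn$-dependent ring, and the phrase ``the locus on which $w(\nnn)$ lands in the dense open $V\subset\Spec(\mathfrak{C})$'' does not yet define a subset of $\kk^m$. One can remedy this by working with the global finite locus $V\subset\Proj(A[\mathbf{g}])$ from \autoref{prop:find_open_set}(i) and its free locus, but then your dimension count needs the further input that $\dim\Proj(\kk[\overline{\mathbf{g}}])=r$, i.e.\ that $\bgg$ is generically finite for generic $\nnn$; the condition ``all $\overline{g_i}\neq 0$'' does not secure this. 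Supplying that lower bound on $\dim\kk[\overline{\mathbf{g}}]$ is precisely what \autoref{cor:specialization_image} does via \autoref{lem_general_specialization_Rees_algebra}, so your approach, once patched, ends up leaning on the same machinery the paper uses. Second, even granting generic finiteness of $\bgg$, your sketch ``$\dim Z<\dim\Proj(A[\mathbf{g}])$'' must be upgraded to a semi-continuity statement showing that the \emph{fiber} $Z\times_A k(\nnn)$ has dimension $\le r-1$ for generic $\nnn$; this is true (compare the generic fiber $Z\times_A\KK\subsetneq\Proj(\KK[\mathbf{G}])$), but it is a separate argument you have not written.
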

	\begin{proof}
		(i)  Using condition (b), the diagram in \autoref{prop:find_open_set}(ii) corresponds to the following commutative diagram of ring homomorphisms 
		\begin{center}
			\begin{tikzpicture}
			\matrix (m) [matrix of math nodes,row sep=1.8em,column sep=12.5em,minimum width=2em, text height=1.5ex, text depth=0.25ex]
			{
				\mathfrak{C} &  \mathfrak{B} \\
				C & B\\
			};
			\path [draw,->>] (m-1-1) -- (m-2-1);
			\path [draw,->>] (m-1-2) -- (m-2-2);
			\draw[right hook->] (m-1-1)--(m-1-2);			
			\draw[right hook->] (m-2-1)--(m-2-2);			
			;		
			\end{tikzpicture}			
		\end{center}
		with finite horizontal maps, which 	are injective because ${\pi\mid}_{\Spec\left(B\right)}$ and ${\Pi\mid}_{\Spec\left(\mathfrak{B}\right)}$ are dominant morphisms and $B$ and $\mathfrak{B}$ are domains (see \cite[Corollary 2.11]{GORTZ_WEDHORN}).
		Since $\Proj(A[\mathbf{g}])$ is given to be a normal scheme then $\mathfrak{C}$ is integrally closed. 
		By \autoref{lem:upper_bound_deg},
		$$
		\deg(\bgg)=\deg(\pi)=\deg\left({\pi\mid}_{\Spec\left(B\right)}\right) \le \deg\left({\Pi\mid}_{\Spec(\mathcal{B})}\right)=\deg(\Pi)=\deg(\GG).
		$$
	
		(ii) 		
		Let $\eta$ be the generic point of $\Proj(A[\mathbf{g}])$.
		By \autoref{prop:find_open_set}(iii) one has that 
		$$
		\deg(\mathbb{g}) = \dim_{k(\xi)}\left(\OO(\pi^{-1}(\xi))\right) = \dim_{\Quot(C)}\left(\mathfrak{B} \otimes_\mathfrak{C} \Quot(C)\right)
		$$
		where $\mathfrak{B}$ is a finitely generated $\mathfrak{C}$-module and $C \simeq \mathfrak{C} / Q$ for some $Q \in \Spec(\mathfrak{C})$. 
		On the other hand, 
		$$
		\deg(\mathcal{G}) = \dim_{k(\eta)}\left(\OO(\Pi^{-1}(\eta))\right) = \dim_{\Quot(\mathfrak{C})}\left(\mathfrak{B} \otimes_\mathfrak{C} \Quot(\mathfrak{C})\right).
		$$
		Applying Nakayama's lemma one obtains
		$$
		\dim_{\Quot(C)}\left(\mathfrak{B} \otimes_\mathfrak{C} \Quot(C)\right) \ge \dim_{\Quot(\mathfrak{C})}\left(\mathfrak{B} \otimes_\mathfrak{C} \Quot(\mathfrak{C})\right)
		$$
		(see, e.g., \cite[Example III.12.7.2]{HARTSHORNE}, \cite[Corollary 7.30]{GORTZ_WEDHORN}).
		So, the result follows.

	(iii) It follows from \autoref{specializing_mul_fiber} and \autoref{cor:specialization_image}.
		
	(iv) Both  $(I\kern-2.3pt\mathcal{K}1)$ and $(I\kern-2.3pt\mathcal{K}2)$  follow from \autoref{lem:irreduc_comp_Rees}.
		\end{proof}

\subsection{Specialization of the saturated fiber cone and the fiber cone}

This part deals with the problem of specializing saturated fiber cones and fiber cones.
Under certain general conditions it will turn out that the multiplicities of the saturated fiber cone and of the fiber cone remain constant under specialization. 
An important consequence is that under a suitable general specialization of the coefficients, one shows that the degrees of the rational map and the corresponding image remain constant.

The reader is referred to the notation of \autoref{sectio_on_saturation}. 

\begin{setup}\rm \label{NOTA_SAT_SPECIAL_FIB}
	Keep the notation introduced in \autoref{NOTATION_GENERAL1} and \autoref{geometric_setup}.
	Suppose that $\kk$ is an algebraically closed field.
	Let $\KK:=\Quot(A)$ denote the field of fractions of $A$ and
	let $\mathbb{T} $ denote the standard graded polynomial ring $\mathbb{T}:=\KK[x_0,\ldots,x_r]$ over  $\KK$.
	Let  $\GG$ and $\bgg$ be as in \autoref{NOTATION_GENERAL1}.
	
	In addition, let $\bGG$ denote the rational map $\bGG: \PP_\KK^r \dashrightarrow \PP_\KK^s$ with representative $\mathbf{G}=(G_0:\cdots:G_s)$,
	where $G_i$ is the image of $g_i$ along the natural inclusion  $R \hookrightarrow\mathbb{T}$.
	Set $\mathbb{I}:=(G_0,\ldots,G_s) \subset \mathbb{T}$.		
	Finally, let $Y:=\Proj(\kk[\overline{\mathbf{g}}])$ and $\mathbb{Y}:=\Proj(\KK[\mathbf{G}])$ be the images of $\bgg$ and $\bGG$,  respectively (see \autoref{image}). 
\end{setup}

As in \autoref{rem:degree_rat_map_over_KK},
the rational map $\GG$ is generically finite if and only if the rational map $\bGG$ is so, and one has the equality
$
\deg(\GG)=\deg\left(\bGG\right).
$

 Consider the projective $R$-scheme $\Proj_{R\text{-gr}}(\Rees_R(\I))$, where $\Rees_R(\I)$ is viewed as a ``one-sided'' $R$-graded algebra. 

For any $\pp \in \Spec(A)$, let $k(\pp):=A_\pp/\pp A_\pp$.
The fiber $\Rees_R(\I) \otimes_A k(\pp)$ inherits a one-sided structure of a graded $R(\pp)$-algebra, where $R(\pp):=R_\pp/\pp R_\pp=k(\pp)[x_0,\ldots,x_r]$.
Moreover, it has a natural structure as a bigraded algebra over  $R(\pp)[y_0,\ldots,y_s]=R(\pp) \otimes_{A} A[y_0,\ldots,y_s]$.

Therefore, for $0 \le i \le r$ the sheaf cohomology 
\begin{equation}
	\label{eq:cohom_groups_tensored}
	{\mathcal{M}(\pp)}^i := \HH^i\left(\Proj_{R(\pp)\text{-gr}}\left(\Rees_R(\I) \otimes_A k(\pp) \right), \OO_{\Proj_{R(\pp)\text{-gr}}\left(\Rees_R(\I) \otimes_A k(\pp) \right)}\right)
\end{equation}
has a natural structure as a finitely generated graded $k(\pp)[y_0,\ldots,y_s]$-module (see, e.g., \cite[Proposition 2.7]{MULTPROJ}).
In particular, one can consider its Hilbert function 
$$
\mathcal{H}\left({\mathcal{M}(\pp)}^i,k\right):=\dim_{k(\pp)}\left({\left[{\mathcal{M}(\pp)}^i\right]}_k\right).
$$

\begin{lemma}
	\label{lem:open_dense_subset_sat_spec}
	For any given $\pp\in\Spec(A)$, consider the function $\chi_{\pp}:\ZZ \rightarrow \ZZ$ defined by
	$$
	\chi_\pp(k) := \sum_{i=0}^{r} {(-1)}^i \mathcal{H}\left({\mathcal{M}(\pp)}^i,k\right).
	$$
	Then, there exists a dense open subset $\mathcal{U} \subset \Spec(A)$, such that $\chi_{\pp}$	is the same for all $\pp \in \mathcal{U}$.
	\begin{proof}
		Consider the affine open covering  $$\mathcal{W}:={\left(\Spec\left({\Rees_R(\I)}_{(x_i)}\right)\right)}_{0\le i \le r}$$ 
		of $\Proj_{R\text{-gr}}(\Rees_R(\I))$, with corresponding \v{C}ech complex 
		$$
		C^\bullet(\mathcal{W}) : \quad 0\rightarrow \bigoplus_{i}  \Rees_R(\I)_{(x_i)} \rightarrow \bigoplus_{i<j} \Rees_R(\I)_{(x_ix_j)} \rightarrow \cdots \rightarrow \Rees_R(\I)_{(x_0\cdots x_r)} \rightarrow 0.
		$$
		Note that each $C^i(\mathcal{W})$ is a direct sum of homogeneous localizations $\Rees_R(\I)_{\left(x_{l_1} \cdots\; x_{l_i}\right)}$ with $0 \le l_1 < \cdots < l_i \le r$, and that each
		$$
		\Rees_R(\I)_{\left(x_{l_1} \cdots\; x_{l_i}\right)} = A\left[\Big\lbrace\frac{f}{x_{l_1} \cdots\; x_{l_i}} \mid f \in R \text{ and } \deg(f) = i \Big\rbrace,  g_0t, \ldots, g_st\right]
		$$  
		has a natural structure of finitely generated graded algebra over $A$, and the grading comes from the graded structure of $A[\yy] \rightarrow A[\I t], \, y_i \mapsto g_it$ (also, see \autoref{eq_one_side_grading}).
		By using the Generic Freeness Lemma (see, e.g., \cite[Theorem 14.4]{EISEN_COMM}), there exist elements $a_i \in A$ such that each graded component of the localization ${C^i(\mathcal{W})}_{a_i}$ is a free module over $A_{a_i}$.	
		
		Let $D^\bullet$ be the complex given by $D^i = C^i(\mathcal{W})_a$, where $a=a_0a_1\cdots a_r$.
		Hence, now $D^\bullet$ is a complex of graded $A_a[\yy]$-modules and each graded strand ${\left[D^\bullet\right]}_k$ is a complex of free $A_a$-modules. 
		Note that each of the free $A_a$-modules ${\left[D^i\right]}_k$ is almost never finitely generated.

		The $i$-th cohomology of a (co-)complex $F^\bullet$ is denoted by $\HH^i(F^\bullet)$.
		Since each ${\left[D^\bullet\right]}_k$ is a complex of free $A_a$-modules (in particular, flat), \cite[Lemma 1 in p.~47 and Lemma 2 in p.~49]{MUMFORD} yield the existence of complexes $L_k^\bullet$ of finitely generated $A_a$-modules such that 
		\begin{eqnarray}
			\label{eq:Grothendieck_complex}
			\begin{split}
			\HH^i\Big({\left[D^\bullet\right]}_k \otimes_{A_a} k(\pp)\Big) &\simeq \HH^i\Big(L_k^\bullet \otimes_{A_a} k(\pp)\Big)	,\\
			L_k^i \neq 0 \,\text{ only if }\, 0 \le i \le r, \quad	L_k^i \, &\text{ is $A_a$-free for } \, 1 \le i \le r \quad \text{ and } \quad L_k^0 \, \text{ is $A_a$-flat }
			\end{split}			
		\end{eqnarray}
		for all $\pp \in \Spec(A_a) \subset  \Spec(A)$.
		Let $\mathcal{U}:=\Spec(A_a) \subset \Spec(A)$. 
		
		{\sc Claim.}  $\chi_\pp$ is independent of $\pp$ on $ \mathcal{U}$; in other words, for any $\pp\in \mathcal{U}$ and any $\mathfrak{q}\in \mathcal{U}$, one has $\chi_{\pp}(k)=\chi_{\mathfrak{q}}(k)$ for every $k\in\ZZ$.
		
		Consider an arbitrary $\pp \in \mathcal{U}$.
		Since $\Rees_R(\I) \otimes_A k(\pp) \simeq {\Rees_R(\I)}_a \otimes_{A_a} k(\pp)$, then $D^\bullet \otimes_{A_a} k(\pp)$ coincides with the \v{C}ech complex corresponding with the affine open covering 
		$$
		{\left(\Spec\left({\left(\Rees_R(\I) \otimes_A k(\pp)\right)}_{(x_i)}\right)\right)}_{0\le i \le r}
		$$
		of $\Proj_{R(\pp)\text{-gr}}\left(\Rees_R(\I) \otimes_A k(\pp) \right)$.
		Hence, from \autoref{eq:cohom_groups_tensored} and \autoref{eq:Grothendieck_complex}, for any $k\in\ZZ$ there is an isomorphism
		$$
		{\left[{\mathcal{M}(\pp)}^i\right]}_k \simeq \HH^i\Big(L_k^\bullet \otimes_{A_a} k(\pp)\Big).
		$$ 
		But since each $L_k^i$ is a finitely generated flat $A_a$-module, hence $A_a$-projective, it follows that $\dim_{k(\pp)}\left(L_k^i \otimes_{A_a} k(\pp)\right) = \rank_{A_a}\left(L_k^i\right)$ (see, e.g., \cite[Exercise 20.13]{EISEN_COMM}) and that 
		$$
		\sum_{i=0}^{r}{(-1)}^i\dim_{k(\pp)}\left({\left[{\mathcal{M}(\pp)}^i\right]}_k\right) = \sum_{i=0}^{r}{(-1)}^i \rank_{A_a}\left(L_k^i\right).
		$$
		Therefore,  for every $k\in\ZZ$, $\chi_\pp(k) = \sum_{i=0}^{r} {(-1)}^i \mathcal{H}\left({\mathcal{M}(\pp)}^i,k\right)$ does not depend on $\pp$.  
	\end{proof}
\end{lemma}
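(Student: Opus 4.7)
My plan is to compute each cohomology module ${\mathcal{M}(\pp)}^i$ uniformly through \v{C}ech cohomology and then use generic flatness to control the result as $\pp$ varies. First, I would take the standard affine open cover $\{D_+(x_i)\}_{0 \le i \le r}$ of the projective scheme $\Proj_{R\text{-gr}}(\Rees_R(\I))$ and form the associated \v{C}ech complex $C^\bullet$. Each $C^i$ is a finite direct sum of bihomogeneous localizations $\Rees_R(\I)_{(x_{l_1} \cdots x_{l_i})}$, and each such localization carries a natural grading in the $\yy$-variables (via $y_j \mapsto g_j t$) whose graded pieces are finitely generated over $A$, even though the whole localization is not. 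Because finite intersections of the $D_+(x_i)$ are affine and flat base change commutes with \v{C}ech cohomology on affine covers, tensoring $C^\bullet$ with $k(\pp)$ produces a complex that computes ${\mathcal{M}(\pp)}^\bullet$ degree-wise in the $\yy$-grading.

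Next I would apply the graded Generic Freeness Lemma (for a finitely generated $\NN$-graded $A$-algebra whose degree-zero component is $A$): for each $i$ there exists $a_i \in A \setminus \{0\}$ such that every $\yy$-graded piece of $(C^i)_{a_i}$ is free over $A_{a_i}$. Setting $a := a_0 \cdots a_r$ and $D^\bullet := C^\bullet_a$, one obtains a complex of graded $A_a[\yy]$-modules whose every strand $[D^\bullet]_k$ is a bounded complex (in cohomological degrees $0$ through $r$) of flat $A_a$-modules; however, each individual $[D^i]_k$ can still be infinitely generated over $A_a$.

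To remedy this, I would invoke the Grothendieck--Mumford reduction (as in Mumford's \emph{Abelian Varieties}, Chapter~II): a bounded complex of flat modules over a Noetherian ring is quasi-isomorphic to a bounded complex of finitely generated projective modules whose cohomology commutes with arbitrary base change. Applied strand by strand, this yields complexes $L_k^\bullet$ of finitely generated projective $A_a$-modules with $H^i(L_k^\bullet \otimes_{A_a} k(\pp)) \simeq [{\mathcal{M}(\pp)}^i]_k$. Since a finitely generated projective module has locally constant rank on $\Spec(A_a)$, the identity
\[
\chi_\pp(k) \;=\; \sum_{i=0}^r (-1)^i \dim_{k(\pp)} [{\mathcal{M}(\pp)}^i]_k \;=\; \sum_{i=0}^r (-1)^i \rank_{A_a}(L_k^i)
\]
shows that $\chi_\pp$ is constant on the dense open subset $\mathcal{U} := \Spec(A_a) \subset \Spec(A)$.

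The main obstacle is the interaction between the $R$-grading and the $\yy$-grading: the \v{C}ech terms are never finitely generated as $A$-modules, so generic freeness cannot be applied to the full complex at one stroke. Working in the $\yy$-graded category and invoking the graded form of generic freeness is what produces a \emph{single} element $a \in A$ valid simultaneously in all $\yy$-degrees; without this uniformity one would only obtain a dense open depending on $k$, which would not suffice to make $\chi_\pp$ constant on one fixed open.
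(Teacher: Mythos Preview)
Your proposal is correct and follows essentially the same route as the paper: take the \v{C}ech complex for the cover $\{D_+(x_i)\}$, apply graded generic freeness to obtain a single $a \in A$ making every $\yy$-graded strand a bounded complex of free $A_a$-modules, then invoke Mumford's reduction to a bounded complex $L_k^\bullet$ of finitely generated flat (hence projective) $A_a$-modules whose fiberwise cohomology agrees with $[\mathcal{M}(\pp)^i]_k$, and read off $\chi_\pp(k)$ as the alternating sum of ranks. The only point to tighten is your statement of the Grothendieck--Mumford reduction: a bounded complex of flat modules is not automatically perfect---you need that each $H^i([D^\bullet]_k)$ is finitely generated over $A_a$, which here follows because $\Proj_{R\text{-gr}}(\Rees_R(\I))$ is proper over $\Spec(\fib_R(\I))$ and $\fib_R(\I)$ is a finitely generated graded $A$-algebra (the paper leaves this equally implicit when citing Mumford's Lemmas~1 and~2).
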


The following theorem contains the main result of this part. By considering saturated fiber cones, one asks how the product of the degrees of the map and of its image behave under specialization.

\begin{theorem}\label{specializing_mul_fiber}
	Under {\rm \autoref{NOTA_SAT_SPECIAL_FIB}}, suppose that $\GG$ is generically finite.
	Then, there is a dense open subset $\mathcal{V} \subset \kk^m$ such that, if $\nnn=(z_1-\alpha_1, \ldots, z_m-\alpha_m)$ with $(\alpha_1, \ldots, \alpha_m)\in \mathcal{V}$, one has
		$$
		\deg(\bgg)\cdot \deg_{\PP_\kk^s}(Y)  = e\left(\widetilde{\mathfrak{F}_{R/\nnn R}(I)}\right) = e\left(\widetilde{\mathfrak{F}_{\mathbb{T}}(\mathbb{I})}\right) = \deg(\bGG)\cdot \deg_{\PP_\KK^s}(\mathbb{Y}).
		$$ 
	\begin{proof}
		Take a dense open subset $\mathcal{U}$ like the one of \autoref{lem:open_dense_subset_sat_spec}, then from \autoref{lem_dim_general_fiber} one has 
		$$
		\mathcal{V} = \mathcal{U} \,\cap\, 	\big\lbrace \pp \in \Spec(A) \mid \dim\big(\gr_{\mathcal{I}}(R) \otimes_{A} k(\pp)\big) \le r+1 \big\rbrace \cap \MaxSpec(A)
		$$
		is a dense open subset of $\MaxSpec(A)$.
		From \autoref{rem:zariski_topology_affine}, $\mathcal{V}$ induces a dense open subset in $\kk^m$.
		
		From now on, suppose that $\nnn \in \mathcal{V}$.
		
		Let $W:=\Proj_{R(\nnn)\text{-gr}}\left(\Rees_R(\I) \otimes_A \kk \right)$, as in \autoref{eq:cohom_groups_tensored},  $\HH^i(W,\OO_W)={\mathcal{M}(\nnn)}^i$.
		By a similar token, $\HH^i(\mathbb{W},\OO_\mathbb{W})={\mathcal{M}((0))}^i$, where $\mathbb{W}:=\Proj_{R(0)\text{-gr}}\left(\Rees_R(\I) \otimes_A \KK \right)$, with $R(0):=R\otimes_ A\KK=\KK[x_0,\ldots,x_r]$ and $(0)$ denotes the null ideal of $A$.
		
		Now, clearly $(0) \in \mathcal{V}$ and $\nnn \in \mathcal{V}$. Therefore, \autoref{lem:open_dense_subset_sat_spec} yields the equalities 
		\begin{equation}
		\label{eq:Hilbert_polys_W}
		\sum_{i=0}^{r} {(-1)}^i \mathcal{H}\Big(\HH^i(W,\OO_W), k\Big) = \sum_{i=0}^{r} {(-1)}^i \mathcal{H}\Big(\HH^i(\mathbb{W},\OO_\mathbb{W}), k\Big)
		\end{equation}
		for all $k\in \NN$.
		
		By the definition of $\mathcal{V}$ and \autoref{prop:minimal_primes_tensor_Rees} one gets that $\dim\left(\Rees_R(\I) \otimes_{A} \kk\right)=\dim(R/\nnn R)+1$.
		From \autoref{thm:dim_cohom_bigrad}(ii) (by taking $\Rees_R(\I) \otimes_A \kk$ or $\Rees_R(\I) \otimes_A \KK$ as a module over itself), for any $i\ge 1$, one has the inequalities
		$$
		\dim\left(\HH^i(W,\OO_W)\right)\le \dim\left(R/\nnn R\right)-1 \;\text{ and }\; \dim\left(\HH^i(\mathbb{W},\OO_\mathbb{W})\right)\le \dim(\mathbb{T})-1.
		$$
		Therefore, \autoref{eq:Hilbert_polys_W} gives that
		$
		\dim\left(\HH^0(W,\OO_W)\right) = \dim\left(\HH^0(\mathbb{W},\OO_\mathbb{W})\right) = \dim(\mathbb{T}) = \dim\left(R/\nnn R\right),
		$
		and that the leading coefficients of the Hilbert polynomials of $\HH^0(W,\OO_W)$ and  $\HH^0(\mathbb{W},\OO_\mathbb{W})$ coincide, and so $e\left(\HH^0\left(W, \OO_W\right)\right)=e\left(\HH^0\left(\mathbb{W}, \OO_\mathbb{W}\right)\right)$.

		Consider the exact sequence of finitely generated graded $\left(\Rees_R(\I) \otimes_A \kk\right)$-modules
		$$
		0 \rightarrow \ker(\mathfrak{s}) \rightarrow \Rees_R(\I) \otimes_A \kk \xrightarrow{\mathfrak{s}} \Rees_{R/\nnn R}(I) \rightarrow 0
		$$
		where $\mathfrak{s}:\Rees_R(\I) \otimes_A \kk \twoheadrightarrow \Rees_{R/\nnn R}(I)$ denotes the same natural map of \autoref{prop:minimal_primes_tensor_Rees}.
		
		Sheafifying and taking the long exact sequence in cohomology yield an exact sequence of finitely generated graded $\kk[\yy]$-modules
		$$
		0 \rightarrow \HH^0\left(W, \ker(\mathfrak{s})^\sim\right) \rightarrow \HH^0\left(W, \OO_W\right) \rightarrow \HH^0\left(W, {\Rees_{R / \nnn R}(I)}^\sim\right) \rightarrow \HH^1\left(W, \ker(\mathfrak{s})^\sim\right).
		$$
		Note that 
		\begin{align*}
			\widetilde{\fib_{R/\nnn R}(I)} &\simeq \HH^0\left(\Proj_{{R(\nnn)}\text{-gr}}\left(\Rees_{R/\nnn R}(I)\right), \OO_{\Proj_{{R(\nnn)}\text{-gr}}\left(\Rees_{R/\nnn R}(I)\right)}\right) \\
			&\simeq \HH^0\left(W, {\Rees_{R/\nnn R}(I)}^\sim\right)
		\end{align*}
		 (see, e.g., \cite[Lemma III.2.10]{HARTSHORNE}).

		From the definition of $\mathcal{V}$ and \autoref{prop:minimal_primes_tensor_Rees}(iii), it follows that $\dim(\ker(\mathfrak{s})) \le \dim(R/\nnn R)$.		
		Hence \autoref{thm:dim_cohom_bigrad}(ii) gives  
		$$
		\dim\left(\HH^0\left(W, \ker(\mathfrak{s})^\sim\right)\right) \le \dim(R/\nnn R)-1 \, \text{ and } \, \dim\left(\HH^1\left(W, \ker(\mathfrak{s})^\sim\right)\right) \le \dim(R/\nnn R)-2.
		$$
		Therefore, one gets the equality 
		$e\left(\widetilde{\fib_{R/\nnn R}(I)}\right) = e\left(\HH^0\left(W, \OO_W\right)\right)$.
		Since $\widetilde{\mathfrak{F}_{\mathbb{T}}(\mathbb{I})} \simeq \HH^0(\mathbb{W},\OO_\mathbb{W})$,
		summing up yields  
		$$
		e\left(\widetilde{\mathfrak{F}_{R/\nnn R}(I)}\right) = e\left(\HH^0\left(W, \OO_W\right)\right) = e\left(\HH^0\left(\mathbb{W}, \OO_\mathbb{W}\right)\right) = e\left(\widetilde{\mathfrak{F}_{\mathbb{T}}(\mathbb{I})}\right).
		$$
		Finally, by \cite[Theorem 2.4]{MULTPROJ} it follows that 
		$$
		e\left(\widetilde{\mathfrak{F}_{R/\nnn R}(I)}\right) = \deg(\bgg)\cdot \deg_{\PP_\kk^s}(Y)   \quad \text{ and } \quad e\left(\widetilde{\mathfrak{F}_{\mathbb{T}}(\mathbb{I})}\right) = \deg(\bGG)\cdot \deg_{\PP_\KK^s}(\mathbb{Y}),
		$$	
		and so the result is obtained.
	\end{proof}
\end{theorem}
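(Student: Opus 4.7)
The plan is to prove the middle equality $e(\widetilde{\mathfrak{F}_{R/\nnn R}(I)}) = e(\widetilde{\mathfrak{F}_{\mathbb{T}}(\mathbb{I})})$; once this is established, the outer two equalities follow by applying \cite[Theorem 2.4]{MULTPROJ} to the rational maps $\bgg$ and $\bGG$, respectively. To define the required open set, I would intersect the dense open $\mathcal{U} \subset \Spec(A)$ from \autoref{lem:open_dense_subset_sat_spec} (generic constancy of the alternating sum of Hilbert functions) with the dense open set from \autoref{lem_dim_general_fiber} (where $\dim(\gr_\I(R) \otimes_A k(\pp)) \le r+1$) and with $\MaxSpec(A)$, then transport the result to a dense open $\mathcal{V} \subset \kk^m$ via \autoref{rem:zariski_topology_affine}. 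Note that the generic point $(0) \in \Spec(A)$ lies in $\mathcal{U}$ by construction, enabling the comparison between the $\kk$-fiber and the $\KK$-fiber.

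Write $W := \Proj_{R(\nnn)\text{-gr}}(\Rees_R(\I) \otimes_A \kk)$ and $\mathbb{W} := \Proj_{R(0)\text{-gr}}(\Rees_R(\I) \otimes_A \KK)$. By the choice of $\mathcal{V}$ combined with \autoref{prop:minimal_primes_tensor_Rees}(i),(ii), one has $\dim(\Rees_R(\I) \otimes_A \kk) = \dim(R/\nnn R)+1$; similarly $\dim(\Rees_R(\I) \otimes_A \KK) = \dim(\mathbb{T})+1$. Feeding these bounds into \autoref{thm:dim_cohom_bigrad}(ii) (applied with the Rees algebra viewed as a bigraded module over itself) yields $\dim \HH^i(W, \OO_W) \le \dim(R/\nnn R) - 1$ and $\dim \HH^i(\mathbb{W}, \OO_{\mathbb{W}}) \le \dim(\mathbb{T})-1$ for every $i \ge 1$. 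Combining the equality $\chi_{(0)}(k) = \chi_{\nnn}(k)$ provided by \autoref{lem:open_dense_subset_sat_spec} with these strict dimensional inequalities forces the Hilbert polynomials of $\HH^0(W, \OO_W)$ and $\HH^0(\mathbb{W}, \OO_{\mathbb{W}})$ to have the same degree ($=\dim(R/\nnn R)=\dim(\mathbb{T})$) and the same leading coefficient, hence $e(\HH^0(W, \OO_W)) = e(\HH^0(\mathbb{W}, \OO_{\mathbb{W}}))$.

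The last step is to identify these $\HH^0$'s with the relevant saturated fiber cones. Over $\KK$, since $A \hookrightarrow \KK$ is flat, \autoref{lem:props_special_fib_ring}(i),(iv) give $\HH^0(\mathbb{W}, \OO_{\mathbb{W}}) \simeq \widetilde{\mathfrak{F}_{\mathbb{T}}(\mathbb{I})}$ directly. Over $\kk$, the natural surjection $\mathfrak{s}:\Rees_R(\I) \otimes_A \kk \twoheadrightarrow \Rees_{R/\nnn R}(I)$ of \autoref{prop:minimal_primes_tensor_Rees} must be accounted for via the long exact sequence obtained by sheafifying $0 \to \ker(\mathfrak{s}) \to \Rees_R(\I) \otimes_A \kk \to \Rees_{R/\nnn R}(I) \to 0$ on $W$, identifying $\HH^0(W, \Rees_{R/\nnn R}(I)^\sim)$ with $\widetilde{\mathfrak{F}_{R/\nnn R}(I)}$ via \autoref{lem:props_special_fib_ring}(i). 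Since \autoref{prop:minimal_primes_tensor_Rees}(iii) gives $\dim(\ker(\mathfrak{s})) \le \dim(R/\nnn R)$, \autoref{thm:dim_cohom_bigrad}(ii) again bounds both $\dim \HH^0(W, \ker(\mathfrak{s})^\sim)$ and $\dim \HH^1(W, \ker(\mathfrak{s})^\sim)$ by $\dim(R/\nnn R)-1$; the leading coefficients of $\HH^0(W, \OO_W)$ and $\widetilde{\mathfrak{F}_{R/\nnn R}(I)}$ therefore coincide, closing the chain of multiplicity comparisons.

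The hard part is the constancy of $\chi_\pp$ on a dense open set, packaged in \autoref{lem:open_dense_subset_sat_spec}: one extracts the \v{C}ech complex, trims by generic freeness on each graded strand, and applies the Grothendieck--Mumford trick (a bounded complex of finitely generated free modules computing fiber cohomology) to compare generic and closed fibers. Once that technical point is in hand, the rest is a routine bookkeeping of dimensions aided by \autoref{prop:minimal_primes_tensor_Rees} and \autoref{thm:dim_cohom_bigrad}(ii), precisely as outlined above.
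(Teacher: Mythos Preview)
Your proposal is correct and follows essentially the same approach as the paper's proof: the same construction of $\mathcal{V}$, the same use of \autoref{lem:open_dense_subset_sat_spec} to match the alternating-sum Hilbert functions at $(0)$ and $\nnn$, the same appeal to \autoref{thm:dim_cohom_bigrad}(ii) to kill the higher cohomology contributions, and the same treatment of $\ker(\mathfrak{s})$ via \autoref{prop:minimal_primes_tensor_Rees}(iii). The only cosmetic difference is that the paper records the slightly sharper bound $\dim \HH^1(W,\ker(\mathfrak{s})^\sim)\le \dim(R/\nnn R)-2$ (from the $i=1$ case of \autoref{thm:dim_cohom_bigrad}(ii)), whereas you state $\le \dim(R/\nnn R)-1$; either suffices for the multiplicity comparison.
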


The following lemma can be considered as a convenient adaptation of \cite[Theorem 1.5]{TRUNG_SPECIALIZATION} in the current bigraded setting.

As before, let $\AAA$ be the bigraded $A$-algebra $\AAA= R \otimes_A A[y_0,\ldots,y_s]$.

\begin{lemma}
	\label{lem_general_specialization_complexes}
	Let 
	$
	F_\bullet:  0 \rightarrow F_k \xrightarrow{\phi_k} \cdots \xrightarrow{\phi_2} F_1 \xrightarrow{\phi_1} F_0
	$
	be an acyclic complex of finitely generated free bigraded $\AAA$-modules.
	Then, there is a dense open subset $V \subset \kk^m$ such that, if $\nnn=(z_1-\alpha_1,\ldots,z_m-\alpha_m)$ with $(\alpha_1,\ldots,\alpha_m) \in V$, the complex 
	$$
	F_\bullet \otimes_A \kk: \qquad 0 \rightarrow F_k\otimes_A \kk \xrightarrow{\phi_k \otimes_A \kk} \cdots \xrightarrow{\phi_2\otimes_A \kk} F_1\otimes_A \kk \xrightarrow{\phi_1\otimes_A\kk} F_0\otimes_A \kk
	$$
	is acyclic. 
	\begin{proof}
		Clearly, the complex $F_\bullet \otimes_A \KK$, where $\KK=\Quot(A)$, is also acyclic.
		Denote by $\Phi_i$ the map 
		$$
		\Phi_i  = \phi_i \otimes_A \KK : F_i \otimes_A \KK \rightarrow F_{i-1} \otimes_A \KK
		$$
		for $1 \le i \le k$.
				
		From the Buchsbaum-Eisenbud exactness criterion (see, e.g., \cite[Theorem 20.9]{EISEN_COMM}, \cite{BUCHSBAUM_EISENBUD_EXACT}), it follows that $F_\bullet \otimes_A \kk$ is acyclic if $\rank(\phi_i \otimes_A \kk) = r_i$ and $\grade\left(I_{r_i}(\phi_i \otimes_A \kk)\right) = \grade\left(I_{r_i}(\Phi_i)\right)$,
		with $r_i=\rank(\Phi_i) = \rank(\phi_i)$ for $1 \le i \le k$.
		
		It is clear that $\rank(\phi_i\otimes_A \kk) \le r_i$, and the condition $\rank(\phi_i \otimes_A \kk) = r_i$ is satisfied when $(\alpha_1,\ldots,\alpha_m) \in \kk^m$ belongs to the dense open subset $U \subset \kk^m$ which is the complement of the closed subset of $\kk^m$ given as the vanishing of all the $A$-coefficients of the $r_i\times r_i$ minors of $\phi_i$.
		
		Since $\AAA \otimes \kk$ and $\AAA \otimes_A \KK$ are Cohen-Macaulay rings, one has that
		$\grade\left(I_{r_i}(\phi_i \otimes_A \kk)\right) = \HT\left(I_{r_i}(\phi_i \otimes_A \kk)\right)$ and $\grade\left(I_{r_i}(\Phi_i)\right) = \HT\left(I_{r_i}(\Phi_i)\right)$, and that it is enough to check 
		\begin{equation}
			\label{eq_dim_general_fiber_det_ideals}
			\dim\left(\frac{\AAA}{I_{r_i}(\phi_i)} \otimes_A \KK\right) = \dim\left(\frac{\AAA}{I_{r_i}(\phi_i)} \otimes_A \kk\right).
		\end{equation}
		From \cite[Theorem 14.8]{EISEN_COMM} and \autoref{rem:zariski_topology_affine}, there exists a dense open subset $U^\prime \subset \kk^m$ such that, if $(\alpha_1,\ldots,\alpha_m) \in U^\prime$, then the equality \autoref{eq_dim_general_fiber_det_ideals} is satisfied.
		
		Therefore, the result follows by setting $V = U \cap U^\prime$. 
	\end{proof}
\end{lemma}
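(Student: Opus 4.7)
The plan is to apply the Buchsbaum--Eisenbud exactness criterion both to $F_\bullet\otimes_A\KK$ (which is already known to be acyclic) and to $F_\bullet\otimes_A\kk$, and to show that its two numerical conditions---ranks of differentials and heights of determinantal ideals---transfer from the generic fiber to the closed fiber for a suitably generic choice of $(\alpha_1,\ldots,\alpha_m)\in\kk^m$.

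First, I would observe that since $A\hookrightarrow\KK=\Quot(A)$ is flat, the complex $F_\bullet\otimes_A\KK$ is acyclic. Writing $\Phi_i=\phi_i\otimes_A\KK$ and $r_i=\rank(\Phi_i)$, and noting that $\AAA\otimes_A\KK$ is a polynomial ring over a field and hence Cohen--Macaulay, the criterion (see, e.g., \cite[Theorem 20.9]{EISEN_COMM}) ensures that for every $i$ one has the expected ``rank of source equals $r_i+r_{i+1}$'' relation and $\HT(I_{r_i}(\Phi_i))\ge i$. The goal then is to guarantee, on a dense open subset $V\subset\kk^m$, that (a) $\rank(\phi_i\otimes_A\kk)=r_i$ and (b) $\HT(I_{r_i}(\phi_i\otimes_A\kk))\ge i$ for every $i$; since $\AAA\otimes_A\kk$ is again Cohen--Macaulay (a polynomial ring over $\kk$), height equals grade, and so (a)+(b) will yield acyclicity of $F_\bullet\otimes_A\kk$ by another application of Buchsbaum--Eisenbud.

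For (a), the condition $\rank(\phi_i\otimes_A\kk)=r_i$ fails only when all $r_i\times r_i$ minors of $\phi_i$ specialize to zero. Since at least one such minor is a nonzero element of $\AAA$ (it is nonzero in $\AAA\otimes_A\KK$), the locus of bad specializations is the zero set of the $A$-coefficients of these minors, a proper Zariski-closed subset of $\MaxSpec(A)\cong\kk^m$ under the homeomorphism of \autoref{rem:zariski_topology_affine}. Hence there is a dense open $U\subset\kk^m$ on which all rank conditions hold.

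For (b), by upper semicontinuity of fiber dimension (\cite[Theorem 14.8]{EISEN_COMM}) applied to the finitely generated $A$-algebras $\AAA/I_{r_i}(\phi_i)$, there is a dense open $U_i'\subset\Spec(A)$ on which $\dim(\AAA/I_{r_i}(\phi_i)\otimes_A k(\pp))$ attains its generic value $\dim(\AAA/I_{r_i}(\phi_i)\otimes_A\KK)$. Restricting to $\MaxSpec(A)$ and translating via \autoref{rem:zariski_topology_affine} gives a dense open $U'\subset\kk^m$ on which all the height conditions match those over $\KK$. Setting $V=U\cap U'$ (a finite intersection of dense opens) concludes the proof. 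The main subtlety lies in step (b): one must use both the Cohen--Macaulayness of the ambient ring (to replace grade by height) and generic flatness to ensure that heights do not jump up under specialization, whereas step (a) is a direct non-vanishing argument.
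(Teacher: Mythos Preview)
Your proposal is correct and follows essentially the same approach as the paper's proof: both invoke the Buchsbaum--Eisenbud exactness criterion, handle the rank condition by a non-vanishing argument on the $A$-coefficients of the $r_i\times r_i$ minors, and handle the height/grade condition via Cohen--Macaulayness of the polynomial fibers together with upper semicontinuity of fiber dimension (\cite[Theorem 14.8]{EISEN_COMM}) and \autoref{rem:zariski_topology_affine}, then intersect the resulting dense opens. One small wording slip: in your final sentence you say ``heights do not jump up under specialization,'' but the issue is that heights could \emph{drop} (equivalently, fiber dimension could jump up); your actual argument handles this correctly.
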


\begin{lemma}
	\label{lem_general_specialization_Rees_algebra}
	There is a dense open subset $V \subset \kk^m$ such that, if $\nnn=(z_1-\alpha_1,\ldots,z_m-\alpha_m)$ with $(\alpha_1,\ldots,\alpha_m) \in V$, then the natural map
	$
	\mathfrak{s}:\Rees_R(\mathcal{I}) \otimes_A \kk\surjects \Rees_{ R/\nnn R}(I)
	$	
	is an isomorphism.
	\begin{proof}
		To prove that $\mathfrak{s}$ is an isomorphism, it is enough to show that 
		$
		\Tor_1^R\left(\gr_{\I}(R), R/\nnn R\right)=0;
		$
		see \cite[\S 2]{SIMIS_ULRICH_SPECIALIZATION}.
		
		Let 
		$
		F_\bullet : 0 \rightarrow F_k \rightarrow \cdots  \rightarrow F_0
		$
		be the minimal bigraded free resolution of $\gr_{\I}(R)$ as a finitely generated bigraded $\AAA$-module. 
		Let $V \subset \kk^m$ be a dense open subset as in \autoref{lem_general_specialization_complexes} and suppose that $(\alpha_1,\ldots,\alpha_m) \in \kk^m$.
		Thus, since $F_\bullet \otimes_\AAA \AAA/\nnn \AAA$ is an acyclic complex,  one has 
		$
		\Tor_1^\AAA\left(\gr_{\I}(R), \AAA/\nnn \AAA\right)=0.
		$
		 As $\Tor_1^R\left(\gr_{\I}(R), R/\nnn R\right)=\Tor_1^\AAA\left(\gr_{\I}(R), \AAA/\nnn \AAA\right)$, the result is clear.
	\end{proof}
\end{lemma}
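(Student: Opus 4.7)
The plan is to reduce the bijectivity of $\mathfrak{s}$ to a single $\Tor$-vanishing statement and then force that vanishing on a dense open set by feeding a bigraded free resolution of $\gr_\I(R)$ into the specialization-of-complexes criterion \autoref{lem_general_specialization_complexes}.

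First I would identify the kernel of $\mathfrak{s}$ in homological terms. In degree $n$ that kernel is $(\I^n\cap\nnn R)/\nnn\I^n$, and tensoring the short exact sequence $0\to\I^n\to R\to R/\I^n\to 0$ with $R/\nnn R$ over $R$ identifies this with $\Tor_1^R(R/\I^n,R/\nnn R)$. A standard filtration argument (as in \cite[\S 2]{SIMIS_ULRICH_SPECIALIZATION}) then packages the vanishing of all of these into the single condition
\[
\Tor_1^R(\gr_\I(R),R/\nnn R)=0,
\]
so it suffices to exhibit a dense open $V\subset\kk^m$ along which this $\Tor$ vanishes.

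To produce such a $V$, I would take a finite minimal bigraded free resolution $F_\bullet\to\gr_\I(R)\to 0$ by finitely generated bigraded free $\AAA$-modules, which exists since $\AAA=R[\yy]$ is a Noetherian bigraded polynomial ring and $\gr_\I(R)$ is finitely generated over it. Because $\AAA$ is a polynomial extension of $R$, each $F_i$ is also $R$-free, so $F_\bullet$ is likewise an $R$-free resolution of $\gr_\I(R)$ and hence $F_\bullet\otimes_A\kk = F_\bullet\otimes_R (R/\nnn R)$ computes $\Tor_\ast^R(\gr_\I(R),R/\nnn R)$. Applying \autoref{lem_general_specialization_complexes} to the acyclic complex $F_\bullet$ produces a dense open $V\subset\kk^m$ such that $F_\bullet\otimes_A\kk$ remains acyclic for every $(\alpha_1,\ldots,\alpha_m)\in V$; in particular $\Tor_1^R(\gr_\I(R),R/\nnn R)$ vanishes on $V$, which by the first step concludes the proof.

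The only nontrivial conceptual step is the reduction of ``$\mathfrak{s}$ is an isomorphism'' to the single $\Tor$-vanishing against $\gr_\I(R)$, which I would simply import from \cite[\S 2]{SIMIS_ULRICH_SPECIALIZATION} rather than reprove; once this is on board, the remainder is a formal combination of the $R$-flatness of $\AAA$ with the previously established acyclicity-on-a-dense-open lemma.
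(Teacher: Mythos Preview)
Your proposal is correct and follows essentially the same approach as the paper: reduce to $\Tor_1^R(\gr_\I(R),R/\nnn R)=0$ via \cite[\S 2]{SIMIS_ULRICH_SPECIALIZATION}, take a bigraded free $\AAA$-resolution $F_\bullet$ of $\gr_\I(R)$, and invoke \autoref{lem_general_specialization_complexes} to keep $F_\bullet\otimes_A\kk$ acyclic on a dense open. The only cosmetic difference is that the paper phrases the final step as $\Tor_1^\AAA(\gr_\I(R),\AAA/\nnn\AAA)=\Tor_1^R(\gr_\I(R),R/\nnn R)$, whereas you argue directly that the $\AAA$-free resolution is an $R$-free resolution; these are the same observation, rooted in the $R$-flatness of $\AAA$.
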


Now one treats the problem of specializing the fiber cone. 
As a consequence, one shows that the degree of the image of a rational map is stable under specialization. 

\begin{theorem}
	\label{cor:specialization_image}
	Under {\rm \autoref{NOTA_SAT_SPECIAL_FIB}}, suppose that $\GG$ is generically finite.
	Then, there is a dense open subset $\mathcal{Q} \subset \kk^m$ such that, if $\nnn=(z_1-\alpha_1, \ldots, z_m-\alpha_m)$ with $(\alpha_1, \ldots, \alpha_m)\in \mathcal{Q}$, one has
	$$
		\deg_{\PP_\kk^s}(Y)  = e(\kk[\mathbf{\overline{g}}]) = e(\KK[\mathbf{G}]) = \deg_{\PP_\KK^s}(\mathbb{Y}).
	$$		
	\begin{proof}
		Let $V \subset \kk^m$ be the dense open subset of \autoref{lem_general_specialization_Rees_algebra} and assume that $\nnn \in V$, then one has that 
		$
		\Rees_R(\mathcal{I}) \otimes_A \kk \; \simeq \; \Rees_{ R/\nnn R}(I),
		$
		and so one obtains the isomorphism
		$
		{\left[\Rees_R(\mathcal{I}) \otimes_A\kk\right]}_0 \,\simeq\, \kk[\mathbf{\overline{g}}]
		$
		of graded $\kk$-algebras. 
		
		From the Generic Freeness Lemma (see, e.g., \cite[Theorem 14.4]{EISEN_COMM}) and \autoref{rem:zariski_topology_affine}, there exists a dense open subset $W \subset \kk^m$ such that, if $(\alpha_1,\ldots,\alpha_m) \in W$, then 
		$
		 e\left({\left[\Rees_R(\mathcal{I}) \otimes_A\kk\right]}_0\right) = e\left({\left[\Rees_R(\mathcal{I}) \otimes_A\KK\right]}_0\right).
		$
		Therefore, the result follows by taking $\mathcal{Q} = V \cap W$.
	\end{proof}
\end{theorem}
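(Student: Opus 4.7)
The plan is to bridge $\kk[\overline{\mathbf{g}}]$ and $\KK[\mathbf{G}]$ through the $R$-degree zero part of the Rees algebra $\Rees_R(\mathcal{I})$, leveraging two separate genericity statements: one ensuring that specialization commutes with forming the Rees ring, and another ensuring the Hilbert polynomial of the fiber cone is unaltered. Concretely, the identifications $\kk[\overline{\mathbf{g}}] \simeq \fib_{R/\nnn R}(I)$ and $\KK[\mathbf{G}] \simeq \fib_R(\mathcal{I}) \otimes_A \KK$ (via \autoref{eq_isom_Rees_0_special_fiber}) allow one to replace the target equality by $e(\fib_R(\mathcal{I}) \otimes_A \kk) = e(\fib_R(\mathcal{I}) \otimes_A \KK)$ after showing that $\fib_R(\mathcal{I}) \otimes_A \kk \simeq \fib_{R/\nnn R}(I)$ on a suitable open set.

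First, I would apply \autoref{lem_general_specialization_Rees_algebra} to obtain a dense open subset $V \subset \kk^m$ such that, for $\nnn$ corresponding to a point in $V$, the natural surjection $\mathfrak{s} : \Rees_R(\mathcal{I}) \otimes_A \kk \twoheadrightarrow \Rees_{R/\nnn R}(I)$ is an isomorphism. Taking $R$-degree zero yields an isomorphism of graded $\kk$-algebras
$$
\bigl[\Rees_R(\mathcal{I}) \otimes_A \kk\bigr]_0 \;\simeq\; \fib_{R/\nnn R}(I) \;=\; \kk[\overline{\mathbf{g}}].
$$

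Next, I would invoke the Generic Freeness Lemma (\cite[Theorem 14.4]{EISEN_COMM}) applied to the finitely generated graded $A$-algebra $\fib_R(\mathcal{I}) = A[\mathbf{g}]$. This produces $0 \neq a \in A$ such that $\fib_R(\mathcal{I})_a$ is a free $A_a$-module; since the $A$-action preserves the grading, one may choose a homogeneous basis and conclude that each graded piece ${[\fib_R(\mathcal{I})]}_n \otimes_A A_a$ is $A_a$-free of constant rank $r_n$. Hence for every $\pp \in D(a) \subset \Spec(A)$, the Hilbert function of $\fib_R(\mathcal{I}) \otimes_A k(\pp)$ equals $n \mapsto r_n$, independent of $\pp$. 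Specializing at $\pp = \nnn$ and at $\pp = (0)$ gives the equality of Hilbert polynomials, and therefore of multiplicities, $e(\kk[\overline{\mathbf{g}}]) = e(\KK[\mathbf{G}])$. Via the homeomorphism of \autoref{rem:zariski_topology_affine}, $D(a) \cap \MaxSpec(A)$ transfers to a dense open $W \subset \kk^m$. Finally, by \autoref{image} the images are $Y = \Proj(\kk[\overline{\mathbf{g}}])$ and $\mathbb{Y} = \Proj(\KK[\mathbf{G}])$, and the degree of a projective variety equals the multiplicity of its homogeneous coordinate ring, which closes the loop. The dense open set $\mathcal{Q} := V \cap W$ then has the required property.

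The main obstacle I anticipate is ensuring that Step 2 yields a \emph{single} $a \in A$ which simultaneously controls all graded pieces of $\fib_R(\mathcal{I})$, not one $a_n$ per degree — otherwise the pointwise conclusion would be insufficient to guarantee stability of the Hilbert polynomial on a nonempty open subset. This is precisely what the graded form of Generic Freeness delivers, so the obstacle is more conceptual than technical; once one recognizes that the $A_a$-basis of $\fib_R(\mathcal{I})_a$ can be chosen homogeneous, each graded strand automatically becomes $A_a$-free of a prescribed rank. A minor secondary point is the passage from $\Spec(A)$ to $\kk^m$, which is handled uniformly by \autoref{rem:zariski_topology_affine}.
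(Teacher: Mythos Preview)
Your proposal is correct and follows essentially the same route as the paper: first use \autoref{lem_general_specialization_Rees_algebra} to obtain a dense open $V$ on which $\bigl[\Rees_R(\mathcal{I})\otimes_A\kk\bigr]_0\simeq\kk[\overline{\mathbf{g}}]$, then apply Generic Freeness to $\fib_R(\mathcal{I})=A[\mathbf{g}]$ to get a dense open $W$ on which the Hilbert function (hence multiplicity) of the fiber is constant, and set $\mathcal{Q}=V\cap W$. Your expanded discussion of why a single localizing element $a$ handles all graded pieces simultaneously is exactly the point the paper leaves implicit in its citation of \cite[Theorem 14.4]{EISEN_COMM}.
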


\section{Applications}
\label{sec:applications}

In this section, one provides some applications of the previous developments.

\subsection{Perfect ideals of codimension two}
\label{perf_heiht_two}

Here one deals with the case of a rational map $\FF:\PP_{\kk}^{r} \dashrightarrow \PP_{\kk}^{r}$ with a perfect base ideal of height $2$, where $\kk$ is a field of characteristic zero.

The main idea is that one can compute the degree of the rational map (\cite[Corollary 3.2]{MULT_SAT_PERF_HT_2}) when the condition $G_{r+1}$ is satisfied, then a suitable application of \autoref{THM_REDUCTION_REES} gives an upper bound for all the rational maps that satisfy the weaker condition $F_0$.

Below \autoref{NOTATION_GENERAL1} is adapted to the particular case of  a perfect ideal of height $2$.

\begin{notation}
	\label{NOTATION_PERF_HT_2}
	Let $\kk$ be a field of characteristic zero. 
	Let $1 \le \mu_1 \le \mu_2 \le \cdots \le \mu_r $ be integers with $\mu_1+\mu_2+\cdots+\mu_r=d$.
	For $1 \le i \le r+1$ and $1 \le j \le r$,  let 
	$
	\mathbf{z}_{i,j}=\{z_{i,j,1}, z_{i,j,2}, \ldots, z_{i,j,m_j}\}
	$	
	denote a set of variables over $\kk$, of cardinality $m_j:=\binom{\mu_j+r}{r}$ -- the number of coefficients of a polynomial of degree $\mu_j$ in $r+1$ variables.
	
	Let $\mathbf{z}$ be the set of mutually independent variables $\mathbf{z}= \bigcup_{i,j} \mathbf{z}_{i,j}$,  $A$ be the polynomial ring $A=\kk[\mathbf{z}]$, and $R$ be the polynomial ring $R=A[x_0,\ldots,x_r]$.
	Let $\MM$ be the $(r+1)\times r$ matrix with entries in $R$ given by 
	$$
	\MM = \left( \begin{array}{cccc}
		p_{1,1} & p_{1,2} & \cdots & p_{1,r} \\
		p_{2,1} & p_{2,2} & \cdots & p_{2,r}\\
		\vdots & \vdots & & \vdots\\
		p_{r+1,1} & p_{r+1,2} & \cdots & p_{r+1,r}\\
	\end{array}
	\right)	
	$$	
	where each polynomial $p_{i,j} \in R$ is given by 
	$$
	p_{i,j} = z_{i,j,1} x_0^{\mu_j} + z_{i,j,2} x_0^{\mu_j-1}x_1 + \cdots + z_{i,j,m_j-1}x_{r-1}x_r^{\mu_j-1} + z_{i,j,m_j}x_r^{\mu_j}.
	$$
	Fix a (rational) maximal ideal  $\nnn: =  (z_{i,j,k} - \alpha_{i,j,k})\subset A$ of $A$, with $\alpha_{i,j,k} \in \kk$.
		
	Set $\KK:=\Quot(A)$, and denote $\mathbb{T}: =R\otimes_A\KK= \KK[x_0,\ldots,x_r]$ and $R/\nnn R \simeq \kk[x_0,\ldots,x_r]$.
	
	Let $\mathbb{M}$ and $M$ denote respectively the matrix $\MM$ viewed as a matrix  with entries over $\mathbb{T}$ and $R/\nnn R$.
	Let $\{g_0,g_1,\ldots,g_r\} \subset R$ be the ordered signed maximal minors of the matrix $\MM$.
	Then, the ordered signed maximal minors of $\mathbb{M}$ and $M$ are given by $\{G_0,G_1,\ldots,G_r\} \subset \mathbb{T}$	and $\{\overline{g_0},\overline{g_1},\ldots,\overline{g_r}\} \subset R/\nnn R$, respectively, where $G_i  = g_i \otimes_{R} \mathbb{T}$ and $\overline{g_i}= g_i \otimes_{R} \left(R/\nnn R\right)$.
	
	Let $\GG:\PP_A^r \dashrightarrow \PP_A^r$, $\bGG:\PP_\KK^r \dashrightarrow \PP_\KK^r$ and $\bgg:\PP_\kk^r \dashrightarrow \PP_\kk^r$	be the rational maps given by the representatives $(g_0:\cdots:g_r)$,  $(G_0:\cdots:G_r)$ and $(\overline{g_0}:\cdots:\overline{g_r})$, respectively.
\end{notation}

\begin{lemma}
		\label{lem_properties_perf_ht_2}
		The following statements hold:
		\begin{enumerate}[\rm (i)]
			\item The ideal $I_r(\mathbb{M})$ is perfect of height two and satisfies the condition $G_{r+1}.$
			\item The rational map $\bGG:\PP_\KK^r \dashrightarrow \PP_\KK^r$ is generically finite.
		\end{enumerate}
		\begin{proof}
			Let $\mathbb{I}=I_r(\mathbb{M})$.
			
			(i) The claim that $\mathbb{I}$ is perfect of height two is clear from the Hilbert-Burch Theorem (see, e.g., \cite[Theorem 20.15]{EISEN_COMM}).
			
			From \autoref{prop:ht_gen_det_ideals}, $\HT(I_i(\mathbb{M})) \ge\HT(I_i(\MM))\ge r+2-i$ for $1\le i \le r$.
			Since the $G_{r+1}$ condition on $\mathbb{I}$ (see \autoref{Fitting_conditions}) is equivalent to 
			$
			\HT(I_{r+1-i}(\mathbb{M}))=\HT(\Fitt_{i}(\mathbb{I})) > i
			$
			for $1\le i \le r$, one is through.

			(ii) Note that $\mathbb{I}$ is generated in fixed degree $d=\mu_1+\cdots +\mu_r$. Thus, the image of the rational map $\mathbb{G}$ is $\Proj(\KK[[\mathbb{I}]_d])\subset \PP_\KK^r$.
			Then the generic finiteness of $\bGG$ is equivalent to having $\dim (\KK[[\mathbb{I}]_d])=r+1$.
			On the other hand, $\mathfrak{F}_{\mathbb{T}} (\mathbb{I})\simeq \KK[[\mathbb{I}]_d]$.
			Now, as is well-known, $\dim (\mathfrak{F}_{\mathbb{T}} (\mathbb{I}))= \dim \left(\mathfrak{F}_{\mathbb{T}_{\mathfrak{M}}} (\mathbb{I}_{\mathfrak{M}})\right)
			=\ell(\mathbb{I}_{\mathfrak{M}})$, where $\mathfrak{M}=(x_0,\ldots,x_r)\mathbb{T}$ and $\ell$ stands for the analytic spread.
			
			The ideal $\mathbb{I}_{\mathfrak{M}}\subset  \mathbb{T}_{\mathfrak{M}}$ being perfect of height $2$ is a strongly Cohen--Macaulay ideal (\cite[Theorem 0.2 and Proposition 0.3]{Huneke}), hence in particular satisfies the sliding-depth property (\cite[Definitions 1.2 and 1.3]{Ulrich_Vasc_Eq_Rees_Lin_Present}). 
			
			 Since $\mu({\mathbb{I}_\mathfrak{M}})=\dim(\mathbb{T}_{\mathfrak{M}})$, then part (i) and \cite[Theorem 6.1]{HSV_Approx_Complexes_II} (also \cite[Theorem 9.1]{HSV_TRENTO_SCHOOL}) imply that the analytic spread of $\mathbb{I}$ is equal to $\ell(\mathbb{I}_{\mathfrak{M}})=r+1$, as wished.
		\end{proof}
\end{lemma}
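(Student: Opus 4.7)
The first assertion is immediate from the Hilbert--Burch theorem: $\mathbb{M}$ is an $(r+1)\times r$ matrix over the polynomial ring $\mathbb{T}$, and the ideal of maximal minors $I_r(\mathbb{M})$ is nonzero of height two (it has the maximal possible height allowed by Macaulay's bound), hence is perfect of grade $2$. For the condition $G_{r+1}$ I would pass to the Fitting-ideal formulation recalled just after \autoref{Fitting_conditions}: $\mathbb{I}:=I_r(\mathbb{M})$ satisfies $G_{r+1}$ if and only if $\HT(\Fitt_i(\mathbb{I}))>i$ for every $i<r+1$. Since $\mathbb{M}$ is (up to transposition) the Hilbert--Burch syzygy matrix of $\mathbb{I}$, one has $\Fitt_i(\mathbb{I})=I_{r+1-i}(\mathbb{M})$, so the required inequality rewrites as
\[
\HT\bigl(I_j(\mathbb{M})\bigr)\ \ge\ r+2-j \qquad (1\le j\le r).
\]
These bounds come straight from \autoref{prop:ht_gen_det_ideals} applied to the generic polynomial matrix $\MM$ over $R$: each entry $p_{i,j}$ is a generic $\mu_j$-form in an independent set of coefficients $\mathbf{z}_{i,j}$, so the quantity $g$ of that proposition is arbitrarily large, and the bound $\grade(I_j(\MM))\ge r+2-j$ follows in $R$. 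Because $A\hookrightarrow\KK$ is flat, these height inequalities descend (ascend) to $\mathbb{T}=R\otimes_A\KK$.

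\textbf{Plan for part (ii).} Since $\mathbb{I}$ is generated in the single degree $d=\mu_1+\cdots+\mu_r$, the image of $\bGG$ is $\Proj(\KK[[\mathbb{I}]_d])$ and, by \autoref{prop:generically_finite}, the generic finiteness of $\bGG$ is equivalent to $\dim\KK[[\mathbb{I}]_d]=r+1$. Under the natural identification $\KK[[\mathbb{I}]_d]\simeq \mathfrak{F}_{\mathbb{T}}(\mathbb{I})$ and localization at the graded maximal ideal $\mathfrak{M}=(x_0,\ldots,x_r)\mathbb{T}$, this dimension equals the analytic spread $\ell(\mathbb{I}_{\mathfrak{M}})$. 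So it suffices to show $\ell(\mathbb{I}_{\mathfrak{M}})=r+1$. Being perfect of height two, $\mathbb{I}_{\mathfrak{M}}$ is strongly Cohen--Macaulay (Huneke), hence in particular satisfies the sliding depth condition. Combined with the $G_{r+1}$ property proved in (i) and the equality $\mu(\mathbb{I}_{\mathfrak{M}})=r+1=\dim(\mathbb{T}_{\mathfrak{M}})$ (the maximal minors are $r+1$ in number), the approximation-complex theorem of Herzog--Simis--Vasconcelos (\cite[Theorem~9.1]{HSV_TRENTO_SCHOOL}, or \cite[Theorem~6.1]{HSV_Approx_Complexes_II}) gives $\ell(\mathbb{I}_{\mathfrak{M}})=\mu(\mathbb{I}_{\mathfrak{M}})=r+1$, as required.

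\textbf{Main obstacle.} The only delicate bookkeeping is extracting the correct height bounds on all the minor ideals $I_j(\mathbb{M})$ from the genericity of the coefficients $\mathbf{z}_{i,j}$; once \autoref{prop:ht_gen_det_ideals} supplies this information, part (i) is mechanical and part (ii) is a fairly direct application of the classical machinery for strongly Cohen--Macaulay ideals satisfying $G_s$. A secondary point to verify carefully is that the ``fiber cone'' identification $\KK[[\mathbb{I}]_d]\simeq \mathfrak{F}_{\mathbb{T}}(\mathbb{I})$ behaves as expected under localization at $\mathfrak{M}$, so that $\dim \mathfrak{F}_{\mathbb{T}}(\mathbb{I})=\ell(\mathbb{I}_{\mathfrak{M}})$; this is standard because $\mathbb{I}$ is generated by homogeneous elements of the same degree.
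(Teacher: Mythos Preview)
Your proposal is correct and follows essentially the same route as the paper's proof: Hilbert--Burch for perfectness, \autoref{prop:ht_gen_det_ideals} for the height bounds on the minor ideals yielding $G_{r+1}$, and then the strongly Cohen--Macaulay/sliding-depth machinery of Huneke and Herzog--Simis--Vasconcelos to compute the analytic spread. One minor imprecision: the quantity $g$ in \autoref{prop:ht_gen_det_ideals} is not ``arbitrarily large'' here but equals $r+1$, since each $I_j$ is the ideal generated by all degree-$\mu_j$ monomials in $x_0,\ldots,x_r$, i.e.\ a power of $(x_0,\ldots,x_r)$; this value is nonetheless exactly what is needed, as $\min\{r+2-t,\,r+1\}=r+2-t$ for $1\le t\le r$.
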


The main result of this subsection is a straightforward application of the previous developments.

\begin{theorem}\label{specializing_degree_HB}
	Let $\kk$ be a field of characteristic zero and let  $B = \kk[x_0,\ldots, x_r]$ denote a polynomial ring over $\kk$.
	Let $I\subset B$ be a perfect ideal of height two minimally generated by $r+1$ forms $\{f_0,f_1,\ldots,f_r\}$ of the same degree $d$ and with a Hilbert-Burch resolution of the form
	$$
	0 \rightarrow \bigoplus_{i=1}^rB(-d-\mu_i) \xrightarrow{\varphi} {B(-d)}^{r+1} \rightarrow I\rightarrow 0.			
	$$
	Consider the rational map $\FF:\PP_{\kk}^{r} \dashrightarrow \PP_{\kk}^{r}$ given by 
	$$
	\left(x_0:\cdots:x_r\right) \mapsto 	\big(f_0(x_0,\ldots,x_r):\cdots:f_r(x_0,\ldots,x_r)\big).	
	$$
	When $\FF$ is generically finite and $I$ satisfies the property $F_0$, one has 
	$
	\deg(\FF) \le \mu_1\mu_2\cdots \mu_r.
	$
	In addition, if $I$ satisfies the condition $G_{r+1}$ then  
	$
	\deg(\FF) = \mu_1\mu_2\cdots \mu_r.
	$
	\begin{proof}
		Let the  $\alpha_{i,j,k}$'s introduced in \autoref{NOTATION_PERF_HT_2} stand for the coefficients of the polynomials in the entries of the presentation matrix $\varphi$.
		Then, under \autoref{NOTATION_PERF_HT_2}, there is a natural isomorphism 
		$\Phi: (A/\nnn)[x_0,\ldots,x_r] \xrightarrow{\simeq} B=\kk[x_0,\ldots,x_r]$
		which, when applied to the entries of the matrix $M$, yields the respective entries of the matrix $\varphi$. 
		Thus, it is equivalent to consider the rational map $\bgg:\PP_\kk^r \dashrightarrow \PP_\kk^r$ determined by the representative $(\overline{g_0}:\cdots:\overline{g_r})$ where $\Phi(\overline{g_i})=f_i$.

 		Since $I_r(\mathbb{M})$ satisfies the $G_{r+1}$ condition (\autoref{lem_properties_perf_ht_2}(i)), \cite[Corollary 3.2]{MULT_SAT_PERF_HT_2} implies that $\deg(\bGG)=\mu_1\mu_2\cdots\mu_r$.

 		Since $\GG$ is generically finite by \autoref{lem_properties_perf_ht_2}(ii) and \autoref{rem:degree_rat_map_over_KK}, its image is the whole of $\PP_A^r$, the latter obviously being a normal scheme.
 		In addition, since $I$ satisfies $F_0$, the conditions of \autoref{THM_REDUCTION_REES}(i)(iv) are satisfied, hence
 		$
 		\deg(\FF) =\deg(\bgg) \le \deg(\GG)=\deg(\bGG)= \mu_1\mu_2\cdots\mu_r.
 		$
 		
 		When $I$ satisfies $G_{r+1}$, then the equality $\deg(\FF)=\mu_1\mu_2\cdots\mu_r$ follows directly from \cite[Corollary 3.2]{MULT_SAT_PERF_HT_2}.
	\end{proof}
\end{theorem}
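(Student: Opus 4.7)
The plan is to realize the given rational map $\FF$ as a coefficient specialization of the universal map $\GG$ built in \autoref{NOTATION_PERF_HT_2}, and then to invoke \autoref{THM_REDUCTION_REES} to transfer the degree information from $\GG$ to $\FF$. Concretely, I would read off the coefficients $\alpha_{i,j,k} \in \kk$ of the entries of the given Hilbert--Burch matrix $\varphi$, set $\nnn = (z_{i,j,k}-\alpha_{i,j,k}) \subset A$, and observe that the canonical isomorphism $\Phi\colon (A/\nnn)[x_0,\ldots,x_r] \xrightarrow{\simeq} B$ carries the specialized minors $\overline{g_i}$ to the given generators $f_i$ of $I$. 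Under $\Phi$ the map $\FF$ becomes precisely $\bgg$.

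The generic side is well understood: by \autoref{lem_properties_perf_ht_2}(i)--(ii), the universal ideal $I_r(\mathbb{M})$ over $\KK = \Quot(A)$ is perfect of height two, satisfies $G_{r+1}$, and the corresponding map $\bGG$ is generically finite. Hence \cite[Corollary~3.2]{MULT_SAT_PERF_HT_2} applies over $\KK$ to give $\deg(\bGG) = \mu_1\mu_2\cdots\mu_r$, and \autoref{rem:degree_rat_map_over_KK} upgrades this to $\deg(\GG) = \mu_1\mu_2\cdots\mu_r$ with $\GG$ itself generically finite. Since $\GG\colon \PP_A^r \dashrightarrow \PP_A^r$ is a generically finite rational map between irreducible $A$-schemes of the same relative dimension, its image $\Proj(A[\mathbf{g}])$ must be all of $\PP_A^r$, which is normal because $A$ is regular. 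Hypothesis (a) of \autoref{THM_REDUCTION_REES}(i) is thus verified, and hypothesis (c) holds by the assumption $\mathrm{char}(\kk) = 0$.

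The crux is hypothesis (b), namely $\dim(\mathbb{E}(\mathcal{I}) \times_A \kk) \le r$. For this I would invoke item $(I\kern-2.3pt\mathcal{K}1)$ of \autoref{THM_REDUCTION_REES}(iv) with $k=1$, reducing the task to verifying the analytic-spread bound
\[
\ell(\mathcal{I}_\mathfrak{P}) \le \HT(\mathfrak{P}/\nnn R) + 1 \qquad \text{for every } \mathfrak{P} \in \Spec(R) \text{ containing } (\mathcal{I},\nnn).
\]
Under the bijection $\mathfrak{P} \leftrightarrow \mathfrak{p} := \mathfrak{P}/\nnn R \in \Spec(B)$, such a bound is morally the direct transposition of the $F_0$ hypothesis on $I$ to the generic side. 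I expect this reduction to be the most delicate point of the argument, since the relationship between $\ell(\mathcal{I}_\mathfrak{P})$ and the local numerical data of $I$ at $\mathfrak{p}$ passes through fiber-cone comparisons under the quotient by the regular sequence $(z_1-\alpha_1,\ldots,z_m-\alpha_m)$ contained in $\mathfrak{P}$.

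Once (a), (b), (c) are all in place, \autoref{THM_REDUCTION_REES}(i) delivers
\[
\deg(\FF) = \deg(\bgg) \le \deg(\GG) = \mu_1\mu_2\cdots\mu_r.
\]
For the second assertion, under the stronger $G_{r+1}$ hypothesis on $I$ itself, no specialization is required: one simply applies \cite[Corollary~3.2]{MULT_SAT_PERF_HT_2} directly to $I$ to conclude $\deg(\FF) = \mu_1\mu_2\cdots\mu_r$.
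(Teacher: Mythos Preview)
Your proposal is correct and follows essentially the same route as the paper's own proof: specialize from the universal setup of \autoref{NOTATION_PERF_HT_2}, use \autoref{lem_properties_perf_ht_2} together with \cite[Corollary~3.2]{MULT_SAT_PERF_HT_2} and \autoref{rem:degree_rat_map_over_KK} to compute $\deg(\GG)=\mu_1\cdots\mu_r$, note normality of the image $\PP_A^r$, and apply \autoref{THM_REDUCTION_REES}(i),(iv). The step you flag as ``most delicate'' is in fact routine and is left implicit in the paper as well: for $\mathfrak{P}\supseteq(\mathcal{I},\nnn)$ with $\mathfrak{p}=\mathfrak{P}/\nnn R$, one has $\mu(\mathcal{I}_\mathfrak{P})=\mu(I_\mathfrak{p})$ because the Fitting ideals of $\mathcal{I}$ and $I$ have the same image in the common residue field $k(\mathfrak{P})=k(\mathfrak{p})$, whence $\ell(\mathcal{I}_\mathfrak{P})\le\mu(\mathcal{I}_\mathfrak{P})=\mu(I_\mathfrak{p})\le\HT(\mathfrak{p})+1$ by $F_0$; no fiber-cone comparison is needed.
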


A particular satisfying case is when $\FF$ is a plane rational map. 
In this case $F_0$ is not a constraint at all, and one recovers the result of \cite[Proposition 5.2]{MULTPROJ}.

\begin{corollary}
	Let $\FF:\PP_\kk^2 \dashrightarrow \PP_\kk^2$ be a dominant rational map defined by a perfect base ideal $I$ of height two. Then,
	$
	\deg(\FF) \le \mu_1\mu_2
	$
	and an equality is attained if $I$ is locally a complete intersection at its minimal primes. 
	\begin{proof}
		In this case property $F_0$ comes for free because  $\HT(I_1(\varphi))\ge \HT(I_2(\varphi))=2$ is always the case.
		Also, here l.c.i. at its minimal primes is equivalent to $G_3$.
	\end{proof}
\end{corollary}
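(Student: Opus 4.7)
The strategy is to deduce the corollary directly from \autoref{specializing_degree_HB} applied with $r=2$. That theorem already contains both an upper bound and an equality statement; what remains is to verify, in the plane setting, the two hypotheses that distinguish them: first, that every height-two perfect ideal on three generators automatically satisfies $F_0$; and second, that the $G_{r+1}=G_3$ condition is merely a rewording of ``locally a complete intersection at the minimal primes of $I$''.

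Before invoking \autoref{specializing_degree_HB} I would handle the generic finiteness hypothesis. Because source and target coincide with $\PP_\kk^2$, the dominance of $\FF$ forces its image $Y\subset\PP_\kk^2$ to have dimension $2=\dim\PP_\kk^2$, and \autoref{prop:generically_finite}(ii) then yields $\FF$ generically finite.

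Next I would check $F_0$ through the Fitting-ideal reformulation recorded immediately after \autoref{Fitting_conditions}, namely $\HT(\Fitt_i(I))\ge i$ for all $i\ge 1$. With a Hilbert-Burch matrix $\varphi\colon B^2\to B^3$, one has $\Fitt_1(I)=I_2(\varphi)=I$, of height $2$; and $\Fitt_2(I)=I_1(\varphi)\supseteq I_2(\varphi)=I$, so $\HT(I_1(\varphi))\ge \HT(I)=2$; higher Fitting ideals are the unit ideal. Thus $F_0$ is automatic, and part (i) of \autoref{specializing_degree_HB} gives $\deg(\FF)\le \mu_1\mu_2$.

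Finally I would translate $G_3$ in this setting. Since $\HT(I)=2$, any $\pp\in V(I)$ with $\HT(\pp)\le 2$ is forced to be a minimal prime of $I$, of height exactly two; on such a $\pp$ the requirement $\mu(I_\pp)\le \HT(\pp)=2$ together with $\HT(I_\pp)=2$ is precisely the statement that $I_\pp$ is a complete intersection. Hence the hypothesis of the corollary is literally $G_{r+1}$, and the second half of \autoref{specializing_degree_HB} yields $\deg(\FF)=\mu_1\mu_2$. There is no essential obstacle in this derivation; the substance of the argument sits entirely in \autoref{specializing_degree_HB}, and the only mild care needed is the Fitting-ideal bookkeeping showing that $F_0$ comes for free in three variables.
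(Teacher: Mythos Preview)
Your proposal is correct and follows essentially the same route as the paper: apply \autoref{specializing_degree_HB} with $r=2$, observing that $F_0$ is automatic because $\HT(I_1(\varphi))\ge \HT(I_2(\varphi))=2$, and that $G_3$ coincides with the l.c.i.\ condition at minimal primes since every $\pp\in V(I)$ with $\HT(\pp)\le 2$ is a height-two minimal prime. Your additional care in verifying generic finiteness from dominance via \autoref{prop:generically_finite}(ii) is a detail the paper leaves implicit.
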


Finally, one shows a simple family of plane rational maps where the degree of the map decreases arbitrarily under specialization.

\begin{example}\rm
	Let $m\ge 1$ be an integer.
	Let $A=\kk[a]$ be a polynomial ring over $\kk$.
	Let $R=A[x,y,z]$ be a standard graded polynomial ring over $A$ and consider the following homogeneous matrix 
	$$
	\arraycolsep=.4cm
	\mathcal{M} = \left(
		\begin{array}{ll}
			x & zy^{m-1}\\
			-y & zx^{m-1} + y^m\\
			az & zx^{m-1}
		\end{array}
	\right)
	$$
	with entries in $R$.
	For any $\beta \in \kk$, let $\nnn_\beta:=(a-\beta) \subset A$.
	Let $\I=(g_0,g_1,g_2):=I_2(\mathcal{M}) \subset R$ and $I_\beta:=\left(\pi_\beta(g_0),\pi_\beta(g_1),\pi_\beta(g_2)\right) \subset R/\nnn_\beta R$ be the specialization of $\I$ via the natural map $\pi_\beta:R \twoheadrightarrow R/\nnn_\beta R$.
	
	Let $\GG:\PP_A^2 \dashrightarrow \PP_A^2$ and $\bgg_\beta : \PP_{A/\nnn_\beta}^2 \dashrightarrow \PP_{A/\nnn_\beta}^2$ be rational maps with representatives $(g_0:g_1:g_2)$ and $\left(\pi_\beta(g_0):\pi_\beta(g_1):\pi_\beta(g_2)\right)$, respectively. 
	
	When $\beta=0$, from \cite[Proposition 2.3]{Hassanzadeh_Simis_Cremona_Sat} one has that $\bgg_0 : \PP_{A/\nnn_0}^2 \dashrightarrow \PP_{A/\nnn_0}^2$ is a de Jonqui\`eres map, which is birational.
	On the other hand, if $\beta \neq 0$, then $I_\beta$ satisfies the condition $G_3$ and so one has $\deg\left(\bgg_\beta\right)=m$.
	
	Therefore, it follows that 
	$$
	\deg\left(\bgg_\beta\right) = \begin{cases}
		1 \qquad\,\, \text{ if } \beta=0, 
		\\
		m \qquad \text{ if } \beta\neq 0.
	\end{cases}
	$$
	Also, note that $\deg(\GG)=m$.
	In concordance with \autoref{THM_REDUCTION_REES}, by setting $\kk = \mathbb{Q}$, one can check with \texttt{Macaulay2} \cite{MACAULAY2} that $\dim\big(\gr_{\I}(R) \otimes_{A} A/(a)\big) = 4$ and that for e.g. $\beta = 1$ one has $\dim\big(\gr_{\I}(R) \otimes_{A} A/(a-1)\big) = 3$.
\end{example}

\subsection{Gorenstein ideals of codimension three}

In this subsection, one deals with the case of a rational map $\FF:\PP_{\kk}^{r} \dashrightarrow \PP_{\kk}^{r}$ with a Gorenstein base ideal of height $3$, where $\kk$ is a field of characteristic zero.
The argument is pretty much the same as in the previous part \autoref{perf_heiht_two}, by
drawing on the formulas in \cite[Corollary C]{MULT_GOR_HT_3}.

\begin{notation}
	\label{NOTATION_GOR_HT_3}
	Let $\kk$ be a field of characteristic zero and $D \ge 1$ be an integer.
	For $1 \le i < j \le r+1$,  let 
	$
	\mathbf{z}_{i,j}=\{z_{i,j,1}, z_{i,j,2}, \ldots, z_{i,j,m} \}
	$	
	denote a set of variables over $\kk$, of cardinality $m:=\binom{D+r}{r}$.
	Let $\mathbf{z}= \bigcup_{i,j} \mathbf{z}_{i,j}$,  $A=\kk[\mathbf{z}]$, and  $R=A[x_0,\ldots,x_r]$.
	Let $\MM$ be the alternating $(r+1)\times (r+1)$ matrix with entries in $R$ given by 
	$$
	\MM = \left( \begin{array}{ccccc}
	0 & p_{1,2} & \cdots & p_{1,r} & p_{1,r+1} \\
	-p_{1,2} & 0 & \cdots & p_{2,r}& p_{2,r+1}\\
	\vdots & \vdots & & \vdots &\vdots\\
	-p_{1,r} & -p_{2,r} & \cdots & 0 & p_{r,r+1}\\
	-p_{1,r+1} & -p_{2,r+1} & \cdots & -p_{r,r+1} & 0\\
	\end{array}
	\right)	
	$$	
	where each polynomial $p_{i,j} \in R$ is given by 
	$$
	p_{i,j} = z_{i,j,1} x_0^{D} + z_{i,j,2} x_0^{D-1}x_1 + \cdots + z_{i,j,m-1}x_{r-1}x_r^{D-1} + z_{i,j,m}x_r^{D}.
	$$
	Fix a (rational) maximal ideal  $\nnn: =  (z_{i,j,k} - \alpha_{i,j,k})\subset A$ of $A$, with $\alpha_{i,j,k} \in \kk$.
	
	Set $\KK:=\Quot(A)$, and denote $\mathbb{T}: =R\otimes_A\KK= \KK[x_0,\ldots,x_r]$ and $R/\nnn R \simeq \kk[x_0,\ldots,x_r]$.
	
	Let $\mathbb{M}$ and $M$ denote respectively the matrix $\MM$ viewed as a matrix  with entries over $\mathbb{T}$ and $R/\nnn R$.
	Let $\{g_0,g_1,\ldots,g_r\} \subset R$ be the ordered signed submaximal Pfaffians of the matrix $\MM$.
	Then, the ordered signed submaximal Pfaffians of $\mathbb{M}$ and $M$ are given by $\{G_0,G_1,\ldots,G_r\} \subset \mathbb{T}$	and $\{\overline{g_0},\overline{g_1},\ldots,\overline{g_r}\} \subset R/\nnn R$, respectively, where $G_i  = g_i \otimes_{R} \mathbb{T}$ and $\overline{g_i}= g_i \otimes_{R} \left(R/\nnn R\right)$.
	
	Let $\GG:\PP_A^r \dashrightarrow \PP_A^r$, $\bGG:\PP_\KK^r \dashrightarrow \PP_\KK^r$ and $\bgg:\PP_\kk^r \dashrightarrow \PP_\kk^r$	be the rational maps given by the representatives $(g_0:\cdots:g_r)$,  $(G_0:\cdots:G_r)$ and $(\overline{g_0}:\cdots:\overline{g_r})$, respectively. 
\end{notation}

\begin{lemma}
	\label{lem_properties_Gor_ht_3}
	The following statements hold:
	\begin{enumerate}[\rm (i)]
		\item The ideal ${\rm Pf}_r(\mathbb{M})$ is Gorenstein of height three and satisfies the condition $G_{r+1}.$
		\item The rational map $\bGG:\PP_\KK^r \dashrightarrow \PP_\KK^r$ is generically finite.
	\end{enumerate}
\end{lemma}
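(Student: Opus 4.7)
The plan is to mirror the proof of \autoref{lem_properties_perf_ht_2}, with the Buchsbaum--Eisenbud structure theorem for codimension-three Gorenstein ideals playing the role of Hilbert--Burch. First I would handle part (i), noting that $\mathbb{M}$ is an alternating matrix of odd size $r+1$ (the statement tacitly requires $r$ even, since otherwise all submaximal Pfaffians vanish) whose off-diagonal entries are generic $\kk$-linear combinations of the degree-$D$ monomials in $x_0, \ldots, x_r$. I would then formulate and prove an alternating analogue of \autoref{prop:ht_gen_det_ideals}, recovering the codimension bounds $\HT(\mathrm{Pf}_{2t}(\mathbb{M})) \ge \binom{r-2t+3}{2}$ familiar from the truly generic alternating matrix (Kleppe--Laksov/J\'ozefiak--Pragacz). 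Plugging in $2t=r$ gives $\HT(\mathrm{Pf}_r(\mathbb{M})) \ge 3$; combined with the obvious upper bound $\HT(\mathrm{Pf}_r(\mathbb{M})) \le \pd_\mathbb{T}(\mathbb{T}/\mathrm{Pf}_r(\mathbb{M})) \le 3$ coming from the length of the Buchsbaum--Eisenbud complex, this forces equality, and Buchsbaum--Eisenbud then makes $\mathrm{Pf}_r(\mathbb{M})$ Gorenstein of codimension three with self-dual presentation $\mathbb{T}^{r+1} \xrightarrow{\mathbb{M}} \mathbb{T}^{r+1} \to \mathrm{Pf}_r(\mathbb{M}) \to 0$.

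Condition $G_{r+1}$ translates to $\HT(I_{r+1-i}(\mathbb{M})) > i$ for $0 \le i \le r$. Here I would invoke the classical radical identification $\sqrt{I_k(\mathbb{M})} = \sqrt{\mathrm{Pf}_{2\lfloor k/2\rfloor}(\mathbb{M})}$ for alternating $\mathbb{M}$, which reduces the check to a direct numerical comparison of $i$ with the Pfaffian codimension bounds already obtained. The case $i=0$ is vacuous (the odd-sized determinant is zero), the case $i=1$ gives $\HT(I^2) = 3 > 1$, and for $i \ge 2$ one simply verifies $\binom{2\lceil i/2\rceil+3}{2} > i$.

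For part (ii), I would proceed exactly as in \autoref{lem_properties_perf_ht_2}(ii). Setting $\mathbb{I} := \mathrm{Pf}_r(\mathbb{M})$ and $\mathfrak{M} := (x_0, \ldots, x_r)\mathbb{T}$, generic finiteness of $\bGG$ reduces to $\ell(\mathbb{I}_\mathfrak{M}) = r+1$. A height-three Gorenstein ideal is licci and therefore strongly Cohen--Macaulay by \cite{Huneke}, so in particular satisfies the sliding-depth property of \cite[Definitions 1.2 and 1.3]{Ulrich_Vasc_Eq_Rees_Lin_Present}. Since $\mu(\mathbb{I}_\mathfrak{M}) = r+1 = \dim(\mathbb{T}_\mathfrak{M})$ and (i) supplies $G_{r+1}$, \cite[Theorem 6.1]{HSV_Approx_Complexes_II} (equivalently \cite[Theorem 9.1]{HSV_TRENTO_SCHOOL}) yields $\ell(\mathbb{I}_\mathfrak{M}) = r+1$.

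The main obstacle I expect is the extension of \autoref{prop:ht_gen_det_ideals} to alternating matrices needed in (i). Unlike the rectangular case, the entries of $\mathbb{M}$ are not independent --- they satisfy $p_{j,i} = -p_{i,j}$ --- so the simple ``invert-a-corner-entry and delete its row and column'' localization no longer works. One must instead invert an off-diagonal entry $p_{i,j}$ and perform symmetric row/column operations that clear out both the $i$-th and $j$-th rows and columns simultaneously, reducing to an alternating matrix of size $r-1$ whose entries remain generic linear combinations of degree-$D$ forms over a polynomial extension. This symmetric-reduction step, while standard in the literature on Pfaffian ideals, is the only place where the argument diverges nontrivially from the Hilbert--Burch case.
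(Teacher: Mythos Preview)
Your plan for (ii) matches the paper exactly. For (i), your overall strategy---Buchsbaum--Eisenbud for the Gorenstein claim, then a Pfaffian analogue of \autoref{prop:ht_gen_det_ideals} to verify $G_{r+1}$---is precisely what the paper intends when it cites \cite[Lemma~2.12]{MULT_GOR_HT_3}. However, your execution contains two real errors.

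First, the bound $\HT(\mathrm{Pf}_{2t}(\mathbb{M})) \ge \binom{r-2t+3}{2}$ is impossible as stated: every Pfaffian ideal of $\mathbb{M}$ (or of $\MM$) lies inside $(x_0,\ldots,x_r)$, so its height is at most $r+1$, whereas $\binom{r-2t+3}{2}$ exceeds $r+1$ already for small $t$. The symmetric-reduction analogue of \autoref{prop:ht_gen_det_ideals} does \emph{not} recover the generic Pfaffian codimension; it gives only the \emph{linear} bound $\grade(\mathrm{Pf}_{2t}(\MM)) \ge \min\{r-2t+3,\, r+1\}$, exactly parallel to the linear bound $r-t+1$ in \autoref{prop:ht_gen_det_ideals} (which is likewise far below the generic determinantal codimension). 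Second, the radical identity should read $\sqrt{I_k(\mathbb{M})} = \sqrt{\mathrm{Pf}_{2\lceil k/2\rceil}(\mathbb{M})}$ with a ceiling, not a floor: for an alternating matrix, rank $<k$ means rank $\le 2\lfloor(k-1)/2\rfloor$, which is the vanishing locus of $\mathrm{Pf}_{2\lceil k/2\rceil}$. Using the floor invokes too small a Pfaffian index and hence claims too large a height. With both corrections the check for $G_{r+1}$ still goes through: one needs $\HT(I_{r+1-i}(\mathbb{M}))>i$, and the linear bound delivers $\HT\big(\mathrm{Pf}_{2\lceil(r+1-i)/2\rceil}(\mathbb{M})\big)\ge i+1$ in every case.
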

\begin{proof}
	(i) The first claim follows from the Buchsbaum-Eisenbud structure theorem \cite{BuchsbaumEisenbud}.
	The condition $G_{r+1}$ is proved to be satisfied by an argument similar to \autoref{lem_properties_perf_ht_2}(i) (see \cite[Lemma 2.12]{MULT_GOR_HT_3}).
	
	(ii) Essentially the same proof of \autoref{lem_properties_perf_ht_2}(ii) carries over for this case (see \cite[Theorem 0.2 and Proposition 0.3]{Huneke} and \cite[Theorem 6.1]{HSV_Approx_Complexes_II}).
\end{proof}

\begin{theorem}
	\label{thm:Gor_ht_3}
	Let $\kk$ be a field of characteristic zero, $B$ be the polynomial ring $B = \kk[x_0,\ldots, x_r]$ and $I \subset B$ be a homogeneous ideal in $B$.
	Suppose that the following conditions are satisfied:
	\begin{enumerate}[\rm (i)]
		\item $I$ is a height $3$ Gorenstein ideal.
		\item $I$ is minimally generated by $r+1$ forms $\{f_0,f_1,\ldots,f_r\}$ of the same degree.
		\item Every non-zero entry of an alternating minimal presentation matrix of $I$  has degree $D\ge 1$.
	\end{enumerate}
	Consider the rational map $\FF:\PP_{\kk}^{r} \dashrightarrow \PP_{\kk}^{r}$ given by 
	$$
	\left(x_0:\cdots:x_r\right) \mapsto 	\big(f_0(x_0,\ldots,x_r):\cdots:f_r(x_0,\ldots,x_r)\big).	
	$$
	When $\FF$ is generically finite and $I$ satisfies the property $F_0$, one has 
	$
	\deg(\FF) \le D^r.
	$
	In addition, if $I$ satisfies the condition $G_{r+1}$ then  
	$
	\deg(\FF) = D^r.
	$
\end{theorem}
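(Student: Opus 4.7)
The plan is to mirror the strategy of \autoref{specializing_degree_HB}, replacing the Hilbert--Burch generic deformation by the Pfaffian generic deformation of \autoref{NOTATION_GOR_HT_3}. First, hypothesis (iii) lets me realize an alternating minimal presentation matrix $\varphi$ of $I$ as a specialization of the generic alternating matrix $\MM$: I set the constants $\alpha_{i,j,k}\in\kk$ of \autoref{NOTATION_GOR_HT_3} to be the coefficients of the entries of $\varphi$ (taking $\alpha_{i,j,k}=0$ whenever the corresponding entry of $\varphi$ vanishes). With $\nnn:=(z_{i,j,k}-\alpha_{i,j,k})$, the resulting isomorphism $\Phi:(A/\nnn)[x_0,\ldots,x_r]\xrightarrow{\simeq} B$ sends the signed submaximal Pfaffians $\overline{g_i}$ of the specialized matrix $M$ to the minimal generators $f_i$ of $I$ (up to sign, which is harmless for the associated rational map). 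This identifies $\bgg$ with $\FF$; in particular $\deg(\bgg)=\deg(\FF)$ and $\FF$ satisfies $F_0$ if and only if $\bgg$ does.

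Second, by \autoref{lem_properties_Gor_ht_3} the generic Gorenstein ideal $\mathbb{I}={\rm Pf}_r(\mathbb{M})\subset\mathbb{T}$ satisfies the $G_{r+1}$ condition and the generic map $\bGG:\PP_\KK^r\dashrightarrow\PP_\KK^r$ is generically finite. Applying \cite[Corollary C]{MULT_GOR_HT_3} to $\bGG$ yields $\deg(\bGG)=D^r$, and \autoref{rem:degree_rat_map_over_KK} transfers this to $\deg(\GG)=\deg(\bGG)=D^r$ while ensuring that $\GG$ itself is generically finite. Since $\GG:\PP_A^r\dashrightarrow\PP_A^r$ is generically finite and its image $\Proj(A[\mathbf{g}])$ is an integral closed subscheme of $\PP_A^r$ of maximal dimension, the image coincides with $\PP_A^r$, which is a normal scheme because $A=\kk[\mathbf{z}]$ is regular.

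The final step is the application of \autoref{THM_REDUCTION_REES}(i)(iv). I would verify condition $(I\kern-2.3pt\mathcal{K})$ with $k=1$ via Nakayama: for any $\mathfrak{P}\in\Spec(R)$ containing $(\I,\nnn)$, the inclusion $\nnn R\subseteq\mathfrak{P}$ gives $\mu(\I_\mathfrak{P})=\mu(I_{\mathfrak{P}/\nnn R})$, and the $F_0$ hypothesis on $I$ bounds the right-hand side by $\HT(\mathfrak{P}/\nnn R)+1$. Hence
\[
\ell(\I_\mathfrak{P})\;\le\;\mu(\I_\mathfrak{P})\;\le\;\HT(\mathfrak{P}/\nnn R)+1,
\]
which triggers $(I\kern-2.3pt\mathcal{K}1)$. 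Combined with the normality of the image and the characteristic-zero hypothesis on $\kk$, \autoref{THM_REDUCTION_REES}(i) delivers
\[
\deg(\FF)=\deg(\bgg)\;\le\;\deg(\GG)=D^r.
\]

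When $I$ itself satisfies $G_{r+1}$, the equality $\deg(\FF)=D^r$ is immediate from \cite[Corollary C]{MULT_GOR_HT_3} applied directly to $\FF$, bypassing the specialization machinery. The main obstacle I anticipate is the Nakayama identification of minimal numbers of generators between the generic and specialized settings: one must carefully justify that $\nnn R_\mathfrak{P}\subseteq\mathfrak{P}R_\mathfrak{P}$ sits inside the Jacobson radical so that $\mu(\I_\mathfrak{P})=\mu(I_{\mathfrak{P}/\nnn R})$, and that the generic Pfaffian deformation is flexible enough to encode every admissible alternating presentation matrix. Once these points are confirmed, the remainder is a faithful transcription of the perfect codimension two argument in \autoref{specializing_degree_HB}.
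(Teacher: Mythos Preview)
Your proposal is correct and follows precisely the template the paper indicates: it is the Gorenstein analogue of the proof of \autoref{specializing_degree_HB}, with \autoref{NOTATION_GOR_HT_3}, \autoref{lem_properties_Gor_ht_3}, and \cite[Corollary C]{MULT_GOR_HT_3} replacing their codimension-two counterparts. Your anticipated obstacle is well placed but resolvable: the equality $\mu(\I_{\mathfrak P})=\mu(I_{\mathfrak P/\nnn R})$ follows cleanly from the fact that the columns of $\MM$ are always syzygies of its submaximal Pfaffians while $M=\MM\otimes_R R/\nnn R$ is, by hypothesis, an honest presentation of $I$, so that $\Fitt_i(\I)\cdot(R/\nnn R)=\Fitt_i(I)$; this is the precise form of the ``Nakayama'' step you flagged.
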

\begin{proof}
	The proof follows similarly to \autoref{specializing_degree_HB}.
	But now one uses \autoref{NOTATION_GOR_HT_3}, \autoref{lem_properties_Gor_ht_3} and \cite[Corollary C]{MULT_GOR_HT_3} instead of \autoref{NOTATION_PERF_HT_2}, \autoref{lem_properties_perf_ht_2} and \cite[Corollary 3.2]{MULT_SAT_PERF_HT_2}, respectively.
\end{proof}

\subsection{$j$-multiplicity}

Here one shows that the $j$-multiplicity of an ideal does not change under a general specialization of the coefficients.
The $j$-multiplicity of an ideal was introduced in \cite{ACHILLES_MANARESI_J_MULT},  as a generalization of the Hilbert-Samuel multiplicity for the non-primary case.
It has been largely applied by several authors in its recent history, including in intersection theory (see, e.g., \cite{FLENNER_O_CARROLL_VOGEL}).

Here one uses the notation of \autoref{NOTA_SAT_SPECIAL_FIB}.
Let $\mm$ be the maximal irrelevant ideal $\mm=(x_0,\ldots,x_r) \subset R/\nnn R$.
 The $j$-multiplicity of the ideal $I \subset R/\nnn R$ is given by
$$
j(I) := r!\lim\limits_{n\rightarrow\infty} \frac{\dim_{\kk}\left(\HL^0\left(I^n/I^{n+1}\right)\right)}{n^r}.
$$
The $j$-multiplicity of $\mathbb{I} \subset \mathbb{T}$ is defined in the same way.

\begin{corollary}
	\label{cor:j_mult}
	Under {\rm \autoref{NOTA_SAT_SPECIAL_FIB}}, suppose that $\mathbb{I}$ has maximal analytic spread $\ell(\mathbb{I})=r+1$.
	Then, there is a dense open subset $\mathcal{V} \subset \kk^m$ such that, if $\nnn=(z_1-\alpha_1, \ldots, z_m-\alpha_m)$ with $(\alpha_1, \ldots, \alpha_m)\in \mathcal{V}$, one has
	$$
	j(I) = j(\mathbb{I}).
	$$
\end{corollary}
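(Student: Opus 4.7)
The plan is to reduce the equality $j(I)=j(\mathbb{I})$ to the equality of multiplicities of the saturated fiber cones already furnished by \autoref{specializing_mul_fiber}, via the bridge identity
$$j(J)\;=\;d\cdot e\bigl(\widetilde{\mathfrak{F}(J)}\bigr)$$
which holds for any homogeneous ideal $J$ generated in a single degree $d$ inside a standard graded polynomial ring over a field, provided that the analytic spread $\ell(J)$ equals the ambient dimension. This identity appears to be the essential algebraic content connecting $j$-multiplicity with the invariants developed throughout the paper.

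First I would verify that $\ell(\mathbb{I})=r+1$ places us squarely under the hypotheses of \autoref{specializing_mul_fiber}: this condition means $\dim\bigl(\mathfrak{F}_\mathbb{T}(\mathbb{I})\bigr)=\dim\bigl(\KK[\mathbf{G}]\bigr)=r+1$, so $\mathbb{Y}=\Proj(\KK[\mathbf{G}])$ has dimension $r$ and $\bGG$ is generically finite; by \autoref{rem:degree_rat_map_over_KK}, $\GG$ is generically finite as well. Let $\mathcal{V}_1\subset\kk^m$ be the open dense set provided by \autoref{specializing_mul_fiber}, and shrink it, using \autoref{THM_REDUCTION_REES}(iii), to an open dense subset $\mathcal{V}\subset\mathcal{V}_1$ on which $\bgg$ is also generically finite; this guarantees $\ell(I)=r+1$ on $\mathcal{V}$.

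Next I would apply the bridge identity to both ideals $\mathbb{I}\subset\mathbb{T}$ and $I\subset R/\nnn R$ (which are both generated in the common degree $d=\deg(g_i)$ and have maximal analytic spread after shrinking to $\mathcal{V}$), and insert \autoref{specializing_mul_fiber} in the middle to conclude
$$j(I)\;=\;d\cdot e\bigl(\widetilde{\mathfrak{F}_{R/\nnn R}(I)}\bigr)\;=\;d\cdot e\bigl(\widetilde{\mathfrak{F}_\mathbb{T}(\mathbb{I})}\bigr)\;=\;j(\mathbb{I}).$$

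The main obstacle is establishing (or suitably citing) the bridge identity $j(J)=d\cdot e(\widetilde{\mathfrak{F}(J)})$. I would prove it by dissecting the bigraded module $\bigoplus_{n}\HL^0(J^n/J^{n+1})$ according to its natural $(R\text{-degree},n)$-bigrading: using \autoref{thm:dim_cohom_bigrad} to bound the contributions of the individual $R$-graded strands of the local cohomology of $\gr_J(B)$, and the observation that for $n\gg 0$ the Hilbert asymptotics of $[J^n:\mm^\infty]_j$ share the same leading coefficient $e(\widetilde{\mathfrak{F}(J)})/r!$ across the window $j\in\{nd,\ldots,(n+1)d-1\}$, the $d$ strands contribute equally and summing over $j$ yields the factor $d$. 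A sanity check on $J=\mm^k\subset\kk[x_0,\ldots,x_r]$ confirms that both sides return $k^{r+1}$, matching the normalization convention of the corollary.
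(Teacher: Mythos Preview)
Your approach is essentially the paper's: both reduce to \autoref{specializing_mul_fiber} via the bridge identity $j(J)=d\cdot e\bigl(\widetilde{\mathfrak{F}(J)}\bigr)$, equivalently $j(J)=d\cdot\deg(\text{map})\cdot\deg(\text{image})$. The only difference is that the paper simply cites \cite[Theorem~5.3]{KPU_blowup_fibers} for this identity rather than proving it from scratch.
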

\begin{proof}
	Let $d = \deg(\overline{g_i}) = \deg(G_i)$.
	From \cite[Theorem 5.3]{KPU_blowup_fibers} one obtains 
	$$
	j(I)=d\cdot \deg(\bgg)\cdot \deg_{\PP_\kk^s}(Y)  \quad \text{ and } \quad j(\mathbb{I})= d \cdot  \deg(\bGG)\cdot \deg_{\PP_\KK^s}(\mathbb{Y}).
	$$
	So, the result follows from \autoref{specializing_mul_fiber}.
\end{proof}

\section*{Acknowledgments}

We are grateful to the referee for valuable comments and suggestions for the improvement of this work.
The first named author was funded by the European Union's Horizon 2020 research and innovation programme under the Marie Sk\l{}odowska-Curie grant agreement No. 675789. 
The second named author was funded by a Senior Professor Position at the Politecnico di Torino and a Brazilian CNPq grant (302298/2014-2). Both authors thank the Department of Mathematics of the Politecnico for providing an appropriate environment for discussions on this work.

\bibliographystyle{elsarticle-num} 
\begin{bibdiv}
\begin{biblist}

\bib{ACHILLES_MANARESI_J_MULT}{article}{
      author={Achilles, R\"udiger},
      author={Manaresi, Mirella},
       title={Multiplicity for ideals of maximal analytic spread and
  intersection theory},
        date={1993},
     journal={J. Math. Kyoto Univ.},
      volume={33},
      number={4},
       pages={1029\ndash 1046},
}

\bib{AN}{article}{
      author={Artin, M.},
      author={Nagata, M.},
       title={Residual intersections in {C}ohen-{M}acaulay rings},
        date={1972},
     journal={J. Math. Kyoto Univ.},
      volume={12},
       pages={307\ndash 323},
}

\bib{EFFECTIVE_BIGRAD}{article}{
      author={Botbol, Nicol\'as},
      author={Bus\'e, Laurent},
      author={Chardin, Marc},
      author={Hassanzadeh, Seyed~Hamid},
      author={Simis, Aron},
      author={Tran, Quang~Hoa},
       title={Effective criteria for bigraded birational maps},
        date={2017},
     journal={J. Symbolic Comput.},
      volume={81},
       pages={69\ndash 87},
}

\bib{Brodmann_Sharp_local_cohom}{book}{
      author={Brodmann, M.~P.},
      author={Sharp, R.~Y.},
       title={Local cohomology.},
     edition={Second},
      series={Cambridge Studies in Advanced Mathematics},
   publisher={Cambridge University Press, Cambridge},
        date={2013},
      volume={136},
        note={An algebraic introduction with geometric applications},
}

\bib{BRUNS_HERZOG}{book}{
      author={Bruns, Winfried},
      author={Herzog, J\"urgen},
       title={Cohen-{M}acaulay rings},
     edition={2},
      series={Cambridge Studies in Advanced Mathematics},
   publisher={Cambridge University Press},
        date={1998},
}

\bib{BUCHSBAUM_EISENBUD_EXACT}{article}{
      author={Buchsbaum, David~A.},
      author={Eisenbud, David},
       title={What makes a complex exact?},
        date={1973},
     journal={J. Algebra},
      volume={25},
       pages={259\ndash 268},
}

\bib{BuchsbaumEisenbud}{article}{
      author={Buchsbaum, David~A.},
      author={Eisenbud, David},
       title={Algebra structures for finite free resolutions, and some
  structure theorems for ideals of codimension {$3$}},
        date={1977},
        ISSN={0002-9327},
     journal={Amer. J. Math.},
      volume={99},
      number={3},
       pages={447\ndash 485},
}

\bib{MULTPROJ}{article}{
      author={{Bus{\'e}}, Laurent},
      author={{Cid-Ruiz}, Yairon},
      author={{D'Andrea}, Carlos},
       title={{Degree and birationality of multi-graded rational maps}},
        date={2018-05},
     journal={ArXiv e-prints},
        note={arXiv:1805.05180},
}

\bib{Laurent_Jouanolou_Closed_Image}{article}{
      author={Bus\'e, Laurent},
      author={Jouanolou, Jean-Pierre},
       title={On the closed image of a rational map and the implicitization
  problem},
        date={2003},
     journal={J. Algebra},
      volume={265},
      number={1},
       pages={312\ndash 357},
}

\bib{CHARDIN_POWERS_IDEALS}{article}{
      author={Chardin, Marc},
       title={Powers of ideals and the cohomology of stalks and fibers of
  morphisms},
        date={2013},
        ISSN={1937-0652},
     journal={Algebra Number Theory},
      volume={7},
      number={1},
       pages={1\ndash 18},
}

\bib{DMOD}{article}{
      author={Cid-Ruiz, Yairon},
       title={A ${D}$-module approach on the equations of the {R}ees algebra},
        date={2017},
     journal={to appear in J. Commut. Algebra},
        note={arXiv:1706.06215},
}

\bib{MULT_SAT_PERF_HT_2}{article}{
      author={Cid-Ruiz, Yairon},
       title={Multiplicity of the saturated special fiber ring of height two
  perfect ideals},
        date={2020},
     journal={Proc. Amer. Math. Soc.},
      volume={148},
      number={1},
       pages={59\ndash 70},
}

\bib{MULT_GOR_HT_3}{article}{
      author={Cid-Ruiz, Yairon},
      author={Mukundan, Vivek},
       title={Multiplicity of the saturated special fiber ring of height three
  {G}orenstein ideals},
        date={2019},
     journal={arXiv preprint arXiv:1909.13633},
}

\bib{AB_INITIO}{article}{
      author={Doria, A.~V.},
      author={Hassanzadeh, S.~H.},
      author={Simis, A.},
       title={A characteristic-free criterion of birationality},
        date={2012},
     journal={Adv. Math.},
      volume={230},
      number={1},
       pages={390\ndash 413},
}

\bib{EISEN_COMM}{book}{
      author={Eisenbud, David},
       title={Commutative algebra with a view towards algebraic geometry},
      series={Graduate Texts in Mathematics, 150},
   publisher={Springer-Verlag},
        date={1995},
}

\bib{EISENBUD_HUNEKE_SPECIALIZATION}{article}{
      author={Eisenbud, David},
      author={Huneke, Craig},
       title={Cohen-{M}acaulay {R}ees algebras and their specialization},
        date={1983},
        ISSN={0021-8693},
     journal={J. Algebra},
      volume={81},
      number={1},
       pages={202\ndash 224},
}

\bib{EISENBUD_ULRICH_ROW_IDEALS}{article}{
      author={Eisenbud, David},
      author={Ulrich, Bernd},
       title={Row ideals and fibers of morphisms},
        date={2008},
     journal={Michigan Math. J.},
      volume={57},
       pages={261\ndash 268},
        note={Special volume in honor of Melvin Hochster},
}

\bib{FLENNER_O_CARROLL_VOGEL}{book}{
      author={Flenner, H.},
      author={O'Carroll, L.},
      author={Vogel, W.},
       title={Joins and intersections},
      series={Springer Monographs in Mathematics},
   publisher={Springer-Verlag, Berlin},
        date={1999},
}

\bib{GORTZ_WEDHORN}{book}{
      author={G\"{o}rtz, Ulrich},
      author={Wedhorn, Torsten},
       title={Algebraic geometry {I}},
      series={Advanced Lectures in Mathematics},
   publisher={Vieweg + Teubner, Wiesbaden},
        date={2010},
        ISBN={978-3-8348-0676-5},
         url={https://doi.org/10.1007/978-3-8348-9722-0},
        note={Schemes with examples and exercises},
}

\bib{MACAULAY2}{misc}{
      author={Grayson, Daniel~R.},
      author={Stillman, Michael~E.},
       title={Macaulay2, a software system for research in algebraic geometry},
        note={Available at \url{http://www.math.uiuc.edu/Macaulay2/}},
}

\bib{Harris}{book}{
      author={Harris, Joe},
       title={Algebraic geometry},
      series={Graduate Texts in Mathematics},
   publisher={Springer-Verlag, New York},
        date={1992},
      volume={133},
        ISBN={0-387-97716-3},
        note={A first course},
}

\bib{HARTSHORNE}{book}{
      author={Hartshorne, Robin},
       title={Algebraic geometry},
   publisher={Springer-Verlag, New York-Heidelberg},
        date={1977},
        note={Graduate Texts in Mathematics, No. 52},
}

\bib{Hassanzadeh_Simis_Cremona_Sat}{article}{
      author={Hassanzadeh, Seyed~Hamid},
      author={Simis, Aron},
       title={Plane {C}remona maps: saturation and regularity of the base
  ideal},
        date={2012},
     journal={J. Algebra},
      volume={371},
       pages={620\ndash 652},
}

\bib{HASSANZADEH_SIMIS_DEGREES}{article}{
      author={Hassanzadeh, Seyed~Hamid},
      author={Simis, Aron},
       title={Bounds on degrees of birational maps with arithmetically
  {C}ohen-{M}acaulay graphs},
        date={2017},
     journal={J. Algebra},
      volume={478},
       pages={220\ndash 236},
}

\bib{HSV_Approx_Complexes_II}{article}{
      author={Herzog, J.},
      author={Simis, A.},
      author={Vasconcelos, W.~V.},
       title={Approximation complexes of blowing-up rings. {II}},
        date={1983},
     journal={J. Algebra},
      volume={82},
      number={1},
       pages={53\ndash 83},
}

\bib{HSV_TRENTO_SCHOOL}{incollection}{
      author={Herzog, J.},
      author={Simis, A.},
      author={Vasconcelos, W.~V.},
       title={Koszul homology and blowing-up rings},
        date={1983},
   booktitle={Commutative algebra ({T}rento, 1981)},
      series={Lecture Notes in Pure and Appl. Math.},
      volume={84},
   publisher={Dekker, New York},
       pages={79\ndash 169},
}

\bib{HULEK_KATZ_SCHREYER_SYZ}{article}{
      author={{Hulek}, Klaus},
      author={{Katz}, Sheldon},
      author={{Schreyer}, Frank-Olaf},
       title={{Cremona transformations and syzygies.}},
        date={1992},
        ISSN={0025-5874; 1432-1823/e},
     journal={{Math. Z.}},
      volume={209},
      number={3},
       pages={419\ndash 443},
}

\bib{Huneke}{article}{
      author={Huneke, Craig},
       title={Strongly {C}ohen-{M}acaulay schemes and residual intersections},
        date={1983},
     journal={Trans. Amer. Math. Soc.},
      volume={277},
      number={2},
       pages={739\ndash 763},
}

\bib{Residual_int}{article}{
      author={Huneke, Craig},
      author={Ulrich, Bernd},
       title={Residual intersections},
        date={1988},
     journal={J. Reine Angew. Math.},
      volume={390},
       pages={1\ndash 20},
}

\bib{Generic_residual_int}{incollection}{
      author={Huneke, Craig},
      author={Ulrich, Bernd},
       title={Generic residual intersections},
        date={1990},
   booktitle={Commutative algebra ({S}alvador, 1988)},
      series={Lecture Notes in Math.},
      volume={1430},
   publisher={Springer, Berlin},
       pages={47\ndash 60},
}

\bib{Hurwitz}{article}{
      author={{Hurwitz}, A.},
       title={{\"Uber die Tr\"agheitsformen eines algebraischen Moduls}},
        date={1913},
        ISSN={0373-3114; 1618-1891/e},
     journal={{Annali di Mat. (3)}},
      volume={20},
       pages={113\ndash 151},
}

\bib{HYRY_MULTIGRAD}{article}{
      author={Hyry, Eero},
       title={The diagonal subring and the {C}ohen-{M}acaulay property of a
  multigraded ring},
        date={1999},
     journal={Trans. Amer. Math. Soc.},
      volume={351},
      number={6},
       pages={2213\ndash 2232},
}

\bib{JMULT_MONOMIAL}{article}{
      author={Jeffries, Jack},
      author={Monta\~no, Jonathan},
       title={The {$j$}-multiplicity of monomial ideals},
        date={2013},
     journal={Math. Res. Lett.},
      volume={20},
      number={4},
       pages={729\ndash 744},
}

\bib{JEFFRIES_MONTANO_VARBARO}{article}{
      author={Jeffries, Jack},
      author={Monta\~no, Jonathan},
      author={Varbaro, Matteo},
       title={Multiplicities of classical varieties},
        date={2015},
     journal={Proc. Lond. Math. Soc. (3)},
      volume={110},
      number={4},
       pages={1033\ndash 1055},
}

\bib{SIMIS_ULRICH_SPECIALIZATION}{incollection}{
      author={Kennedy, Gary},
      author={Simis, Aron},
      author={Ulrich, Bernd},
       title={Specialization of {R}ees algebras with a view to tangent star
  algebras},
        date={1994},
   booktitle={Commutative algebra ({T}rieste, 1992)},
   publisher={World Sci. Publ., River Edge, NJ},
       pages={130\ndash 139},
}

\bib{KPU_blowup_fibers}{article}{
      author={Kustin, Andrew},
      author={Polini, Claudia},
      author={Ulrich, Bernd},
       title={Blowups and fibers of morphisms},
        date={2016},
     journal={Nagoya Math. J.},
      volume={224},
      number={1},
       pages={168\ndash 201},
}

\bib{MATSUMURA}{book}{
      author={Matsumura, Hideyuki},
       title={Commutative ring theory},
     edition={1},
      series={Cambridge Studies in Advanced Mathematics volume 8},
   publisher={Cambridge University Press},
        date={1989},
}

\bib{MUMFORD}{book}{
      author={Mumford, David},
       title={Abelian varieties},
      series={Tata Institute of Fundamental Research Studies in Mathematics,
  No. 5},
   publisher={Published for the Tata Institute of Fundamental Research, Bombay;
  Oxford University Press, London},
        date={1970},
}

\bib{TRUNG_SPECIALIZATION}{article}{
      author={Nhi, Dam~Van},
      author={Trung, Ng\^{o}~Vi\^{e}t},
       title={Specialization of modules},
        date={1999},
     journal={Comm. Algebra},
      volume={27},
      number={6},
       pages={2959\ndash 2978},
}

\bib{COMPUTING_J_MULT}{article}{
      author={Nishida, Koji},
      author={Ulrich, Bernd},
       title={Computing {$j$}-multiplicities},
        date={2010},
        ISSN={0022-4049},
     journal={J. Pure Appl. Algebra},
      volume={214},
      number={12},
       pages={2101\ndash 2110},
}

\bib{SIMIS_PAN_JONQUIERES}{article}{
      author={Pan, Ivan},
      author={Simis, Aron},
       title={Cremona maps of de {J}onqui\`eres type},
        date={2015},
     journal={Canad. J. Math.},
      volume={67},
      number={4},
       pages={923\ndash 941},
}

\bib{POLINI_XIE_J_MULT}{article}{
      author={Polini, Claudia},
      author={Xie, Yu},
       title={{$j$}-multiplicity and depth of associated graded modules},
        date={2013},
     journal={J. Algebra},
      volume={379},
       pages={31\ndash 49},
}

\bib{SIMIS_RUSSO_BIRAT}{article}{
      author={Russo, Francesco},
      author={Simis, Aron},
       title={On birational maps and {J}acobian matrices},
        date={2001},
     journal={Compositio Math.},
      volume={126},
      number={3},
       pages={335\ndash 358},
}

\bib{Simis_cremona}{article}{
      author={Simis, Aron},
       title={Cremona transformations and some related algebras},
        date={2004},
     journal={J. Algebra},
      volume={280},
      number={1},
       pages={162\ndash 179},
}

\bib{ram1}{article}{
      author={Simis, Aron},
      author={Ulrich, Bernd},
      author={Vasconcelos, Wolmer~V.},
       title={Rees algebras of modules},
        date={2003},
     journal={Proc. London Math. Soc. (3)},
      volume={87},
      number={3},
       pages={610\ndash 646},
}

\bib{simis1988krull}{article}{
      author={Simis, Aron},
      author={Vasconcelos, Wolmer~V.},
       title={Krull dimension and integrality of symmetric algebras},
        date={1988},
     journal={manuscripta mathematica},
      volume={61},
      number={1},
       pages={63\ndash 78},
}

\bib{Ulrich_RedNo}{article}{
      author={Ulrich, Bernd},
       title={Ideals having the expected reduction number},
        date={1996},
     journal={Amer. J. Math.},
      volume={118},
      number={1},
       pages={17\ndash 38},
}

\bib{Ulrich_Vasc_Eq_Rees_Lin_Present}{article}{
      author={Ulrich, Bernd},
      author={Vasconcelos, Wolmer~V.},
       title={The equations of {R}ees algebras of ideals with linear
  presentation},
        date={1993},
     journal={Math. Z.},
      volume={214},
      number={1},
       pages={79\ndash 92},
}

\end{biblist}
\end{bibdiv}

\end{document}